\newcommand{\eps}{\varepsilon}
\newtheorem{theorem}{Theorem}[section]
\newtheorem{proposition}[theorem]{Proposition}
\newtheorem{lemma}[theorem]{Lemma}
\newtheorem{remark}[theorem]{Remark}
\theoremstyle{definition}
\numberwithin{equation}{section}
\title[Twisted second moment of Dirichlet $L$-functions]{Breaking the $\frac{1}{2}$-barrier for the twisted second moment of Dirichlet $L$-functions}
\author{H. M. Bui, Kyle Pratt, Nicolas Robles and Alexandru Zaharescu}
\subjclass[2010]{11M06, 11M26.}
\keywords{Dirichlet $L$-functions, twisted second moment, moments, Kloosterman fractions, large sieve inequality.}
\address{School of Mathematics, University of Manchester, Manchester M13 9PL, UK}
\email{hung.bui@manchester.ac.uk}
\address{Department of Mathematics, University of Illinois, 1409 West Green Street, Urbana, IL 61801, USA}
\email{kpratt4@illinois.edu}
\email{nirobles@illinois.edu}
\address{Department of Mathematics, University of Illinois, 1409 West Green Street, Urbana, IL 61801, USA
and Simion Stoilow Institute of Mathematics of the Romanian Academy, P.O. Box 1-764, RO-014700 Bucharest,
Romania}
\email{zaharesc@illinois.edu}
\begin{document} 

\maketitle

\begin{abstract}
We study the second moment of Dirichlet $L$-functions to a large prime modulus $q$ twisted by the square of an arbitrary Dirichlet polynomial. We break the $\frac{1}{2}$-barrier in this problem, and obtain an asymptotic formula provided that the length of the Dirichlet polynomial is less than $q^{51/101} = q^{1/2 +1/202}$. As an application, we obtain an upper bound of the correct order of magnitude for the third moment of Dirichlet $L$-functions. We give further results when the coefficients of the Dirichlet polynomial are more specialized. 
\end{abstract}

\section{Introduction}

We study the mean square of the product of Dirichlet $L$-functions with arbitrary Dirichlet polynomials. The central problem is to obtain an asymptotic formula for
\begin{align}\label{eq: intro main quantity of interest}
\sideset{}{^*}\sum_{\chi (\textrm{mod}\ q)} \left|L \left( \tfrac{1}{2},\chi \right)\right|^2 \bigg|\sum_{a \leq q^\kappa} \frac{\alpha_a \chi(a)}{\sqrt{a}} \bigg|^2,
\end{align}
where $\sum^*$ denotes summation over all primitive characters $\chi$ modulo $q$, the coefficients $\alpha_a \ll a^\varepsilon$ are arbitrary, and $0 < \kappa < 1$. The asymptotic evaluation when $\kappa < 1/2$ was established by Iwaniec and Sarnak [\textbf{\ref{IS}}]. In this regime the main term comes from the ``diagonal'' contribution, and the off-diagonal terms contribute only to the error.

In this work we obtain an asymptotic expression for \eqref{eq: intro main quantity of interest} with $\kappa$ going beyond the $\frac{1}{2}$-barrier. In this larger regime some off-diagonal terms contribute to the main term, and evaluating this contribution, as well as bounding the error terms, is considerably more difficult. Duke, Friedlander and Iwaniec [\textbf{\ref{DFI1}}] proved that the quantity in \eqref{eq: intro main quantity of interest} may be bounded by $O_\eps(q^{1+\varepsilon})$ for some $\kappa > 1/2$, but their proof does not extend to give an asymptotic formula. Very recently, Conrey, Iwaniec and Soundararajan [\textbf{\ref{CIS}}] applied their asymptotic large sieve and studied \eqref{eq: intro main quantity of interest} with an additional averaging over the modulus $q$.

The analogous problem to \eqref{eq: intro main quantity of interest} for the Riemann zeta-function was studied by Bettin, Chandee and Radziwi\l\l \ [\textbf{\ref{BCR}}], who broke the $\frac{1}{2}$-barrier for an arbitrary Dirichlet polynomial. Our work is inspired by their beautiful paper, but there are significant differences between the family of primitive Dirichlet $L$-functions in the $q$-aspect and the Riemann zeta-function in the $t$-aspect. These differences usually make the family of Dirichlet $L$-functions more difficult to work with. In fact, Bettin, Chandee and Radziwi\l\l \ mentioned the work of Duke, Friedlander and Iwaniec [\textbf{\ref{DFI1}}], and said: ``Our proof of Theorem 1 would not extend to give an asymptotic in this case, and additional input is needed.''

It turns out that it is more convenient to work with a more general version of \eqref{eq: intro main quantity of interest} by introducing ``shifts''. We let $\alpha,\beta \in \mathbb{C}$ satisfy $\text{Re}(\alpha),\text{Re}(\beta) \ll (\log q)^{-1}$ and, for our later application\footnote{See Lemma \ref{lemmathird}}, $\text{Im}(\alpha),\text{Im}(\beta) \ll \log q$. Furthermore, we treat the characters $\chi$ according to their parity to ensure that the $L$-functions under consideration have the same gamma factors in their functional equations. Let $\varphi^+(q)$ be the number of even primitive characters $\chi$ modulo $q$, and let $\sum^+$ denote summation over all these characters. In this work we deal exclusively with even Dirichlet characters, but our arguments go through identically for odd characters. For fixed $0 < \kappa < 1$, we study
\begin{equation*}
I_{\alpha,\beta}=\frac{1}{\varphi^+(q)}\ \sideset{}{^+}\sum_{\chi(\textrm{mod}\ q)} L(\tfrac 12+\alpha,\chi)L(\tfrac12+\beta,\overline{\chi})\big|A(\chi)\big|^2,
\end{equation*}
where
\begin{align*}
A(\chi) &=\sum_{a\leq q^\kappa}\frac{\alpha_a\chi(a)}{\sqrt{a}},
\end{align*}
and $\alpha_a$ is an arbitrary sequence of complex numbers satisfying $\alpha_a \ll_\varepsilon a^\varepsilon$.

For technical convenience we assume that $q$ is prime throughout the paper. Note that in this case, the number of primitive characters is $\varphi^*(q)=q-2$, and we have $\varphi^+(q)=(q-3)/2$. It is likely that our methods could be adapted to the case of general $q$ with more effort.

The following is our main theorem.
\begin{theorem}\label{mthm}
Suppose that $q$ is prime. Let $\alpha,\beta \in \mathbb{C}$ satisfy 
\begin{align*}
|\textup{Re}(\alpha)|,|\textup{Re}(\beta)| \ll (\log q)^{-1}\qquad\emph{and}\qquad|\textup{Im}(\alpha)|,|\textup{Im}(\beta)| \ll \log q.
\end{align*}
Suppose that $\kappa< 1/2 + 1/202$. Then
\begin{align}\label{mainfml}
\!\!I_{\alpha,\beta}&=\zeta(1+\alpha+\beta)\sum_{\substack{da,db\leq q^\kappa\\(a,b)=1}}\frac{\alpha_{da}\overline{\alpha_{db}}}{da^{1+\beta}b^{1+\alpha}}\nonumber\\
&\qquad+\Big(\frac{q}{\pi}\Big)^{-(\alpha+\beta)}\frac{\Gamma(\frac{1/2- \alpha}{2})\Gamma(\frac{1/2-\beta}{2})}{\Gamma(\frac{1/2+\alpha}{2})\Gamma(\frac{1/2+\beta}{2})}\zeta(1-\alpha-\beta)\sum_{\substack{da,db\leq q^\kappa\\(a,b)=1}}\frac{\alpha_{da}\overline{\alpha_{db}}}{da^{1-\alpha}b^{1-\beta}}+O(q^{-\delta_0})
\end{align}
for some $\delta_0>0$.
\end{theorem}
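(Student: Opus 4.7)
The plan is to adapt the strategy of Bettin, Chandee and Radziwi\l\l \ for the Riemann zeta-function to the family of primitive Dirichlet characters modulo the prime $q$, and to supply the additional input needed to handle the more delicate $q$-aspect off-diagonal terms. First, I would insert an approximate functional equation for the product $L(\tfrac12+\alpha,\chi)L(\tfrac12+\beta,\overline\chi)$, writing it as the sum of two ``halves'', the second being the dual of the first with $(\alpha,\beta)$ replaced by $(-\alpha,-\beta)$ and multiplied by the ratio of gamma factors appearing in \eqref{mainfml}. Expanding $|A(\chi)|^2 = \sum_{a,b} \alpha_a \overline\alpha_b \chi(a)\overline\chi(b)/\sqrt{ab}$ and appealing to the orthogonality of even primitive characters modulo the prime $q$ reduces the problem to sums over quadruples $(a,b,m,n)$ with $(abmn,q)=1$ and $am\equiv \pm bn\pmod q$, weighted by smooth test functions coming from a cutoff $V_{\alpha,\beta}(mn/q^2)$.

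The diagonal contribution $am=bn$ is handled by setting $d=(a,b)$, $a=dA$, $b=dB$ with $(A,B)=1$, and noting that the condition forces $m=Bk$, $n=Ak$. The inner sum over $k$ produces a factor of $\zeta(1+\alpha+\beta)$ after a standard Mellin-Barnes contour shift past the pole at $s=0$, yielding the first main term in \eqref{mainfml}. Symmetrically, the dual half of the approximate functional equation contributes the second main term with shifts swapped and the ratio of gamma factors.

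The core of the proof is the off-diagonal regime $am\neq bn$, in which $n\equiv \overline{b}\,am\pmod q$ with $am-bn=hq$ for some $h\neq 0$. I would apply Poisson summation to the $n$-variable along this residue class; the $h=0$ frequency combines cleanly with the diagonal to complete the main terms, while the $h\neq 0$ frequencies produce bilinear/trilinear expressions of the shape
\[
\sum_{a,b}\sum_{m,h} \frac{\alpha_a \overline\alpha_b}{\sqrt{ab}}\,W(a,b,m,h;q)\, e\!\left(\frac{h\overline{b}\,am}{q}\right),
\]
where $W$ is a smooth weight and $\overline b$ denotes the inverse of $b$ modulo $q$. When $\kappa<1/2$ trivial estimates suffice, but beyond the barrier the additional length of the Dirichlet polynomial forces us to extract genuine cancellation from the Kloosterman-fraction phases.

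The principal obstacle lies in obtaining this cancellation. For this I would deploy bilinear and trilinear form estimates for Kloosterman fractions in the spirit of Duke, Friedlander and Iwaniec and of Bettin and Chandee, together with the Deshouillers-Iwaniec form of the spectral large sieve coming from the Kuznetsov trace formula. After dyadic decomposition of $a,b,m,h$ and separation of variables via Mellin transforms, each dyadic block must save a uniform factor $q^{\delta}$ over the trivial bound; balancing the losses in these estimates against the length $q^\kappa$ of the Dirichlet polynomial is what pins down the threshold $\kappa<\tfrac12+\tfrac{1}{202}$. Controlling the worst ranges --- where $a$ or $b$ is close to $q^\kappa$ and $m$ is close to $q/b$, so that the Poisson-dual length and the original length are comparable --- is the technically hardest part of the argument, and is precisely the ``additional input'' in the $q$-aspect anticipated by Bettin, Chandee and Radziwi\l\l.
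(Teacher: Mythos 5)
The central gap is in your treatment of the off-diagonal terms beyond the $\tfrac12$-barrier. After Poisson summation in $n$ modulo $q$ you arrive at phases $e(\pm ahm\overline b/q)$ whose denominator is frozen at $q$; the bilinear/trilinear estimates of Duke--Friedlander--Iwaniec and Bettin--Chandee bound sums of the shape $\sum_{a,b}\alpha_a\beta_b\, e(\theta\overline a/b)$ in which the denominator $b$ of the Kloosterman fraction itself varies over a range, and they cannot be applied with the modulus fixed at $q$ and all of $a,b,m,h$ of length at most $q^{\kappa+\eps}$. The paper resolves this by splitting into two regimes with two different tools. In the balanced regime ($AM$ and $BN$ of comparable size) one writes $am\mp bn=qr$, converts the congruence modulo $q$ into one modulo $b$, and Poisson summation in $m$ produces fractions $e(-qrg\overline a/b)$ with varying denominators $b\asymp B\leq q^{\kappa}$, to which the Bettin--Chandee trilinear bound applies. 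In the unbalanced regime those estimates are useless; there the paper does apply Poisson modulo $q$ as you propose, but bounds the nonzero frequencies by a bespoke additive large sieve argument, sorting the points $m\overline b/q$ into well-spaced (``good'') and clustered (``bad'') fractions, counting the bad pairs by an elementary congruence/divisor argument, and applying the large sieve separately to each class. Your appeal to the Deshouillers--Iwaniec spectral large sieve is not a substitute: no Kloosterman sums averaged over moduli arise in the main theorem, and your outline offers no mechanism producing a power saving in the critical range $A\asymp B\asymp N\asymp q^{\kappa}$, $M\asymp q^{2-3\kappa}$, which is precisely what determines the exponent $1/202$.

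The main-term bookkeeping is also not correct as stated. Orthogonality over even primitive characters carries a correction from the principal character, and in the unbalanced regime the zero Poisson frequency is a genuinely large quantity that does not ``complete the diagonal'': it cancels against that principal-character correction. More importantly, the second main term in \eqref{mainfml} is not produced by the diagonal of the dual half of the approximate functional equation alone. When $ab$ can exceed $q$ (which happens as soon as $\kappa>1/2$), the corresponding contour integral is no longer asymptotic to its residue at $s=0$; one must add the secondary main terms coming from the zero frequency of the balanced-regime Poisson summation (the terms $\mathcal M_1^{\pm}$), evaluate their $r$-sum as a zeta factor by contour shifts, and only the combination of the two contour integrals yields the closed form $X_-(0)\zeta(1-\alpha-\beta)\sum \alpha_{da}\overline{\alpha_{db}}/(da^{1-\alpha}b^{1-\beta})$. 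Treating the off-diagonal purely as an error to be beaten by $q^{\delta}$, with a single zero frequency folded into the diagonal, therefore cannot give the asymptotic formula for any $\kappa>1/2$; at best it would recover an upper bound of the Duke--Friedlander--Iwaniec type.
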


The use of Theorem 2 of [\textbf{\ref{DFI2}}] and our Proposition \ref{prop2} below also suffices to break the $\frac12$-barrier. With those results, Theorem \ref{mthm} holds provided that $\kappa<1/2+1/526$.

%As an application of Theorem \ref{mthm}, we obtain the order of magnitude of the third moment of Dirichlet $L$-functions.
%\begin{theorem}\label{thm: third moment}
%Suppose that $q$ is prime. Then
%\begin{align}\label{eq: third moment}
%\frac{1}{\varphi^*(q)}\ \sideset{}{^*}\sum_{\chi(\emph{mod}\ q)} \left|L \left( \tfrac{1}{2},\chi\right) \right|^3 \asymp (\log q)^{9/4}.
%\end{align}
%\end{theorem}
%Previously, Theorem \ref{thm: third moment} was known only under the assumption of the Generalized Riemann Hypothesis [\textbf{\ref{hb}}].

The range of $\kappa$ can be enlarged if we know more about the Dirichlet polynomial $A(\chi)$. Let $\gamma$ be a smooth function supported in $[1,2]$ such that $\gamma^{(j)}\ll_j q^\eps$ for any fixed $j\geq0$. Suppose that $\alpha=\eta*\lambda$, where $\eta_{a_1},\lambda_{a_2}$ are two sequences of complex numbers supported on $[1,A_1]$ and $[1,A_2]$, respectively, with $A_1=q^{\kappa_1}$, $A_2=q^{\kappa_2}$ and $\kappa=\kappa_1+\kappa_2$, and satisfy $\eta_a,\lambda_a\ll a^\eps$. Friedlander and Iwaniec [\textbf{\ref{FI}}] showed that
\begin{align*}
\frac{1}{\varphi^*(q)}\ \sideset{}{^*}\sum_{\chi(\text{mod}\ q)}\bigg|\sum_{n}\frac{\chi(n)}{\sqrt{n}}\gamma\Big(\frac nN\Big)\bigg|^2\big|A(\chi)\big|^2 \ll_\eps q^\eps+q^{-3/4+5\kappa/4+\eps}\big(A_1+A_2\big)^{1/4},
\end{align*}
if $N\ll q^{1/2+\eps}$. When $A_1\asymp A_2$, their result gives an upper bound of $O_\eps(q^\varepsilon)$ provided that $\kappa<1/2+1/22$.

\begin{theorem}\label{thm2}
Assume as above and suppose further that $9\kappa+\max\{\kappa_1,\kappa_2\}<5$. Then \eqref{mainfml} holds
for some $\delta_0>0$.
\end{theorem}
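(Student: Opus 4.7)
The plan is to re-run the proof of Theorem \ref{mthm} up to the point where the problem reduces to estimating a bilinear form in Kloosterman fractions, and at that step to replace Proposition \ref{prop2} by a stronger bilinear estimate that exploits the factorization $\alpha = \eta * \lambda$. Applying the approximate functional equation to $L(\tfrac12+\alpha,\chi)L(\tfrac12+\beta,\overline\chi)$, expanding $|A(\chi)|^2$, and invoking orthogonality of the even primitive characters modulo $q$ decomposes $I_{\alpha,\beta}$ into a diagonal contribution, which produces the two main terms in \eqref{mainfml}, and an off-diagonal contribution involving additive characters of the form $e(\overline{a}bmn/q)$ weighted by $\alpha_a\overline{\alpha_b}$ and by slowly varying functions coming from the AFE cutoffs together with the smooth cutoff $\gamma$.

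Under the hypothesis $\alpha = \eta * \lambda$, each outer variable splits as $a = a_1 a_2$ with $a_i \leq A_i = q^{\kappa_i}$, and similarly $b = b_1 b_2$, so the relevant bilinear form is promoted to a quadrilinear sum in $(a_1,a_2,b_1,b_2)$. Grouping the shorter pair (say $a_2,b_2$ if $\kappa_2\leq\kappa_1$) on the inside and applying Cauchy--Schwarz in the longer pair reduces the off-diagonal to a bilinear estimate for sums of Kloosterman fractions in which one coordinate has length $\max(A_1,A_2)=q^{\max(\kappa_1,\kappa_2)}$ and the other has total length $q^{\kappa}$. Feeding a Deshouillers--Iwaniec / Duke--Friedlander--Iwaniec style asymmetric bilinear Kloosterman estimate into this step in place of Proposition \ref{prop2} yields a saving that is strong enough to produce an acceptable error $O(q^{-\delta_0})$ precisely when $9\kappa + \max(\kappa_1,\kappa_2) < 5$, which is the hypothesis of Theorem \ref{thm2}. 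The extraction of the dual main term from the second piece of the AFE, together with the remaining bookkeeping (shifts, gamma factors, Mellin transforms of the AFE weights), is unchanged from the proof of Theorem \ref{mthm}.

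The main obstacle is obtaining the sharp asymmetric bilinear Kloosterman estimate with the correct dependence on the two coordinate lengths: the pointwise Weil bound is blind to the asymmetry and loses exactly what one needs to cross the Iwaniec--Sarnak threshold, so one must appeal either to spectral input via Kuznetsov's formula (mean-value bounds for sums of Kloosterman sums attached to $\Gamma_0(q)$) or to an iterated completion-plus-Weil argument. In either route, the delicate point is handling the coprimality and congruence conditions inherited from the orthogonality relation on primitive characters mod $q$, together with the smooth dyadic decompositions of the $m,n$ sums that come from the AFE, in such a way that the extra averaging gained from splitting $a = a_1 a_2$ is not spent absorbing secondary loss factors. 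It is at this stage that the specific exponents in $9\kappa + \max(\kappa_1,\kappa_2) < 5$ are forced.
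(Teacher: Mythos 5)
Your high-level plan (rerun the proof of Theorem \ref{mthm} and upgrade the Kloosterman-fraction input by exploiting $\alpha=\eta*\lambda$) points in the right direction, but it misidentifies which step gets replaced, and this is not a cosmetic error. You propose to substitute your asymmetric bilinear estimate ``in place of Proposition \ref{prop2}''. Proposition \ref{prop2} is the treatment of the \emph{unbalanced} regime ($(M,N)\in\mathcal{A}_{2,\lessgtr}$), where estimates for sums of Kloosterman fractions are not applicable at all and, more importantly, where the off-diagonal terms are \emph{not} an error: the zero frequency after Poisson summation produces a secondary main term of genuine size which must be extracted and later cancelled against the contribution of the principal character (the term $-\tfrac{2}{q}\sum\cdots$ coming from orthogonality). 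No bilinear bound, spectral or elementary, can make this contribution $O(q^{-\delta_0})$; it has to be kept and combined as in Subsections 4.3--4.5. The correct move, and the one the paper makes, is to keep Proposition \ref{prop2} untouched and instead replace the balanced-regime error estimate --- the Bettin--Chandee trilinear Kloosterman-fractions bound feeding into \eqref{bdE1} of Proposition \ref{prop1} --- by Proposition \ref{prop3}, whose proof uses the factorization $\alpha=\eta*\lambda$ inside $Z_d(x)$ (so the phase becomes $e(-qrg\overline{a_1a_2}/b)$) and then applies Lemma 6 of Heath-Brown--Jia, which is essentially the Friedlander--Iwaniec mean-value theorem, with $U\leftrightarrow B/d$, $K\leftrightarrow RG/d^2$, $S\leftrightarrow A_1/d_1$, $T\leftrightarrow A_2/d_2$. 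No new ``asymmetric bilinear Kloosterman estimate'' (via Kuznetsov or otherwise) needs to be invented.

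A second gap is that the threshold $9\kappa+\max\{\kappa_1,\kappa_2\}<5$ is asserted rather than derived: you say the saving is sufficient ``precisely when'' the hypothesis holds, but without fixing the exact shape of the bilinear/trilinear input one cannot check any exponent, and different inputs (Weil after completion, spectral bounds, Friedlander--Iwaniec) give different thresholds. In the paper the condition emerges from a concrete computation: in the remaining range $(M,N)\in\mathcal{A}_3$ one may assume $M,N\ll q^{\kappa+6\delta_0}$ and $AM\ll BN$, the constraint $ABMN\gg q^{2-2\delta_0}$ forces $N/M\ll q^{4\kappa-2+14\delta_0}$, and inserting these into the bound of Proposition \ref{prop3} gives $\mathcal{E}(M,N)\ll_\eps q^{-2+15\kappa/4+7\delta_0+\eps}+q^{-5/4+9\kappa/4+7\delta_0/2+\eps}(A_1+A_2)^{1/4}$, which is $O(q^{-\delta_0+\eps})$ exactly under $9\kappa+\max\{\kappa_1,\kappa_2\}<5$. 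Your Cauchy--Schwarz-in-the-longer-pair reduction is in the spirit of how such trilinear bounds are proved, but as stated it neither reproduces this bound nor justifies the stated numerical condition, so the proposal as written does not establish Theorem \ref{thm2}.
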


If $\kappa_1=\kappa_2$, then Theorem \ref{thm2} allows us to take $\kappa<1/2+1/38$.

Another case of special interest is when $\alpha=\eta*\lambda$ with $\eta_a$ being smooth coefficients up to $q^{1/2+\eps}$ and $\lambda_a$ being arbitrary and as long as possible. This can be viewed as an analogue of Hough' result [\textbf{\ref{H}}; Theorem 4] (see also [\textbf{\ref{Z}}; Theorem 1.1]), which gives an asymptotic formula for the fourth moment of Dirichlet $L$-functions twisted by the square of a Dirichlet polynomial of length less than $q^{1/32}$. 

Suppose that 
\[
\eta_{a_1}=\eta\Big(\frac{a_1}{A_1}\Big),
\] 
where $\eta$ is a smooth function supported in $[1,2]$ such that $\eta^{(j)}\ll_j q^{\eps}$ for any fixed $j\geq0$. If $N,A_1\ll q^{1/2+\eps}$, then Watt [\textbf{\ref{W}}] proved that
\begin{align*}
\frac{1}{\varphi^*(q)}\ \sideset{}{^*}\sum_{\chi(\text{mod}\ q)}\bigg|\sum_{n}\frac{\chi(n)}{\sqrt{n}}\gamma\Big(\frac nN\Big)\bigg|^2\big|A(\chi)\big|^2 \ll_\eps q^\eps+q^{\vartheta-1/2+\eps}A_2^2,
\end{align*}
where $\vartheta=7/64$. This yields an upper bound of $O_\eps(q^\varepsilon)$ provided that $\kappa_2<1/4-\vartheta/2$.

\begin{theorem}\label{thm3}
Assume as above and suppose further that $\kappa_2<1/14-\vartheta/7$ with $\vartheta=7/64$. Then \eqref{mainfml} holds
for some $\delta_0>0$.
\end{theorem}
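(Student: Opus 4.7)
My plan is to follow the framework established in the proof of Theorem \ref{mthm}, modifying only the step where the Dirichlet-polynomial structure is exploited. First, I would apply an approximate functional equation to $L(\tfrac12+\alpha,\chi)L(\tfrac12+\beta,\overline{\chi})$, reducing $I_{\alpha,\beta}$ to sums of the shape $\sum^+_\chi\sum_{h,k}\chi(h)\overline{\chi}(k)h^{-1/2-\alpha}k^{-1/2-\beta}V(hk/q)|A(\chi)|^2$ with smooth weights. After opening $|A(\chi)|^2$ and applying character orthogonality modulo the prime $q$, the diagonal contribution (from $ah=bk$) reproduces the main terms of \eqref{mainfml} by the same computation as in Theorem \ref{mthm}, and the problem reduces to bounding the off-diagonal contribution by $O(q^{-\delta_0})$.

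To handle the off-diagonal, I would use the assumption $\alpha=\eta*\lambda$ to factor $A(\chi)=E(\chi)\Lambda(\chi)$ where
\[
E(\chi)=\sum_{a_1}\frac{\eta(a_1/A_1)\chi(a_1)}{\sqrt{a_1}},\qquad\Lambda(\chi)=\sum_{a_2\leq A_2}\frac{\lambda_{a_2}\chi(a_2)}{\sqrt{a_2}}.
\]
Because $\eta$ is smooth and $A_1\leq q^{1/2+\eps}$, I would merge $E(\chi)$ (via Mellin inversion and a contour shift) with the short Dirichlet series coming from the approximate functional equation into a single smooth sum $\sum_n\gamma_0(n/N)\chi(n)/\sqrt{n}$ of length $N\ll q^{1/2+\eps}$ whose weight satisfies $\gamma_0^{(j)}\ll_j q^\eps$. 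Wherever the proof of Theorem \ref{mthm} invokes a large-sieve type bound for a twisted second moment of $L(\tfrac12,\chi)|A(\chi)|^2$, I would substitute Watt's theorem [\textbf{\ref{W}}] applied to this merged smooth sum together with the remaining factor $\Lambda(\chi)$, producing a bound of size $q^\eps + q^{\vartheta-1/2+2\kappa_2+\eps}$ on the relevant averaged moment.

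The principal obstacle will be the bookkeeping needed to keep $E(\chi)$ grouped with the $L$-values throughout all the intermediate manipulations (approximate functional equation, Poisson summation in the off-diagonal, the Kloosterman/Kuznetsov analysis, and the duality steps) so that Watt's hypotheses $N,A_1\ll q^{1/2+\eps}$ remain valid at the moment his bound is finally invoked; in particular, any decomposition used in the main theorem that splits $A(\chi)$ would have to be adapted to respect the $E\cdot\Lambda$ factorization. Once this is arranged, the quantitative condition $\kappa_2<1/14-\vartheta/7$ should emerge from balancing Watt's saving $q^{\vartheta-1/2+2\kappa_2+\eps}$ against the $q^{c(\kappa_1+\kappa_2)}$ losses incurred elsewhere in the off-diagonal analysis of Theorem \ref{mthm}, subject to the standing requirement that $\kappa=\kappa_1+\kappa_2$ exceed $1/2$ for the theorem to be of interest.
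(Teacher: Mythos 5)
There is a genuine gap, and it is structural. Your reduction ``the diagonal reproduces the main terms of \eqref{mainfml} and the problem reduces to bounding the off-diagonal contribution by $O(q^{-\delta_0})$'' is false in the regime $\kappa>1/2$ that Theorem \ref{thm3} addresses. The second main term of \eqref{mainfml} is \emph{not} produced by the diagonal alone: it arises from the off-diagonal terms, whose secondary main terms $\mathcal{M}_1^\pm(M,N)$ and $\mathcal{M}_{2,\gtrless}(M,N)$ (extracted in the balanced regime via Poisson, and in the unbalanced regime $(M,N)\in\mathcal{A}_{2,\gtrless}$ via Proposition \ref{prop2} and the good/bad-fraction large sieve argument) cancel against the principal-character contribution and combine with $\mathcal{M}^{\mp}$ through the residue computation of $\mathcal{N}^{\pm}_{\alpha,\beta}$. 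In particular the off-diagonal contribution is of size comparable to the main terms, not $O(q^{-\delta_0})$; a strategy that only upper-bounds it can at best recover Watt's bound $I_{\alpha,\beta}\ll_\eps q^{\eps}$ (valid for $\kappa_2<1/4-\vartheta/2$), never the asymptotic formula, which is exactly the distinction Theorem \ref{thm3} is about.

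The way Watt's work enters is also misplaced. The proof of Theorem \ref{mthm} never invokes a ``large-sieve type bound for a twisted second moment of $L(\tfrac12,\chi)|A(\chi)|^2$,'' so there is no slot into which Watt's moment theorem can be substituted; what has to be replaced is the balanced-regime exponential-sum estimate of Proposition \ref{prop1} (the Bettin--Chandee input) by a new estimate, namely Proposition \ref{prop4}. There, after writing $am\mp bn=qr$, eliminating $n$ and applying Poisson in $m$, one faces sums with phases $e(-qrg\overline{a}/b)$; the smoothness of $\eta$ (and of the smooth part $\nu$ of $\beta=\overline{\alpha}$) is exploited by a further Poisson summation in $a_1$, converting the phases into Kloosterman sums $S(qrg\overline{a_2},k;b_1b_2)$, with the $k=0$ frequency bounded by $(qrg,b_1b_2)$ and the $k\neq0$ frequencies bounded by Watt's Lemma 4.1 on averages of Kloosterman sums (this is where $\vartheta=7/64$ enters), not by his moment theorem. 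Your plan to merge $E(\chi)$ with the approximate-functional-equation sum also threatens Watt's hypothesis: the AFE variables run up to length $q^{1+\eps}$, so the merged smooth sum need not have length $\ll q^{1/2+\eps}$. Finally, the saving $q^{\vartheta-1/2+2\kappa_2+\eps}$ you aim for corresponds precisely to the upper-bound threshold $\kappa_2<1/4-\vartheta/2$ already quoted before Theorem \ref{thm3}; the hypothesis $\kappa_2<1/14-\vartheta/7$ comes from the error bound $q^{\vartheta-1/2+7\kappa_2+O(\delta_0)+\eps}$ in the critical range $A\asymp B\asymp N\asymp q^{\kappa}$, $M\asymp q^{2-3\kappa}$, and nothing in your sketch produces the exponent $7\kappa_2$.
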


As an application of Theorem \ref{mthm}, we obtain the order of magnitude of the third moment of Dirichlet $L$-functions.
\begin{theorem}\label{thm: third moment}
Suppose that $q$ is prime. Then
\begin{align}\label{eq: third moment}
\frac{1}{\varphi^*(q)}\ \sideset{}{^*}\sum_{\chi(\emph{mod}\ q)} \left|L \left( \tfrac{1}{2},\chi\right) \right|^3 \asymp (\log q)^{9/4}.
\end{align}
\end{theorem}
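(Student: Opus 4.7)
The target exponent $9/4=(3/2)^2$ is precisely the conjectural order of the $k=3/2$ moment in this family, so the natural strategy is to apply Theorem \ref{mthm} with a Dirichlet polynomial $A(\chi)$ that approximates $L(\tfrac12,\chi)^{1/2}$. For a parameter $0<\theta<1/2+1/202$ and a smooth cutoff $W$, take
\[
A(\chi)=\sum_{a\leq q^\theta}\frac{d_{1/2}(a)\,W(a/q^\theta)\,\chi(a)}{\sqrt a},
\]
where $d_{1/2}$ is the multiplicative function defined by $\sum_n d_{1/2}(n)n^{-s}=\zeta(s)^{1/2}$. The heuristic is that $L\cdot A$ behaves like $L^{3/2}$, so $|LA|^2=|L|^2|A|^2$ is a proxy for $|L|^3$. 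With this choice, the two main terms in \eqref{mainfml}, in the limit $\alpha,\beta\to 0$ (where the pole of $\zeta(1+\alpha+\beta)$ cancels against the pole of $\zeta(1-\alpha-\beta)$ carried by the second term after a contour-shift argument — this is exactly the role of Lemma \ref{lemmathird} and the technical requirement $|\textup{Im}(\alpha)|,|\textup{Im}(\beta)|\ll\log q$), combine to give an asymptotic of the form
\[
\sum\nolimits^{+}|L(\tfrac12,\chi)|^2|A(\chi)|^2 \sim C_1(\log q)^{9/4}\varphi^+(q)
\]
for some positive constant $C_1$.

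\textbf{Lower bound.} Hölder's inequality with exponents $(3/2,3)$ gives
\[
\sum\nolimits^{+}|L|^2|A|^2 \leq \bigg(\sum\nolimits^{+}|L|^3\bigg)^{2/3}\bigg(\sum\nolimits^{+}|A|^6\bigg)^{1/3}.
\]
Choosing $\theta<1/3$ ensures that $A(\chi)^3$ is a Dirichlet polynomial of length less than $q$, so the orthogonality of primitive characters modulo the prime $q$ yields, up to negligible errors from the non-primitive correction,
\[
\sum\nolimits^{+}|A|^6 = \sum\nolimits^{+}|A^3|^2 \asymp \varphi^+(q)\sum_{m\leq q^{3\theta}}\frac{d_{3/2}(m)^2}{m}\asymp (\log q)^{9/4}\varphi^+(q),
\]
using that $d_{1/2}*d_{1/2}*d_{1/2}=d_{3/2}$ and $\sum_{m\leq x}d_{3/2}(m)^2/m\asymp(\log x)^{9/4}$. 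Rearranging the Hölder inequality yields the lower bound $\sum^+|L|^3 \gg (\log q)^{9/4}\varphi^+(q)$.

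\textbf{Upper bound.} The naive Cauchy--Schwarz bound
\[
\sum\nolimits^+|L|^3\leq\bigg(\sum\nolimits^+|L|^2\bigg)^{1/2}\bigg(\sum\nolimits^+|L|^4\bigg)^{1/2}
\]
gives only the weaker $(\log q)^{5/2}\varphi^+(q)$, so a more refined argument is required. I would follow the Heap--Radziwill--Soundararajan strategy, adapted to the $q$-aspect family: construct $A(\chi)$ as a product of short truncated Dirichlet polynomials on disjoint intervals of primes approximating $\exp(\tfrac12\sum_p \chi(p)/\sqrt p)$, and split the primitive characters into a typical set on which $|A(\chi)|^2$ dominates $|L(\tfrac12,\chi)|$, and an exceptional set of very small density. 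On the typical set, $|L|^3\ll|L|^2|A|^2$ and Theorem \ref{mthm} gives the desired $(\log q)^{9/4}\varphi^+(q)$ bound. On the exceptional set, one combines Cauchy--Schwarz, the fourth moment bound $\sum^+|L|^4\ll(\log q)^4\varphi^+(q)$ of Heath-Brown, and a Chebyshev-type estimate for the measure of the set (obtained from a high moment of $\log|L|-2\textup{Re}\log A$, itself controllable via further applications of Theorem \ref{mthm}) to show that the contribution is $o((\log q)^{9/4}\varphi^+(q))$.

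\textbf{Main obstacle.} The upper bound is the technically hardest step, and specifically the quantitative control of the exceptional set. Moreover, for the Heap--Radziwill--Soundararajan approximant of $L(\tfrac12,\chi)^{1/2}$ to give the sharp exponent one needs to take the mollifier length essentially above $q^{1/2}$; without going beyond the $\tfrac12$-barrier the argument would at best recover the inferior exponent $5/2$. Thus Theorem \ref{mthm} in its full strength (length $q^{1/2+1/202}$) is not a luxury but is exactly what the third moment application demands.
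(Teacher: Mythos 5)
Your lower bound is fine: it is essentially the Rudnick--Soundararajan argument that the paper simply cites, and note that it does not even need Theorem \ref{mthm}, since a twist of length $q^{\theta}$ with $3\theta<1$ is already covered by Iwaniec--Sarnak. The upper bound, however, has a genuine gap at its central step. You define the typical set via the sizes of prime Dirichlet polynomials and then assert that on this set $|L(\tfrac12,\chi)|^3\ll |L(\tfrac12,\chi)|^2|A(\chi)|^2$, i.e.\ $|L(\tfrac12,\chi)|\ll|A(\chi)|^2$ pointwise. Unconditionally there is no such implication: the statement that $|L(\tfrac12,\chi)|$ is controlled by the exponential of a short prime sum is exactly Soundararajan's lemma, which requires GRH. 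Declaring instead that the exceptional set is $\{\chi: |L|>|A|^2\}$ does not help, because you then have no Chebyshev-type bound on its size without the very comparison you are trying to establish. The unconditional versions of this strategy (Radziwi\l\l--Soundararajan, Heap--Radziwi\l\l--Soundararajan, and the paper's own remark pointing to an adaptation of [\textbf{\ref{RS1}}] and [\textbf{\ref{H}}]) avoid the pointwise inequality by H\"older-interpolating on the good set between a \emph{twisted second} moment and a \emph{twisted fourth} moment with short Dirichlet polynomial weights; in the $q$-aspect the latter input is a theorem of Hough/Zacharias, which you never invoke. As written, your typical-set step fails and the plain fourth moment plus Chebyshev cannot replace it.

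Your closing claim about why Theorem \ref{mthm} is needed is also off the mark, and it is worth contrasting with what the paper actually does. In the Radziwi\l\l--Soundararajan-type route the auxiliary polynomials are short (length $q^{o(1)}$), so breaking the $\tfrac12$-barrier is irrelevant there. The paper instead follows Heath-Brown and Bettin--Chandee--Radziwi\l\l: one bounds $|L(\tfrac12,\chi)|^3$ by an average over a small disk with the weight $W_\rho$, reduces to the quantity $J(A/\log q)$, and then dominates one factor of $|L(s,\chi)|$ by a smoothed Dirichlet polynomial $\sum_n \chi(n)n^{-s}G(\log n/\log x)$ plus an error involving $\int|L(\tfrac12+it+iv,\chi)|\,dv$. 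Since $G$ equals $1$ on $[0,\theta-\delta]$, the coefficients of that polynomial coincide with $(d_{1/2}\mathds{1}_{\le x})\ast(d_{1/2}\mathds{1}_{\le x})$, so its square root structure feeds directly into Theorem \ref{mthm} (this is Lemma \ref{lemmathird}, giving $q(\log q)^{9/4}$), while the error term is reabsorbed into $\tfrac12 J(A/\log q)$ by a bootstrap after choosing $\rho$ small and $A$ large. The reason length beyond $q^{1/2}$ is indispensable in that argument is that the bootstrap only closes when the region where $G\equiv 1$ extends slightly past $q^{1/2}$ (the factor $(yx^{-\delta})^{1/2-\sigma}$ with $y=q^{1/2+2\delta}$), forcing the twisted second moment to handle squares of $d_{1/2}$-polynomials of length just over $q^{1/2}$ --- not because a Heap--Radziwi\l\l--Soundararajan approximant would need such a length.
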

We remark that it is also possible to obtain upper bounds on all moments below the fourth by adapting the work of Radziwi\l\l\, and Soundararajan [\textbf{\ref{RS1}}] and Hough [\textbf{\ref{H}}].

An important application of the twisted second moment of Dirichlet $L$-functions is in regard to non-vanishing of Dirichlet $L$-functions at the central point $s=1/2$. It is widely believed that $L(1/2,\chi)\ne 0$ for all primitive characters $\chi$. At least $34\%$ of Dirichlet $L$-functions in the family of primitive characters, to a large modulus $q$, are known to not vanish at the central point [\textbf{\ref{B}}] (see also [\textbf{\ref{IS}}]). Our results here, together with the mollifier method, may be used to give a slight improvement of such a result. However, we note that in the case of large prime $q$, Khan and Ngo [\textbf{\ref{KN}}], using the ``twisted" mollifier introduced by [\textbf{\ref{S}}] and [\textbf{\ref{MV}}],  have obtained a non-vanishing proportion of $3/8$. Improving that using the usual mollifier would require a mollifier with $\kappa>3/5$, which seems out of reach of the current techniques.

\subsection{Comparison with the $t$-aspect analogue}

We close the introduction with a brief discussion of the main differences between our work and the work of Bettin, Chandee and Radziwi\l\l's on the twisted second moment of the Riemann zeta-function [\textbf{\ref{BCR}}].

In [\textbf{\ref{BCR}}], after applying the approximate functional equation and dyadic decompositions, the main object to study is of the form
\[
\frac{1}{\sqrt{ABMN}}\mathop{\sum \sum \sum\sum}_{\substack{a \asymp A, b \asymp B \\ m \asymp M, n \asymp N,MN\ll T^{1+\eps}}} \alpha_a \beta_b\, \widehat{W}\Big(T\log\frac{am}{bn}\Big),
\]
where $W$, say, is some compactly supported smooth function satisfying $W^{(j)}\ll_j T^{\eps}$ for any fixed $j\geq0$. The diagonal contribution $am=bn$ may be extracted and is fairly easy to understand. For the remaining terms, write $am-bn=r$ with $r\ne0$. The rapid decay of the Fourier transform implies that the contribution of the terms with $|r|>T^{-1+\eps}\sqrt{ABMN}$ is negligible. The off-diagonal contribution is then essentially 
\[
\frac{1}{\sqrt{ABMN}}\sum_{0<|r|\leq T^{-1+\eps}\sqrt{ABMN}}\mathop{\sum \sum \sum\sum}_{\substack{am-bn=r\\a \asymp A, b \asymp B \\ m \asymp M, n \asymp N,MN\ll T^{1+\eps}}} \alpha_a \beta_b.
\]
In particular, this is null unless $AM\asymp BN$. Writing $am-bn=r$ as a congruence condition modulo $b$ and applying the Poisson summation formula transform the above expression to an exponential sum roughly of the form
\begin{equation}\label{BCRexp}
\frac{\sqrt{MN}}{\sqrt{AB}(AM+BN)}\mathop{\sum \sum}_{\substack{0<|r|\leq T^{-1+\eps}\sqrt{ABMN}\\|g|\ll T^{\eps}(AM+BN)/MN}}\,\mathop{\sum \sum}_{\substack{a \asymp A, b \asymp B}} \alpha_a \beta_b\,e\Big(\frac{-rg\overline{a}}{b}\Big).
\end{equation}
Trivially, this is bounded by $T^{-1+\eps}AB\ll T^{2\kappa-1+\eps}$. So any extra power saving is sufficient to break the $\frac12$-barrier. Bettin, Chandee and Radziwi\l\l\ [\textbf{\ref{BCR}}] gained this by utilizing an estimate for sums of Kloosterman fractions in [\textbf{\ref{BC}}] (or, [\textbf{\ref{DFI2}}]).

In our situation, we use the approximate functional equation and apply character orthogonality, as well as dyadic decompositions, to reduce the problem to understanding sums of the form
\begin{align*}
\frac{1}{\sqrt{ABMN}}\mathop{\sum \sum \sum\sum}_{\substack{am \equiv\pm bn (\textrm{mod}\ q)\\a \asymp A, b \asymp B \\ m \asymp M, n \asymp N,MN\ll q^{1+\eps}}} \alpha_a \beta_b.
\end{align*}
Diagonal main terms arise from $am = bn$. We set this contribution aside, and study the remaining terms. %Trivially, the terms with $ABMN\ll q^{2-\eps}$ can be discarded. 
At this point, we can, as above, write the congruence condition modulo $q$ as $am\mp bn=qr$ with $r\ne0$, and switch to a congruence condition modulo $b$. The Poisson summation formula then leads to an exponential sum roughly of the form
\begin{equation}\label{ourexp}
\frac{\sqrt{MN}}{\sqrt{AB}(AM+BN)}\mathop{\sum \sum}_{\substack{0<|r|\ll q^{-1}(AM+BN)\\|g|\ll q^{\eps}(AM+BN)/MN}}\,\mathop{\sum \sum}_{\substack{a \asymp A, b \asymp B}} \alpha_a \beta_b\,e\Big(\frac{-qrg\overline{a}}{b}\Big).
\end{equation}

The results on cancellation in sums of Kloosterman fractions [\textbf{\ref{DFI2}}, \textbf{\ref{BC}}] are still applicable, but only work in the ``balanced'' case when $AM$ and $BN$ are more or less of the same size. Roughly speaking, the $t$-aspect averaging yields the constraint $AM\asymp BN$ and ensures that $r,g\ll T^{\kappa-1/2+\eps}$ in \eqref{BCRexp}. In our situation \eqref{ourexp}, we lack the condition $AM \asymp BN$, and therefore the ranges of summation of $r$ and $g$ can be as large as $q^{\kappa+\eps}$. The trivial bound, which can be $O_\eps(q^{2\kappa-1/2+\eps})$, is worse in our case as well. So a different method is required for the ``unbalanced'' regime when $AM$ and $BN$ are of rather different sizes. Furthermore, it is impossible to ignore the contribution of these terms. This contribution is genuinely large, and, as it turns out, will cancel out with the contribution from the principal character modulo $q$. A similar phenomenon already arose in Young's work on the fourth moment of Dirichlet $L$-functions [\textbf{\ref{Y}}]. We remark that it is possible to show that $I_{\alpha,\beta}\ll_\eps q^\varepsilon$ for some $\kappa > 1/2$ without considering the unbalanced case [\textbf{\ref{DFI1}}].

We begin the treatment of the unbalanced regime by applying the Poisson summation formula to introduce exponential phases. The zero frequency cancels out with the contribution from the principal character modulo $q$. We bound the contribution of the non-zero frequencies with a delicate argument involving the additive large sieve inequality. The phases of the exponentials are rational fractions, and we divide these fractions into two classes: ``good'' fractions and ``bad'' fractions. The good fractions are far apart from each other, and we can immediately apply the additive large sieve inequality to get a saving. The bad fractions can be close together, which weakens the large sieve inequality, but we still obtain a saving since there are comparatively few of these bad fractions.

Lastly, we remark that with our two different methods for the two different regimes, the critical ranges of summation in \eqref{ourexp} are when $A\asymp B\asymp N\asymp q^\kappa$ and $M\asymp q^{2-3\kappa}$. This explains why the ranges of $\kappa$ for the asymptotic formula in our theorems are slightly smaller than the corresponding ranges for the upper bound.  

\begin{remark}
\emph{Throughout the paper $\varepsilon$ denotes an arbitrarily small positive number whose value may change from one line to the next.}
\end{remark}

\section{Initial manipulations}\label{initial}

We start by recalling the orthogonality property of characters and the approximate functional equation.

\begin{lemma}[Orthogonality]\label{ortho}
For $(mn,q)=1$ we have
\[
\sideset{}{^+}\sum_{\chi(\emph{mod}\ q)} \chi(m)\overline{\chi}(n)=\frac 12\sum_{\substack{d|q\\d|(m\pm n)}}\mu\Big(\frac qd\Big)\varphi(d)=\frac 12\sum_{q|(m\pm n)}\varphi(q)-1.
\]
\end{lemma}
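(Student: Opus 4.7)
The plan is to combine two standard devices: Möbius inversion to isolate primitive characters, and the parity projector $\frac{1}{2}(1+\chi(-1))$ to isolate even characters. Both are applied on top of the elementary orthogonality relation $\sum_{\chi(\text{mod } d)} \chi(m)\overline{\chi}(n) = \varphi(d)\mathbf{1}_{m\equiv n\,(\text{mod } d)}$, valid whenever $(mn,d)=1$.

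First I would write the sum over primitive characters via the inducing relation: every character mod $q$ comes from a unique primitive character mod some $d\mid q$, so by Möbius inversion
\[
\sideset{}{^*}\sum_{\chi(\text{mod } q)} F(\chi) \;=\; \sum_{d\mid q} \mu(q/d)\sum_{\chi(\text{mod } d)} F(\chi),
\]
where on the right we view each $\chi$ mod $d$ as inducing a character mod $q$, and under the assumption $(mn,q)=1$ the induced values at $m,n$ coincide with the primitive values. I would apply this with $F(\chi) = \frac{1}{2}\bigl(1+\chi(-1)\bigr)\chi(m)\overline{\chi}(n)$, which is precisely the indicator of even characters weighted by $\chi(m)\overline{\chi}(n)$. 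Splitting the factor $\chi(-1)$ into $\overline{\chi}(-1)$ (they are equal) and absorbing it as $\chi(-m)\overline{\chi}(n)$ gives
\[
\sideset{}{^+}\sum_{\chi(\text{mod } q)} \chi(m)\overline{\chi}(n) = \frac{1}{2}\sum_{d\mid q}\mu(q/d)\Bigl[\sum_{\chi(\text{mod } d)}\chi(m)\overline{\chi}(n) + \sum_{\chi(\text{mod } d)}\chi(-m)\overline{\chi}(n)\Bigr].
\]

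Now I would apply orthogonality to each inner sum, noting that $(mn,d)=1$ since $d\mid q$ and $(mn,q)=1$. The first inner sum equals $\varphi(d)\mathbf{1}_{d\mid(m-n)}$ and the second equals $\varphi(d)\mathbf{1}_{d\mid(m+n)}$. Packaging the two sign cases with the shorthand ``$d\mid(m\pm n)$'' gives the first claimed equality
\[
\sideset{}{^+}\sum_{\chi(\text{mod } q)} \chi(m)\overline{\chi}(n) \;=\; \frac{1}{2}\sum_{\substack{d\mid q \\ d\mid(m\pm n)}}\mu(q/d)\varphi(d).
\]

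For the second equality I would specialize to $q$ prime, so only $d=1$ and $d=q$ occur. The divisor $d=1$ always divides $m\pm n$ and contributes $\mu(q)\varphi(1)=-1$ once for each of the two signs, giving a combined $-2$; the divisor $d=q$ contributes $\mu(1)\varphi(q) = \varphi(q)$ precisely when $q\mid(m\pm n)$. Dividing by $2$ yields $\frac{1}{2}\sum_{q\mid(m\pm n)}\varphi(q) - 1$, as claimed. The only subtle point, which is really just a notational check, is consistency of the shorthand $d\mid(m\pm n)$: for $q$ odd prime and $(n,q)=1$ the two conditions $q\mid(m-n)$ and $q\mid(m+n)$ cannot both hold, so no double counting occurs, and the inner sums in fact evaluate to $\varphi(q)\mathbf{1}_{m\equiv\pm n\,(\text{mod }q)}$ with mutually exclusive alternatives. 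There is no real obstacle beyond this bookkeeping; the proof is a direct composition of standard identities.
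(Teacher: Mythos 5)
Your proof is correct and is exactly the standard argument (Möbius inversion over the inducing relation plus the parity projector $\tfrac12(1+\chi(-1))$) that the paper simply cites from Iwaniec--Sarnak rather than writing out. The specialization to prime $q$ and the observation that $q\mid(m-n)$ and $q\mid(m+n)$ cannot both occur are handled correctly, so nothing is missing.
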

\begin{proof}
The proof is standard. See, for example, [\textbf{\ref{IS}}; (3.1) and (3.2)].
\end{proof}

\begin{lemma}[Approximate functional equation]\label{fe}
Let $\chi$ be an even primitive character and let $G(s)$ be an even entire function of rapid decay in any fixed strip $|\emph{Re}(s)| \leq C$ satisfying $G(0) = 1$. Let
\[
X_\pm(s)=G(s)\frac{\Gamma(\frac{1/2\pm \alpha+s}{2})\Gamma(\frac{1/2\pm\beta+s}{2})}{\Gamma(\frac{1/2+\alpha}{2})\Gamma(\frac{1/2+\beta}{2})}
\]
and
\begin{equation}\label{Vpm}
V_{\pm}(x)=\frac{1}{2\pi i}\int_{(\eps)}X_\pm(s)x^{-s}\frac{ds}{s}.
\end{equation}
Then
\begin{align*}
L(\tfrac 12+\alpha,\chi)L(\tfrac12+\beta,\overline{\chi})&=\sum_{m,n\geq1}\frac{\chi(m)\overline{\chi}(n)}{m^{1/2+\alpha}n^{1/2+\beta}}V_+\Big(\frac{\pi mn}{q}\Big)\\
&\qquad\qquad+\Big(\frac{q}{\pi}\Big)^{-(\alpha+\beta)}\sum_{m,n\geq1}\frac{\chi(m)\overline{\chi}(n)}{m^{1/2-\beta}n^{1/2-\alpha}}V_-\Big(\frac{\pi mn}{q}\Big).
\end{align*}
\end{lemma}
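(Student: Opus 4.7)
The plan is to prove this standard approximate functional equation by the usual Mellin--Barnes contour shift argument. I would start by introducing the integral
\[
\mathcal{I} \;:=\; \frac{1}{2\pi i}\int_{(3)} X_+(s)\Big(\frac{q}{\pi}\Big)^{s} L(\tfrac12+\alpha+s,\chi)\,L(\tfrac12+\beta+s,\overline{\chi})\,\frac{ds}{s},
\]
on the vertical line $\mathrm{Re}(s)=3$, where both $L$-functions are given by absolutely convergent Dirichlet series. Expanding them and interchanging sum and integral, which is justified by the rapid decay of $G(s)$ in vertical strips, gives
\[
\mathcal{I} \;=\; \sum_{m,n\ge 1}\frac{\chi(m)\overline{\chi}(n)}{m^{1/2+\alpha}n^{1/2+\beta}}\cdot\frac{1}{2\pi i}\int_{(3)}X_+(s)\Big(\frac{q}{\pi mn}\Big)^{s}\frac{ds}{s} \;=\; \sum_{m,n\ge 1}\frac{\chi(m)\overline{\chi}(n)}{m^{1/2+\alpha}n^{1/2+\beta}}V_+\!\Big(\frac{\pi mn}{q}\Big),
\]
since the defining contour for $V_+$ can be pushed to $\mathrm{Re}(s)=3$ without crossing any pole.

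Next I would shift the contour from $(3)$ to $(-3)$. The integrand has a simple pole only at $s=0$ (the factor $1/s$; the $L$-functions are entire for non-principal $\chi$, and $X_+(0)=1$ by the definition of $G$), so Cauchy's theorem yields
\[
\mathcal{I} \;=\; L(\tfrac12+\alpha,\chi)L(\tfrac12+\beta,\overline{\chi}) \;+\; \frac{1}{2\pi i}\int_{(-3)}X_+(s)\Big(\frac{q}{\pi}\Big)^{s}L(\tfrac12+\alpha+s,\chi)L(\tfrac12+\beta+s,\overline{\chi})\,\frac{ds}{s}.
\]
On the $(-3)$ line I would apply the functional equations for the two even primitive characters $\chi$ and $\overline{\chi}$:
\[
L(\tfrac12+\alpha+s,\chi) = \epsilon_\chi \Big(\frac{q}{\pi}\Big)^{-\alpha-s}\frac{\Gamma(\frac{1/2-\alpha-s}{2})}{\Gamma(\frac{1/2+\alpha+s}{2})}L(\tfrac12-\alpha-s,\overline{\chi}),
\]
and the analogous one for $L(\tfrac12+\beta+s,\overline{\chi})$. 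The root numbers multiply to $\epsilon_\chi\epsilon_{\overline{\chi}}=\chi(-1)=1$ since $\chi$ is even, and the $\Gamma$-ratios in $X_+(s)$ exactly cancel the $\Gamma(\frac{1/2+\alpha+s}{2})\Gamma(\frac{1/2+\beta+s}{2})$ denominators from the functional equation.

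Finally, substituting $s\mapsto -s$ (using that $G$ is even, and picking up a single minus sign from the reversal of the contour's orientation) reassembles the remaining gamma ratio into $X_-(s)$, so the integral becomes
\[
-\Big(\frac{q}{\pi}\Big)^{-(\alpha+\beta)}\frac{1}{2\pi i}\int_{(3)}X_-(s)\Big(\frac{q}{\pi}\Big)^{s}L(\tfrac12-\alpha+s,\overline{\chi})L(\tfrac12-\beta+s,\chi)\,\frac{ds}{s}.
\]
Expanding the two $L$-functions as absolutely convergent Dirichlet series and relabelling the summation indices produces $-(q/\pi)^{-(\alpha+\beta)}\sum_{m,n}\chi(m)\overline{\chi}(n)m^{-1/2+\beta}n^{-1/2+\alpha}V_-(\pi mn/q)$, and equating the two expressions for $\mathcal{I}$ gives the claimed identity after moving this last term to the other side. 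No step is a genuine obstacle; the only place to be attentive is bookkeeping the signs from the $s\mapsto -s$ change of variable and verifying that $\epsilon_\chi\epsilon_{\overline{\chi}}=1$, which is where the even-parity hypothesis on $\chi$ enters.
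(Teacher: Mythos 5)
Your argument is correct and is precisely the standard Mellin--Barnes derivation that the paper invokes by citing Theorem 5.3 of Iwaniec--Kowalski, so it matches the paper's (one-line) proof. The only point deserving an explicit word is the contour shift from $\mathrm{Re}(s)=3$ to $\mathrm{Re}(s)=-3$: the gamma factors in $X_+(s)$ have poles in this strip at $s=-\tfrac12-\alpha,\,-\tfrac12-\beta,\,-\tfrac52-\alpha,\,-\tfrac52-\beta$, and these are harmless only because they are cancelled by the trivial zeros of the corresponding $L$-functions (equivalently, the integrand is a constant multiple of $G(s)\Lambda(\tfrac12+\alpha+s,\chi)\Lambda(\tfrac12+\beta+s,\overline{\chi})/s$, which is entire apart from $s=0$), so your assertion that $s=0$ is the only pole is true but needs this justification rather than just the entirety of the $L$-functions.
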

\begin{proof}
The proof is standard and can be easily derived following Theorem 5.3 of [\textbf{\ref{IK}}].
\end{proof}
\begin{remark}\label{rmkV} \
\begin{itemize}
\item \emph{It is convenient to prescribe certain conditions on the function $G$. To be precise, we assume $G(s)$ is invariant under the transformations $\alpha\rightarrow-\beta$, $\beta\rightarrow-\alpha$, and vanishes at $s=\pm(\alpha+\beta)/2$. An admissible choice of $G$ is $$G(s)=\frac{(\frac{\alpha+\beta}{2})^2-s^2}{(\frac{\alpha+\beta}{2})^2}\,e^{s^2},$$ but there is no need to specify a particular function $G$.}
\item \emph{If $|\text{Re}(\alpha)|,|\text{Re}(\beta)|\ll (\log q)^{-1}$ and $|\text{Im}(\alpha)|,|\text{Im}(\beta)|\leq T$, then Stirling's approximation gives $$x^jV_{\pm}^{(j)}(x)\ll_{j,C} (1+|x|/T)^{-C}$$ for any fixed $j\geq0$ and $C>0$.}
\end{itemize}
\end{remark}

%We may and shall assume that $\alpha_a,\beta_b$ are sequences of complex numbers supported on $[A,2A]$ and $[B,2B]$ with $A,B\leq q^\kappa$ and satisfy $\alpha_a\ll A^\eps, \beta_b\ll B^\eps$.

From Lemma \ref{fe} and Lemma \ref{ortho} we get
\begin{eqnarray*}
I_{\alpha,\beta}&=&\sum_{\substack{a,b\leq q^\kappa\\m,n\geq1}}\frac{\alpha_a\overline{\alpha_b}}{\sqrt{ab}m^{1/2+\alpha}n^{1/2+\beta}}V_{+}\Big(\frac{\pi mn}{q}\Big)\frac{1}{\varphi^+(q)}\ \sideset{}{^+}\sum_{\chi(\textrm{mod}\ q)} \chi(am)\overline{\chi}(bn)\nonumber\\
&&\quad\quad+\Big(\frac{q}{\pi}\Big)^{-(\alpha+\beta)}\sum_{\substack{a,b\leq q^\kappa\\m,n\geq1}}\frac{\alpha_a\overline{\alpha_b}}{\sqrt{ab}m^{1/2-\beta}n^{1/2-\alpha}}V_{-}\Big(\frac{\pi mn}{q}\Big)\frac{1}{\varphi^+(q)}\ \sideset{}{^+}\sum_{\chi(\textrm{mod}\ q)} \chi(am)\overline{\chi}(bn)\\
&=&J_{\alpha,\beta}^{+}+\Big(\frac{q}{\pi}\Big)^{-(\alpha+\beta)}J_{-\beta,-\alpha}^{-},
\end{eqnarray*}
where 
\begin{align*}
J_{\alpha,\beta}^{+}&=\frac{\varphi(q)}{2\varphi^+(q)}\sum_{\substack{a,b\leq q^\kappa\\am\equiv \pm bn(\text{mod}\ q)\\(mn,q)=1}}\frac{\alpha_a\overline{\alpha_b}}{\sqrt{ab}m^{1/2+\alpha}n^{1/2+\beta}}V_{+}\Big(\frac{\pi mn}{q}\Big)\\
&\qquad\qquad\qquad\qquad\qquad\qquad-\frac{1}{\varphi^+(q)}\sum_{\substack{a,b\leq q^\kappa\\(mn,q)=1}}\frac{\alpha_a\overline{\alpha_b}}{\sqrt{ab}m^{1/2+\alpha}n^{1/2+\beta}}V_{+}\Big(\frac{\pi mn}{q}\Big),
\end{align*}
and $J^-$ is the same but with $V_-$ in place of $V_+$. Note from Remark \ref{rmkV} that the condition $(mn,q)=1$ may be omitted with the cost of an error of size $O_\eps(q^{\kappa-3/2+\eps})$. The same error applies when we replace $\varphi(q)$ by $q$ and $\varphi^+(q)$ by $q/2$. So, up to an error term of size $O_\eps(q^{\kappa-3/2+\eps})$, we have
\begin{align*}
J_{\alpha,\beta}^{+}&=\sum_{\substack{a,b\leq q^\kappa\\am\equiv\pm bn(\text{mod}\ q)}}\frac{\alpha_a\overline{\alpha_b}}{\sqrt{ab}m^{1/2+\alpha}n^{1/2+\beta}}V_{+}\Big(\frac{\pi mn}{q}\Big)-\frac{2}{q}\sum_{\substack{a,b\leq q^\kappa\\m,n\geq1}}\frac{\alpha_a\overline{\alpha_b}}{\sqrt{ab}m^{1/2+\alpha}n^{1/2+\beta}}V_{+}\Big(\frac{\pi mn}{q}\Big)\\
&=\mathcal{M}_{\alpha,\beta}^{+}+S_{\alpha,\beta}^{+}-\frac{2}{q}\sum_{\substack{a,b\leq q^\kappa\\m,n\geq1}}\frac{\alpha_a\overline{\alpha_b}}{\sqrt{ab}m^{1/2+\alpha}n^{1/2+\beta}}V_{+}\Big(\frac{\pi mn}{q}\Big),
\end{align*}
where $\mathcal{M}_{\alpha,\beta}^{+}$ and $S_{\alpha,\beta}^{+}$ are the contributions from the diagonal terms $am=bn$ and the off-diagonal terms $am\ne bn$ in the first sum, respectively. We similarly define $\mathcal{M}_{-\beta,-\alpha}^{-}$ and $S_{-\beta,-\alpha}^{-}$.

Much of this paper is spent studying the off-diagonal terms $S^\pm$. We shall complete the proofs of Theorem \ref{mthm} and Theorems \ref{thm2}, \ref{thm3} in Sections \ref{smthm} and \ref{sthm2}. We just finish this section with a partial evaluation of $\mathcal{M}^{\pm}$. Replacing $a,b$ by $da,db$ with $(a,b)=1$, we see that $m=bn'$ and $n=an'$ for some $n'\in\mathbb{N}$. Hence
\begin{align}\label{M+}
\mathcal{M}_{\alpha,\beta}^{+}&=\sum_{\substack{da,db\leq q^\kappa\\(a,b)=1}}\frac{\alpha_{da}\overline{\alpha_{db}}}{da^{1+\beta}b^{1+\alpha}}\sum_{n\geq1}\frac{1}{n^{1+\alpha+\beta}}V_{+}\Big(\frac{\pi abn^2}{q}\Big)\nonumber\\
&=\sum_{\substack{da,db\leq q^\kappa\\(a,b)=1}}\frac{\alpha_{da}\overline{\alpha_{db}}}{da^{1+\beta}b^{1+\alpha}}\frac{1}{2\pi i}\int_{(\eps)}X_+(s)\Big(\frac{q}{\pi ab}\Big)^s\zeta(1+\alpha+\beta+2s)\frac{ds}{s},
\end{align}
and a similar expression holds for $\mathcal{M}^-$.
%We shall extract two main terms from it, the diagonal terms $am=bn$ and a secondary main term. As it turns out, the latter will cancel out the second expression in $J_{\alpha,\beta}^{+}$.

\section{Main propositions}

In this section we focus on the sum
\[
\sum_{\substack{am\equiv\pm bn(\text{mod}\ q)\\am\ne bn}}\alpha_a\beta_b
\]
over dyadic intervals. Theorem \ref{mthm} is deduced from the following two propositions. The first result essentially treats the case when $m$ and $n$ are close, while the second one deals with the case when $m$ and $n$ are far apart. 

\begin{proposition}\label{prop1}
Let $A,B,M,N\geq1$, and let $\alpha_a,\beta_b$ be two sequences of complex numbers supported on $[A,2A]$ and $[B,2B]$ satisfying $\alpha_a\ll A^\eps, \beta_b\ll B^\eps$. Let $W_1$ and $W_2$ be smooth functions supported in $[1,2]$ such that $W_1^{(j)},W_2^{(j)}\ll_j q^{\eps}$ for any fixed $j\geq0$. Let
\[
\mathcal S =\frac{1}{\sqrt{ABMN}}\sum_{\substack{am\equiv\pm bn(\emph{mod}\ q)\\am\ne bn}}\alpha_a\beta_bW_1\Big(\frac{m}{M}\Big)W_2\Big(\frac{n}{N}\Big).
\]
Then
\begin{equation}\label{prop1-1}
\mathcal{S}\ll_\eps q^{-1+\eps}\sqrt{ABMN}.
\end{equation}

If $ABMN\gg q^{2-\eps}$, then 
\begin{equation}\label{prop1-2}
\mathcal S= \mathcal M_1^++\mathcal M_1^-+\mathcal E ,
\end{equation}
where
\begin{equation}\label{mainM2}
\mathcal{M}_1^\pm=\frac{1}{\sqrt{ABMN}}\sum_{\substack{d\geq 1\\r\ne0}}\sum_{(a,b)=1}\alpha_{da}\beta_{db}\int  W_1\Big(\frac{bx}{M}\Big)W_2\Big(\frac{\pm(abx-qr)}{bN}\Big)dx
\end{equation}
and
\begin{align}\label{bdE1}
\mathcal{E}&\ll_\eps q^{-17/20+\eps}(AB)^{-3/20}(AM+BN)^{17/10}(A+B)^{1/4}(MN)^{-17/20}\nonumber\\
&\qquad\qquad +q^{-1+\eps}(AB)^{-1/8}(AM+BN)^2(A+B)^{1/8}(MN)^{-1}.
\end{align}

\end{proposition}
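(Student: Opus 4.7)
The plan is to prove (\ref{prop1-1}) by a direct lattice-point count on the hyperbola $am-(\pm bn)=qr$, and to prove (\ref{prop1-2})--(\ref{bdE1}) by Poisson summation modulo $b$, extracting the zero frequency as $\mathcal{M}_1^\pm$ and bounding the nonzero frequencies via the Bettin--Chandee estimate for sums of Kloosterman fractions.

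For the trivial bound (\ref{prop1-1}): I write the congruence $am\equiv \pm bn\pmod q$ together with $am\ne bn$ as $am-(\pm bn)=qr$ for some $r\ne 0$ (in the ``$-$'' case $am+bn>0$ makes $r\ne 0$ automatic), with $|r|\ll (AM+BN)/q$. Setting $d=(a,b)$, $a=da'$, $b=db'$ with $(a',b')=1$, the equation $d(a'm\mp b'n)=qr$ forces $d\mid r$ (as $(d,q)=1$ because $d<q$), and writing $r=dr'$ the solutions $(m,n)$ lie on the lattice $(m_0,n_0)+k(b',\mp a')$. The number of $k$ with $m\sim M$, $n\sim N$ is $\ll \min(M/b',N/a')$. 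Summing over $d\mid r$, $|r|\ll (AM+BN)/q$ and $a\sim A$, $b\sim B$, and using the elementary identity $(AM+BN)\min(AM,BN)\asymp ABMN$, yields a total count $\ll q^\eps\,ABMN/q$; inserting $|\alpha_a\beta_b|\ll q^\eps$ and the factor $1/\sqrt{ABMN}$ gives (\ref{prop1-1}).

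For (\ref{prop1-2})--(\ref{bdE1}): after the same substitution $a=da'$, $b=db'$, $r=dr'$, the equation reads $a'm\mp b'n=qr'$ with $(a',b')=1$, so for fixed $d,a',b',r'$ the variable $m$ lies in the progression $m\equiv \pm\overline{a'}qr'\pmod{b'}$ and $n=(a'm\mp qr')/b'$ is then determined. Poisson summation mod $b'$ gives
\[
\sum_{m\equiv\pm\overline{a'}qr'(\mathrm{mod}\ b')} W_1\!\Big(\tfrac{m}{M}\Big)W_2\!\Big(\tfrac{\pm(a'm-qr')}{b'N}\Big) \;=\; \frac{1}{b'}\sum_{h\in\mathbb{Z}} e\!\Big(\tfrac{\pm h\overline{a'}qr'}{b'}\Big)\widehat{F}\!\Big(\tfrac{h}{b'}\Big),
\]
where $F(x)=W_1(x/M)W_2(\pm(a'x-qr')/(b'N))$. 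The $h=0$ term, after the change of variable $x=b'y$, equals the integrand of (\ref{mainM2}); summing over $d$, $r'$, $a'$, $b'$ and the two signs reproduces $\mathcal{M}_1^++\mathcal{M}_1^-$ after renaming $r'\to r$ and $(a',b')\to(a,b)$. Repeated integration by parts, using $W_i^{(j)}\ll_j q^\eps$, restricts the remaining terms to $|h|\ll H:=q^\eps(AM+BN)/(dMN)$ and $|r'|\ll R:=q^\eps(AM+BN)/(dq)$ up to negligible error, and the error $\mathcal{E}$ becomes an average over $d,h,r'$ of bilinear sums
\[
\mathop{\sum\sum}_{\substack{a'\sim A/d,\,b'\sim B/d\\(a',b')=1}}\alpha_{da'}\overline{\alpha_{db'}}\,e\!\Big(\frac{hqr'\overline{a'}}{b'}\Big).
\]
To these I would apply the Bettin--Chandee estimate for sums of Kloosterman fractions (the same input used in [\textbf{\ref{BCR}}]); the two terms in (\ref{bdE1}) correspond to the two regimes of that estimate, with the $17/20$, $3/20$ exponents produced by the main Bettin--Chandee bound and the $1/8$, $-1$ exponents produced by the complementary bound used when the Bettin--Chandee range becomes trivial. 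Möbius inversion handles the coprimality $(a',b')=1$ at the cost of a harmless $q^\eps$, and the final sum over $d,h,r'$ combined with the normalization $1/\sqrt{ABMN}$ produces exactly (\ref{bdE1}).

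The main obstacle is the bilinear Kloosterman estimate: one must track how the effective frequency $hqr'$, which runs over a range as large as $HRq\asymp q^\eps(AM+BN)^2/(d^2MN)$, interacts with the denominators $b'\sim B/d$ in the phases $e(hqr'\overline{a'}/b')$, and ensure that the nontrivial savings coming from Bettin--Chandee survive the outer averaging over $h$ and $r'$ (whose lengths are typically much larger than unity) and the summation over the divisor $d$. Balancing these ranges against the $(AB)^{-3/20}(A+B)^{1/4}$ and $(AB)^{-1/8}(A+B)^{1/8}$ losses from the two Kloosterman bounds is precisely what produces the asymmetric exponents in (\ref{bdE1}), and this balancing is the source of the $\kappa=1/2+1/202$ threshold appearing in Theorem \ref{mthm}.
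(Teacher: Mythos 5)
Your overall architecture (write $am\mp bn=qr$, pass to a congruence modulo $b/(a,b)$, Poisson in $m$, zero frequency giving \eqref{mainM2}) matches the paper, but there is a genuine gap at the decisive step: your treatment of the nonzero frequencies cannot produce the bound \eqref{bdE1}. You propose to fix $(d,h,r')$, bound the resulting bilinear sum $\sum_{a',b'}\alpha_{da'}\overline{\alpha_{db'}}e(hqr'\overline{a'}/b')$ by a Kloosterman-fraction estimate, and then sum over $h$ and $r'$ absolutely ("ensure the savings survive the outer averaging"). The exponents in \eqref{bdE1} are sublinear in the number of frequencies: the first term scales like $(ABRG)^{17/20}$ with $R\asymp(AM+BN)/q$, $G\asymp(AM+BN)/MN$, i.e.\ like $(RG)^{7/20}$ times the $\ell^2$-norm $(RG)^{1/2}$ of the frequency coefficients. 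No single-frequency bilinear bound, summed trivially over the $\asymp RG$ values of $(h,r')$, can be sublinear in $RG$; in the critical ranges ($A\asymp B\asymp N\asymp q^{\kappa}$, $M\asymp q^{2-3\kappa}$) this loses a genuine power of $q$. The paper's key input is instead the \emph{trilinear} estimate of Bettin--Chandee [\textbf{\ref{BC}}, Theorem 1], applied to $Z_d(x)$ with the sums over $r$ and $g$ kept \emph{inside}, the combined frequency $qrg$ playing the role of the third (divisor-bounded) variable; the $\ell^2$-dependence on that variable is exactly where the $7/20$, $3/8$, $1/4$, $1/8$ exponent structure of \eqref{bdE1} comes from. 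Indeed the paper notes that the per-frequency bilinear route (Theorem 2 of [\textbf{\ref{DFI2}}]) only reaches $\kappa<1/2+1/526$ rather than $1/2+1/202$, so your plan would at best prove a strictly weaker proposition. A related technical point: after your direct Poisson step the weight $\widehat F(h/b')$ entangles $a',b',r',h$, so before applying any estimate with arbitrary coefficients you must separate variables; the paper does this by a fourfold Mellin inversion of $W_2(\pm(am-qr)/(bN))$ \emph{before} Poisson, which you omit.

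There is also a smaller flaw in your proof of \eqref{prop1-1}: you bound the number of lattice points $(m,n)$ on $a'm\mp b'n=qr'$ inside the box by $\min(M/b',N/a')$, but without the additive $+1$ this is false when $b'>M$ or $a'>N$ (the box can still contain one point), and the discarded $+1$ contribution, summed over $a',b',r'$, is of size $AB(AM+BN)/q$, which exceeds the claimed $q^{\eps}ABMN/q$ precisely in the unbalanced ranges $B\gg M$ or $A\gg N$ that matter later. The paper avoids this by fixing $a,m,r$ and counting the pairs $(b,n)$ with $bn=\pm(am-qr)$ via the divisor bound, which gives $\ll q^{\eps}R\min\{AM,BN\}$ uniformly; you would need essentially that argument to repair this regime.
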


%We shall concentrate on the off-diagonal terms, $$\boxed{E=\frac{1}{\sqrt{ABMN}}\sum_{\substack{am\equiv bn(\textrm{mod}\ q)\\am\ne bn}}\alpha_a\beta_bW\Big(\frac{m}{M}\Big)W\Big(\frac{n}{N}\Big).}$$ Here, without loss of generality, we assume \footnote{Throughout, I will omit the factor $q^\varepsilon$ in all the estimates.}
%\[
%\boxed{A,B\ll q^{1/2+\delta},\qquad MN\ll q\qquad\textrm{and}\qquad M\geq N.}
%\]
%Keep in mind that we want $E=o(1)$.

%\section{Modulo $a$ or $b$}
 \begin{proof}
To prove \eqref{prop1-1}, we write $am\mp bn=qr$. Then $$0< |r|\leq R=4(AM+BN)q^{-1},$$ and
\begin{equation}\label{555}
\mathcal{S}=\frac{1}{\sqrt{ABMN}}\sum_{0<|r|\leq R}\sum_{am\mp bn=qr}\alpha_a\beta_bW_1\Big(\frac{m}{M}\Big)W_2\Big(\frac{n}{N}\Big).
\end{equation}
We can take the innermost sum over all $a$ and $m$, and the sums over $b$ and $n$ being over all divisors of $(am-qr)$. So, by symmetry,
\begin{equation*}
\mathcal{S}\ll_\eps \frac{q^\eps R}{\sqrt{ABMN}}\min\{AM,BN\}\ll_\eps q^{-1+\eps}\sqrt{ABMN},
\end{equation*}
and we obtain \eqref{prop1-1}.
%Thus we can assume that
%\[
%\boxed{ABMN\gg q^2.}
%\]

%If we apply Theorem 1 of [\textbf{\ref{DFI2}}] then
%\begin{align*}
%E'&\ll \frac{\min\{AM,BN\}}{\sqrt{ABMN}AB}RH(AB)^{1/2}\bigg((A+B)^{1/2}+\Big(1+\frac{qRH}{AB}\Big)^{1/2}\min\{A,B\}\bigg)\\
%&\ll \frac{\min\{AM,BN\}}{\sqrt{ABMN}AB}\frac{(AM+BN)^2}{qMN}(AB)^{1/2}\Big((A+B)^{1/2}+\frac{(AM+BN)\min\{A,B\}}{\sqrt{ABMN}}\Big)\\
%&\ll \frac{(AM+BN)(A+B)^{1/2}}{q\sqrt{MN}}+\frac{(AM+BN)^2\min\{A,B\}}{q\sqrt{AB}MN}
%\end{align*}

%So far we have not specify the sequences $(\alpha_a)$ and $(\beta_b)$.

We next prove \eqref{prop1-2}. We proceed from \eqref{555}. Without loss of generality, assume that $AM\ll BN$ (consequently, we also have $qR\ll BN$). This simplifies the decision on which variable to eliminate. We will eliminate $n$ in this case, so we first introduce Mellin inversion to write
\begin{align*}
W_2\Big(\frac{n}{N}\Big)&=W_2\Big(\frac{\pm(am-qr)}{bN}\Big)\\
&=\frac{1}{(2\pi i)^4}\int_{(\varepsilon)}\int_{(\varepsilon)}\int_{(\varepsilon)}\int_{(\varepsilon)}\widehat{f_\pm}(u,v,w,z)a^{-u}b^{-v}m^{-w}r^{-z}dudvdwdz.
\end{align*} 
Note that by integration by parts $j$ times on each variable of the expression
\[
\widehat{f_\pm}(u,v,w,z)=\int_{0}^{\infty}\int_{0}^{\infty}\int_{0}^{\infty}\int_{0}^{\infty}a^{u-1}b^{v-1}m^{w-1}r^{z-1}W_2\Big(\frac{\pm(am-qr)}{bN}\Big)dadbdmdr,
\]
 we have
\begin{equation}\label{fhat}
\widehat{f_\pm}(u,v,w,z)\ll_{\eps,j} A^{\text{Re}(u)}B^{\text{Re}(v)}M^{\text{Re}(w)}R^{\text{Re}(z)}\Big(q^{-\eps} \big(1+|u|\big)\big(1+|v|\big)\big(1+|w|\big)\big(1+|z|\big)\Big)^{-j}
\end{equation}
uniformly for $\text{Re}(u),\text{Re}(v),\text{Re}(w),\text{Re}(z)\geq \eps$, and for any fixed $j\geq 0$. Hence we may restrict the $u,v,w,z$-integrals to
\[
|u|,|v|,|w|,|z|\ll q^\eps.
\]

Let $d=(a,b)$ (note that this implies $d|r$). We can now remove $n$ by writing $am\mp bn=qr$ as $m\equiv q(r/d)\overline{a/d}(\textrm{mod}\ b/d)$. The Poisson summation formula  then yields
\begin{eqnarray}\label{1}
\!\!\!\!\!\sum_{\substack{m,n\\am\mp bn=qr}}m^{-w}W_1\Big(\frac{m}{M}\Big)&&=\sum_{m\equiv q(r/d)\overline{a/d}(\textrm{mod}\ b/d)}m^{-w}W_1\Big(\frac{m}{M}\Big)\nonumber\\
&&=\sum_{g\in\mathbb{Z}}e\Big(\frac{-q(r/d)g\overline{a/d}}{b/d}\Big)\int \Big(\frac{bx}{d}\Big)^{-w}W_1\Big(\frac{bx}{dM}\Big)e(gx)dx.
\end{eqnarray}
Note that the integral is over $x\asymp dM/B\asymp dMN/(AM+BN)$.

For the term $g=0$, we fold back the Mellin inversion and get the terms $\mathcal{M}_1^\pm$ in \eqref{mainM2}. The restriction $d|r|\leq R$ may be removed due to the support of $W$. For the terms $g\ne0$, integration by parts $j$ times implies that
\begin{eqnarray*}
\int \Big(\frac{bx}{d}\Big)^{-w}W_1\Big(\frac{bx}{dM}\Big)e(gx)dx&\ll_j&\frac{dM^{1-\text{Re}(w)}}{B}\Big(\frac{(1+|w|)B}{gdM}\Big)^j
\end{eqnarray*}
for any fixed $j\geq 0$. So we may restrict the sum in \eqref{1} to $0<|g|\leq G/d$, where 
\[
G=\frac{ q^\eps B}{M}\asymp\frac{ q^\eps(AM+BN)}{MN}.
\]
Hence,
\begin{equation}\label{contS}
\mathcal{S}=\mathcal{M}_1^++\mathcal{M}_1^-+ \mathcal{E}+O_C\big(q^{-C}\big)
\end{equation}
with
\begin{align}\label{boundE1}
\mathcal{E}\ll &\frac{1}{\sqrt{ABMN}}\sum_{d\leq R}\int_{(\varepsilon)}\int_{(\varepsilon)}\int_{(\varepsilon)}\int_{(\varepsilon)}\int_{x\asymp dMN/(AM+BN)}\nonumber \\
&\qquad\qquad x^{-\text{Re}(w)}\big|\widehat{f_\pm}(u,v,w,z)\big|\big|Z_d(x)\big|dxdudvdwdz,
\end{align}
where
\begin{equation*}
Z_d(x)=d^{-(u+v+z)}\sum_{\substack{0<|r|\leq R/d\\0<|g|\leq G/d}}\sum_{(a,b)=1}a^{-u}b^{-(v+w)}r^{-z}\alpha_{da}\beta_{db}\,e\Big(\frac{-qrg\overline{a}}{b}+gx\Big)W_1\Big(\frac{bx}{M}\Big).
\end{equation*}

The above expression is in the form of Theorem 1 of [\textbf{\ref{BC}}]. %\hcom{In fact, their theorem is for
%\[
%\sum_{a\sim A}\sum_{(m,n)=1}\alpha_m\beta_n\nu_ae\Big(\theta\frac{a\overline{m}}{n}\Big).
%\]
%What we want is
%\[
%\sum_{\substack{a\sim A\\b\sim B}}\sum_{(m,n)=1}\alpha_m\beta_n\mu_a\nu_be\Big(\theta\frac{ab\overline{m}}{n}\Big).
%\]
%So there may be some improvement here.}
We apply this and get
\begin{align*}
Z_d(x)&\ll_\eps q^\eps\Big(\frac{ABRG}{d^4}\Big)^{1/2}\Big(1+\frac{qRG}{AB}\Big)^{1/2}\\
&\qquad\quad\bigg(\Big(\frac{ABRG}{d^4}\Big)^{7/20}\Big(\frac Ad+\frac Bd\Big)^{1/4}+\Big(\frac{AB}{d^2}\Big)^{3/8}\Big(\frac{RG}{d^2}\Big)^{1/2}\Big(\frac Ad+\frac Bd\Big)^{1/8}\bigg)\\
& \ll_\eps d^{-2}q^{-1/2+\eps}\frac{(AM+BN)^2}{MN}\\
&\qquad\quad\Big(\frac{(AB)^{7/20}(AM+BN)^{7/10}(A+B)^{1/4}}{d^{33/20}(qMN)^{7/20}}+\frac{(AB)^{3/8}(AM+BN)(A+B)^{1/8}}{d^{15/8}(qMN)^{1/2}}\Big).
\end{align*}
Plugging this into \eqref{boundE1} and using the bound in \eqref{fhat} we obtain \eqref{bdE1}.
\end{proof}

\begin{proposition}\label{prop2}
Assume the conditions of Proposition \ref{prop1} and that $BM\ll q^{1-\eps}$, $A\ll N$, $B\gg M$. Then
\[
\mathcal S= \mathcal M_2+\mathcal E ,
\]
where
\begin{equation}\label{mainM}
\mathcal{M}_2=\frac{2}{\sqrt{ABMN}}\sum_{a,b}\sum_m\alpha_a\beta_bW_1\Big(\frac{m}{M}\Big)\int W_2\Big(\frac{qx}{N}\Big)dx
\end{equation}
and
\begin{eqnarray*}
\mathcal E\ll_\varepsilon  \Big(\frac{q}{BM}\Big)^{-1/6+\eps}+q^\eps\Big(\sqrt{\frac AN}+\sqrt{\frac MB}\Big).
\end{eqnarray*}
\end{proposition}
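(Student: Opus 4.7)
My plan is to apply Poisson summation in the $n$-variable to each sign in $am \equiv \pm bn \pmod q$, extract the zero-frequency contribution as $\mathcal{M}_2$, and bound the non-zero frequencies via the additive large sieve with a good/bad classification of the resulting rational phases.

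Splitting $\mathcal S = \mathcal S^+ + \mathcal S^-$ according to the two signs, and observing that the contribution of $b$ with $q \mid b$ is negligible (since $B \ll q^{1-\eps}$), we may assume $(b, q) = 1$ throughout. The congruence becomes $n \equiv \pm\overline{b}\,am \pmod q$, and Poisson summation yields
\[
\sum_{n \equiv n_0 \pmod q} W_2(n/N) = \frac{N}{q}\sum_{h \in \mathbb Z}\widehat{W_2}(Nh/q)\,e(hn_0/q),
\]
whose $h = 0$ term, summed over $\pm$ and over $a, b, m$, gives exactly $\mathcal M_2$ (using $\int W_2(qx/N)\,dx = (N/q)\widehat{W_2}(0)$, with the two signs contributing identically). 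The $+$-sign congruence sum also includes the diagonal $am = bn$, which must be subtracted since $\mathcal S$ excludes it. Writing $d = (a,b)$ and $a = da'$, $b = db'$ with $(a', b') = 1$, this diagonal forces $m = b'k$ and $n = a'k$ with $a'M \asymp b'N$; a direct count yields
\[
\mathcal D \ll q^\eps\sqrt{AM/(BN)} \ll q^\eps\bigl(\sqrt{A/N} + \sqrt{M/B}\bigr),
\]
using $A \ll N$ and $B \gg M$ so that $A/N$ and $M/B$ are $O(1)$.

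The bulk of the work is the non-zero frequency contribution. By rapid decay of $\widehat{W_2}$, the $h$-sum may be truncated to $0 < |h| \leq H := q^{1+\eps}/N$. Combining $a$ and $m$ into $n := am$ with coefficients $c_n := \sum_{am = n}\alpha_a W_1(m/M) \ll q^\eps$ supported on $n \asymp AM$, the task reduces to estimating
\[
\Sigma = \sum_\pm \sum_{0 < |h| \leq H}\widehat{W_2}(Nh/q)\sum_b \beta_b \sum_n c_n\,e\!\Bigl(\frac{\pm hn\overline{b}}{q}\Bigr).
\]
The phases $\pm h\overline{b}/q \pmod 1$ lie in $\frac{1}{q}\mathbb Z$. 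I would partition the index pairs $(h, b)$ into \emph{good} pairs, those whose fraction $h\overline{b}/q$ is at distance $\geq \Delta$ from every other such fraction, and \emph{bad} pairs otherwise, with $\Delta$ to be optimized. On the good set, the additive large sieve gives
\[
\sum_{(h,b)\text{ good}}\Bigl|\sum_n c_n\,e(hn\overline b/q)\Bigr|^2 \ll (AM + \Delta^{-1})\|c\|_2^2,
\]
followed by Cauchy-Schwarz in $(h, b)$. For the bad set, the identity
\[
h_1\overline{b_1} - h_2\overline{b_2} \equiv \overline{b_1 b_2}(h_1 b_2 - h_2 b_1) \pmod q
\]
together with divisor bounds controls the number of close 4-tuples and hence the number of bad pairs, allowing a direct estimate. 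Optimizing $\Delta$ balances the two bounds and yields the claimed $(q/(BM))^{-1/6+\eps}$ contribution.

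The main obstacle will be this large-sieve analysis on the good/bad partition. Although the phases form a sparse subset of $\frac{1}{q}\mathbb Z$, the inverse map $b \mapsto \overline b \pmod q$ distributes them highly irregularly, so the trivial large sieve with $\delta = 1/q$ is too weak; the delicate balance between the good-set saving (which improves as $\Delta$ grows) and the bad-set count (which deteriorates as $\Delta$ grows) is what produces the $1/6$ exponent.
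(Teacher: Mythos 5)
Your opening moves (Poisson summation in $n$, identifying the $h=0$ frequency with $\mathcal M_2$, truncating at $H=q^{1+\eps}/N$, and absorbing the diagonal $am=bn$ into the error) coincide with the paper's, and your explicit treatment of the diagonal is if anything more careful than the paper's. The gap is in the pivotal step: which variables get fused into the well-spaced point set for the large sieve. You take the points to be the fractions $h\overline b/q$ indexed by pairs $(h,b)$ and put $n=am$ (length $AM$) into the trigonometric polynomial. The paper does the opposite: it fuses $b$ and $m$ into $u\equiv m\overline b\ (\mathrm{mod}\ q)$, so the point set has size $\ll BM\ll q^{1-\eps}$ — the hypothesis $BM\ll q^{1-\eps}$ is used precisely to make $(b,m)\mapsto m\overline b$ injective on coprime pairs, giving $\#\,\mathcal U_l\ll BM/l^2$ and multiplicities $\nu_l(u)\ll q^\eps$ — while $a$ and $h$ go into the polynomial, of length $AH\ll q^{1+\eps}A/N\ll q^{1+\eps}$ by the hypothesis $A\ll N$. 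Your arrangement has $\asymp HB\asymp q^{1+\eps}B/N$ points; in the critical ranges (e.g.\ $A\asymp B\asymp N\asymp q^{\kappa}$, $M\asymp q^{2-3\kappa}$) this is $\asymp q$, and it can exceed $q$ since $B\ll N$ is not assumed. With $\sim q$ points lying in $\tfrac1q\mathbb Z$, at every admissible scale $\Delta\geq 1/q$ a positive proportion of the pairs $(h,b)$ have a near neighbour, so the bad set is never small and the good set cannot be given spacing better than $\asymp 1/q$; the good/bad optimization then collapses to $(HB)^{1/2}\big((q+AM)AM\big)^{1/2}$, which after multiplying by $\sqrt N/(q\sqrt{ABM})$ gives only $O_\eps(q^{\eps})$ — no power saving, and in particular not the $(q/BM)^{-1/6}$ term.

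Two further points reinforce this. Your bad-quadruple count via $h_1b_2-h_2b_1\equiv sb_1b_2\ (\mathrm{mod}\ q)$ lacks the small-size anchor of the paper's identity $(sb_1-m_1)(sb_2+m_2)\equiv -m_1m_2\ (\mathrm{mod}\ q)$, where $|m_1m_2|\ll M^2$ and the point count is $\ll BM\ll q^{1-\eps}$: in your version both $h_ib_j$ and $sb_1b_2$ can exceed $q$, so wraparound prevents the divisor argument from pinning down the variables, and exact collisions $h_1\overline{b_1}\equiv h_2\overline{b_2}$ with $h_1b_2\neq h_2b_1$ occur with unbounded multiplicity, breaking the clean large sieve application (which requires distinct points or bounded multiplicities). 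Finally, even on your good set the large sieve leaves a term of size $\sqrt{AM/q}$, which is only dominated by the stated error if one additionally assumes $MN\ll q^{1+\eps}$, a hypothesis not present in the proposition. The fix is the paper's dual arrangement: make $\{m\overline b \bmod q\}$ the (sparse, essentially multiplicity-one) point set, run the good/bad dichotomy there with the spacing parameter $X$, and put the $a,h$ variables into the exponential sum of length $AH$.
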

\begin{proof}
%We now asume that $ABMN\gg q^{2-\eps}$, since otherwise the second statement is trivial. 
Applying the Poisson summation formula over $n$ gives
\begin{align*}
\mathcal S &=\frac{1}{\sqrt{ABMN}}\sum_{a,b}\sum_{m}\alpha_a\beta_bW_1\Big(\frac{m}{M}\Big)\sum_{n\equiv\pm am\overline{b}(\text{mod}\ q)}W_2\Big(\frac{n}{N}\Big)\\
&=\frac{\sqrt{N}}{q\sqrt{ABM}}\sum_{a,b}\sum_{m}\sum_{h\in\mathbb{Z}}\alpha_a\beta_b\, e\Big(\frac{\pm ahm\overline{b}}{q}\Big)W_1\Big(\frac{m}{M}\Big)\widehat{W_2}\Big(\frac{hN}{q}\Big).
\end{align*}
The contribution of the term $h=0$ corresponds to the term $\mathcal{M}_2$ in \eqref{mainM}. For $h\ne0$ the rapid decay of the Fourier transform means we may restrict the sum over $h$ to  $0<|h|\leq H$, where 
\[
H=\frac{q^{1+\eps}}{N}.
\]
Thus we have
\[
\mathcal{S}=\mathcal{M}_2+\mathcal{E}+O_C\big(q^{-C}\big),
\]
where
\begin{align}\label{Z1}
\mathcal{E}&=\frac{\sqrt{N}}{q\sqrt{ABM}}\sum_{a,b}\sum_m\sum_{0<|h|\leq H}\alpha_a\beta_b\, e\Big(\frac{\pm ahm\overline{b}}{q}\Big)W_1\Big(\frac{m}{M}\Big)\widehat{W_2}\Big(\frac{hN}{q}\Big)\nonumber\\
&=\frac{\sqrt{N}}{q\sqrt{ABM}}\sum_{l}\sum_{u\in\mathcal{U}_l}\nu_l(u)\sum_{a}\sum_{0<|h|\leq H}\alpha_a\, e\Big(\frac{\pm ahu}{q}\Big)\widehat{W_2}\Big(\frac{hN}{q}\Big).
\end{align}
Here
\begin{equation}\label{condition}\mathcal{U}_l=\big\{0<u<q: \exists\, B/l\leq b\leq 2B/l,\, M/l\leq m\leq 2M/l,\, (b,m)=1,\,m\overline{b}\equiv u(\text{mod}\ q)\big\}\end{equation}
and $$\nu_l(u)=\sum_{\substack{m\overline{b}\equiv u(\text{mod}\ q)\\(b,m)=1}}\beta_{lb}W_1\Big(\frac{lm}{M}\Big).$$ Note that if \begin{equation*}(b_1,m_1)=(b_2,m_2)=1\qquad \text{and}\qquad m_1\overline{b_1}\equiv m_2\overline{b_2}\equiv u(\text{mod}\ q),
\end{equation*} then $b_1m_2\equiv b_2m_1(\text{mod}\ q)$. Since $BM\ll q^{1-\eps}$, it follows that $b_1m_2=b_2m_1$, and hence $b_1=b_2$ and $m_1=m_2$. So given $u\in\mathcal{U}_l$, the existence of the pair $(b,m)$ in \eqref{condition} is unique. In particular we get
\begin{equation}\label{bdU}
\#\,\mathcal{U}_l\ll \frac{BM}{l^2}\qquad\text{and}\qquad \nu_l(u)\ll_\eps q^\eps.
\end{equation}

Given $0<X<q$, we call a pair $(u_1,u_2)$ ``$(l,X)$-bad'' if $u_1,u_2\in\mathcal{U}_l$ and 
\begin{align*}
u_1- u_2\equiv s(\text{mod}\ q)
\end{align*}
with $0<|s|\leq X$. Define $\mathcal{U}_l^{\text{bad}}\subset \mathcal{U}_l$ to be the set consisting of all $u$'s which belong to at least one such pair, and set $\mathcal{U}_l^\text{good}=\mathcal{U}_l\backslash\mathcal{U}_l^\text{bad}.$

 Let
\begin{align*}
Y_l=\#\big\{B/l\leq b_1,b_2\leq 2B/l,&\,M/l\leq m_1,m_2\leq 2M/l,\,0<|s|\leq X:\\
&(b_1,m_1)=(b_2,m_2)=1\ \text{and}\ m_1\overline{b_1}- m_2\overline{b_2}\equiv s(\text{mod}\ q)\big\}.
\end{align*}
We have
\begin{align*}
m_1\overline{b_1}- m_2\overline{b_2}\equiv s(\text{mod}\ q) &\Longleftrightarrow sb_1b_2+ b_1m_2-b_2m_1 \equiv 0(\text{mod}\ q)\\
&\Longleftrightarrow (sb_1-m_1)(sb_2+m_2)\equiv -m_1m_2(\text{mod}\ q).
\end{align*}
The facts that given $m_1, m_2$ and the product $(sb_1-m_1)(sb_2+m_2)$, the number of triples $(b_1,b_2,s)$ is $\ll_\eps q^\eps$, and that $(sb_1-m_1)(sb_2+m_2)\ll X^2(B/l)^2$ lead to
\begin{align*}
Y_l&\ll_\eps q^\eps \Big(\frac Ml\Big)^2\Big(1+\frac{X^2B^2}{l^2q}\Big)\ll_\eps q^\eps\Big(\frac{M^2}{l^2}+\frac{X^2B^2M^2}{l^4q}\Big).
\end{align*}
The same bound, hence, applies to $\#\,\mathcal{U}_l^\text{bad}$, and so
\begin{equation}\label{bdU1}
\sum_{u\in\mathcal{U}_l^\text{bad}}|\nu_l(u)|^2\ll_\eps q^\eps\Big(\frac{M^2}{l^2}+\frac{X^2B^2M^2}{l^4q}\Big).
\end{equation}

We write \eqref{Z1} as
\[
\mathcal{E}=\frac{\sqrt{N}}{q\sqrt{ABM}}\bigg(\sum_l\sum_{u\in\mathcal{U}_l^\text{bad}}+\sum_l\sum_{u\in\mathcal{U}_l^\text{good}}\bigg)=\frac{\sqrt{N}}{q\sqrt{ABM}}\big(Z_\text{bad}+Z_\text{good}\big),
\]
say. For $Z_\text{bad}$, we apply Cauchy's inequality, \eqref{bdU1} and the additive large sieve inequality (see [\textbf{\ref{mont}}; Corollary 2.2] or [\textbf{\ref{IK}}; Theorem 7.7]), obtaining
\begin{align}\label{bdZ1}
Z_\text{bad}&\ll\sum_l\sum_{u\in\mathcal{U}_l^\text{bad}}|\nu_l(u)|\bigg|\sum_{a}\sum_{0<|h|\leq H}\alpha_a\, e\Big(\frac{\pm ahu}{q}\Big)\widehat{W_2}\Big(\frac{hN}{q}\Big)\bigg|\nonumber\\
&\ll\sum_l\bigg(\sum_{u\in\mathcal{U}_l^\text{bad}}|\nu_l(u)|^2\bigg)^{1/2}\bigg(\sum_{u\in\mathcal{U}_l^\text{bad}}\bigg|\sum_{a}\sum_{0<|h|\leq H}\alpha_a\, e\Big(\frac{\pm ahu}{q}\Big)\widehat{W_2}\Big(\frac{hN}{q}\Big)\bigg|^2\bigg)^{1/2}\nonumber\\
&\ll_\eps q^\eps\sum_l \Big(\frac Ml+\frac{XBM}{l^2q^{1/2}}\Big)\Big(\big(q+AH\big)AH\Big)^{1/2}\ll_\eps q^{1+\eps} \big(M+q^{-1/2}XBM\big)\sqrt{\frac AN}.
\end{align}
For $Z_\text{good}$, by Cauchy's inequality and \eqref{bdU} we get
\begin{align*}
Z_\text{good}&\ll \sum_l\bigg(\sum_{u\in\mathcal{U}_l}|\nu_l(u)|^2\bigg)^{1/2}\bigg(\sum_{u\in\mathcal{U}_l^\text{good}}\bigg|\sum_{a}\sum_{0<|h|\leq H}\alpha_a\, e\Big(\frac{\pm ahu}{q}\Big)\widehat{W_2}\Big(\frac{hN}{q}\Big)\bigg|^2\bigg)^{1/2}\\
&\ll_\eps q^\eps\sum_l \frac{(BM)^{1/2}}{l}\bigg(\sum_{u\in\mathcal{U}_l^\text{good}}\bigg|\sum_{a}\sum_{0<|h|\leq H}\alpha_a\, e\Big(\frac{\pm ahu}{q}\Big)\widehat{W_2}\Big(\frac{hN}{q}\Big)\bigg|^2\bigg)^{1/2}.
\end{align*}
Note that for every $u_1,u_2\in\mathcal{U}_l^\text{good}$ we have $\|u_1/q-u_2/q\|>X/q$, where $\| \xi \|$ denotes the distance from $\xi \in \mathbb{R}$ to the nearest integer. Another application of the large sieve inequality yields
\begin{equation}\label{bdZ2}
Z_\text{good}\ll_\eps q^\eps \sum_l \frac{(BM)^{1/2}}{l}\Big(\big(q/X+AH\big)AH\Big)^{1/2}\ll_\eps q^{1+\eps} \Big((BM)^{1/2}X^{-1/2}+\frac{\sqrt{ABM}}{\sqrt{N}}\Big)\sqrt{\frac AN}.
\end{equation}
Combining \eqref{bdZ1}, \eqref{bdZ2} and choosing $X=(q/BM)^{1/3}$ we obtain
\[
\mathcal{E}\ll_\varepsilon \Big(\frac{q}{BM}\Big)^{-1/6+\eps}+q^\eps\Big(\sqrt{\frac AN}+\sqrt{\frac MB}\Big),
\]
and the result follows.
\end{proof}

For Theorem \ref{thm2} we need the following proposition instead of Proposition \ref{prop1}.

\begin{proposition}\label{prop3}
Assume the conditions of Proposition \ref{prop1} and that $ABMN\gg q^{2-\eps}$, $AM\ll BN$. Assume also that $\alpha=\eta*\lambda$, where the sequences $\eta_{a_1}$ and $\lambda_{a_2}$ are supported on the intervals $[A_1,2A_1]$ and $[A_2,2A_2]$ with $A=A_1A_2$, and satisfy $\eta_a,\lambda_a\ll a^\eps$. Then
\begin{equation*}
\mathcal S= \mathcal M_1^++\mathcal M_1^-+\mathcal E ,
\end{equation*}
where $\mathcal{M}_1^\pm$ are defined as in \eqref{mainM2} and
\begin{align*}
\mathcal{E}&\ll_\eps q^{-1+\eps}A^{1/4}B^{1/2}(AM+BN)(MN)^{-1/2}\\
&\qquad\qquad +q^{-3/4+\eps}A^{1/4}B^{1/2}(AM+BN)^{1/2}(MN)^{-1/4}(A_1+A_2)^{1/4}.
\end{align*}
\end{proposition}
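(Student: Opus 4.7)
The plan is to carry over the entire framework of Proposition~\ref{prop1} up through equation~\eqref{contS}, so that
\[
\mathcal{S} = \mathcal{M}_1^+ + \mathcal{M}_1^- + \mathcal{E} + O_C(q^{-C})
\]
with the same main terms $\mathcal{M}_1^\pm$ as defined in \eqref{mainM2}. The Mellin inversion in $W_2$, the Poisson summation in $m$, the isolation of the $g=0$ frequency (which recovers $\mathcal{M}_1^\pm$ unchanged), and the reduction of $\mathcal{E}$ to the integral~\eqref{boundE1} involving the exponential sum $Z_d(x)$ all proceed identically, since these steps only use the bounds $\alpha_a \ll a^\eps$ and the smoothness of $W_1, W_2$. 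None of them interacts with the factorization hypothesis on $\alpha$.

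The only novelty lies in the treatment of $Z_d(x)$, which is where Proposition~\ref{prop1} invoked Theorem~1 of [BC]. Here I would exploit the convolution structure by substituting $\alpha_{da} = \sum_{a_1 a_2 = da} \eta_{a_1}\lambda_{a_2}$, splitting $d$ multiplicatively as $d = d_1 d_2$ with $d_1 \mid a_1$ and $d_2 \mid a_2$, and using the multiplicativity $\overline{a_1 a_2} \equiv \bar a_1 \bar a_2 \pmod{b}$ (after removing the coprimality condition $(a_1a_2,b)=1$ by standard M\"obius/Rankin tricks at a cost of $q^\eps$) to rewrite the Kloosterman phase as $e(-qrg\, \bar a_1 \bar a_2/b)$. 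At this point $Z_d(x)$ is a trilinear sum in $(a_1,a_2,b)$ with Kloosterman fractions in two variables. I would then apply Theorem~2 of [DFI2] — a bilinear Kloosterman-fraction bound — in place of Theorem~1 of [BC]. The key advantage is that the DFI2 estimate produces a saving that depends on $A_1+A_2$ rather than on the product $A_1 A_2 = A$; this is precisely what yields the $(A_1+A_2)^{1/4}$ factor in the stated bound, while the prefactor $A^{1/4}B^{1/2}$ reflects the standard Cauchy--Schwarz/completion structure built into the DFI2 bound.

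Inserting the resulting estimate for $Z_d(x)$ into \eqref{boundE1}, estimating the $u,v,w,z$ contours using the rapid decay~\eqref{fhat}, performing the $x$-integration over $x \asymp dMN/(AM+BN)$, and summing in $d$ will then yield the two terms in the claimed bound for $\mathcal{E}$. The main obstacle will be bookkeeping uniformity: one has to verify the hypotheses of Theorem~2 of [DFI2] hold uniformly in the auxiliary parameters $r,g,u,v,w,z$ and the splittings $d=d_1d_2$, and that the $q^\eps$ losses from the $|u|,|v|,|w|,|z| \ll q^\eps$ truncation and from the M\"obius inversion do not spoil the final bound. The assumption $AM \ll BN$ is used, as in Proposition~\ref{prop1}, to fix the decision of eliminating $n$ and to keep the range $R$ of $r$ and the range $G$ of $g$ of the correct size for the bilinear bound to apply.
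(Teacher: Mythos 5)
Your setup is fine and matches the paper: the reduction to \eqref{contS}, the recovery of $\mathcal{M}_1^\pm$ from the $g=0$ frequency, the reduction of $\mathcal{E}$ to an integral of $|Z_d(x)|$ as in \eqref{boundE1}, and the substitution $\alpha=\eta*\lambda$ with the splitting $d=d_1d_2$ are all exactly what is done. The gap is in the key estimate. Theorem 2 of [\textbf{\ref{DFI2}}] is a bilinear bound with \emph{arbitrary} coefficients in the two variables that actually form the Kloosterman fraction; in $Z_d(x)$ those variables are $a$ (of length $\asymp A/d$) and the modulus $b$ (of length $\asymp B/d$), with the combined numerator $qrg$. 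Such a bound is completely blind to the factorization $a=a_1a_2$, so its saving is expressed in terms of $A$ and $B$, not $A_1+A_2$; your claim that it ``produces a saving that depends on $A_1+A_2$ rather than on the product $A_1A_2$'' has no basis, and indeed the paper uses Theorem 2 of [\textbf{\ref{DFI2}}] only as a weaker substitute for Theorem 1 of [\textbf{\ref{BC}}] in the arbitrary-coefficient Proposition \ref{prop1} (yielding $\kappa<1/2+1/526$ there). Nor can you apply a Duke--Friedlander--Iwaniec/Bettin--Chandee bound directly in the variables $(a_1,a_2)$: the phase is $e(-qrg\,\overline{a_1a_2}/b)$, whose denominator is $b$, not $a_2$, so it is not of the shape $e(\theta\overline{m}/n)$ in those variables; and fixing $a_1$ (or $r,g$) and summing the resulting bilinear bounds trivially over the remaining ranges is far too lossy to recover the stated error term.

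What is actually needed, and what the paper uses, is an estimate for the \emph{quadrilinear} sums that arise after the substitution, namely sums of the shape $\sum_{u\sim U}\sum_{k\leq K}\sum_{s\sim S}\sum_{t\sim T} c_u\gamma_k\eta_s\lambda_t\, e\big(ak\,\overline{st}/u\big)$. This is Lemma 6 of [\textbf{\ref{H-BJ}}] (essentially the Friedlander--Iwaniec mean-value theorem [\textbf{\ref{FI}}]), which gives the bound $q^\eps\, U(KST)^{3/4}\big(K^{1/4}+S^{1/4}+T^{1/4}\big)$. Applying it with $U\leftrightarrow B/d$, $K\leftrightarrow RG/d^2\asymp q^{\eps}(AM+BN)^2/(d^2qMN)$, $S\leftrightarrow A_1/d_1$, $T\leftrightarrow A_2/d_2$, and then inserting the result into \eqref{boundE1} (the factor $1/\sqrt{ABMN}$, the $x$-integral of length $\asymp dMN/(AM+BN)$, and the sum over $d$), produces precisely the two terms of the claimed bound: the $K^{1/4}$ term gives $q^{-1+\eps}A^{1/4}B^{1/2}(AM+BN)(MN)^{-1/2}$ and the $S^{1/4}+T^{1/4}$ terms give $q^{-3/4+\eps}A^{1/4}B^{1/2}(AM+BN)^{1/2}(MN)^{-1/4}(A_1+A_2)^{1/4}$. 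Without an input of this quadrilinear type, your argument cannot produce the $(A_1+A_2)^{1/4}$ factor, which is the whole point of Proposition \ref{prop3}. (A minor additional point: the coprimality $(a_1a_2,b)=1$ cannot be ``removed by M\"obius/Rankin tricks'' --- it is required for the phase $\overline{a_1a_2}\ (\mathrm{mod}\ b)$ to be defined, and it is part of the hypotheses of the lemma one applies.)
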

\begin{proof}
With $ABMN\gg q^{2-\eps}$ and $AM\ll BN$, we recall from \eqref{contS} that
\begin{equation*}
\mathcal{S}=\mathcal{M}_1^++\mathcal{M}_1^-+ \mathcal{E}+O_C\big(q^{-C}\big)
\end{equation*}
with
\begin{align}\label{E3}
\mathcal{E}\ll &\frac{1}{\sqrt{ABMN}}\sum_{d\leq R}\int_{(\varepsilon)}\int_{(\varepsilon)}\int_{(\varepsilon)}\int_{(\varepsilon)}\int_{x\asymp dMN/(AM+BN)}\nonumber\\
&\qquad\qquad x^{-\text{Re}(w)}\big|\widehat{f_\pm}(u,v,w,z)\big|\big|Z_d(x)\big|dxdudvdwdz,
\end{align}
where
\begin{equation*}
Z_d(x)=d^{w-z}\sum_{\substack{0<|r|\leq R/d\\0<|g|\leq G/d}}\sum_{(a,b)=d}a^{-u}b^{-(v+w)}r^{-z}\alpha_{a}\beta_{b}\,e\Big(\frac{-qrg\overline{a/d}}{b/d}+gx\Big)W_1\Big(\frac{bx}{dM}\Big).
\end{equation*}
Using the fact that $\alpha=\eta*\lambda$ we can write
\begin{align*}
Z_d(x)&=d^{-(u+v+z)}\sum_{d=d_1d_2}\\
&\qquad\ \sum_{\substack{0<|r|\leq R/d\\0<|g|\leq G/d}}\sum_{\substack{a_1,a_2,b\\(a_1a_2,b)=1\\(a_1,d_2)=1}}(a_1a_2)^{-u}b^{-(v+w)}r^{-z}\eta_{d_1a_1}\lambda_{d_2a_2}\beta_{db}\,e\Big(\frac{-qrg\overline{a_1a_2}}{b}+gx\Big)W_1\Big(\frac{bx}{M}\Big).
\end{align*}
Applying Lemma 6 of [\textbf{\ref{H-BJ}}], which is essentially the same as the result of [\textbf{\ref{FI}}], to the above expression with
\[
U\leftrightarrow \frac Bd,\qquad K\leftrightarrow \frac{RG}{d^2}\asymp\frac{ q^\eps (AM+BN)^2}{d^2qMN},\qquad S\leftrightarrow \frac{A_1}{d_1}\qquad \text{and}\qquad T\leftrightarrow \frac{A_2}{d_2}
\]
leads to
\begin{align*}
Z_d(x)&\ll_\eps q^\eps \sum_{d=d_1d_2}U(KST)^{3/4}\big(K^{1/4}+S^{1/4}+T^{1/4}\big)\\
&\ll_\eps d^{-13/4}q^{-3/4+\eps} \sum_{d=d_1d_2}\frac{ A^{3/4}B(AM+BN)^{3/2}}{(MN)^{3/4}}\Big(\frac{(AM+BN)^{1/2}}{d^{1/2}(qMN)^{1/4}}+\frac{A_1^{1/4}}{d_{1}^{1/4}}+\frac{A_2^{1/4}}{d_{2}^{1/4}}\Big).
\end{align*}
Plugging this into \eqref{E3} we obtain the proposition.
\end{proof}

The following proposition is required for Theorem \ref{thm3}.

\begin{proposition}\label{prop4}
Assume the conditions of Proposition \ref{prop3}. Also assume that $\alpha=\eta*\lambda$, $\beta=\nu*\xi$, where the sequences $\eta_{a_1},\lambda_{a_2},\nu_{b_1}, \xi_{b_2}$ are supported on the intervals $[A_1,2A_1],[A_2,2A_2],[B_1,2B_1]$ and $[B_2,2B_2]$ with $A=A_1A_2$, $B=B_1B_2$% and $A_1,B_1\ll q^{1/2+\eps}$
, and satisfy $\eta_a,\lambda_a\ll a^\eps$, $\nu_b,\xi_b\ll b^\eps$. Suppose further that
\[
\eta_{a_1}=\eta\Big(\frac{a_1}{A_1}\Big)\qquad\text{and}\qquad \nu_{b_1}=\nu\Big(\frac{b_1}{B_1}\Big),
\] 
where $\eta$ and $\nu$ are smooth functions supported in $[1,2]$ such that $\eta^{(j)},\nu^{(j)}\ll_j q^{\eps}$ for any fixed $j\geq0$. Then
\begin{equation*}
\mathcal S= \mathcal M_1^++\mathcal M_1^-+\mathcal E ,
\end{equation*}
where $\mathcal{M}_1^\pm$ are defined as in \eqref{mainM2} and
\begin{align*}
\mathcal{E}\ll_\eps&\, q^{\vartheta-1/2+\eps}(AB)^{-3/4}(AM+BN)^{3/2}(MN)^{-3/4}\\
&\qquad\qquad\Big(A_2B_2+\frac{(AM+BN)^2}{qMN}\Big)^{1/2}\Big(A_2B_2+\frac{B}{A_1}\Big)^{1/2}
\end{align*}
with $\vartheta=7/64$.
\end{proposition}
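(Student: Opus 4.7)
My plan is to mirror the proof of Proposition \ref{prop3}, starting from the same post-Poisson expression \eqref{E3}: we already have
\[
\mathcal{S}=\mathcal{M}_1^++\mathcal{M}_1^-+\mathcal{E}+O_C(q^{-C}),
\]
with $\mathcal{E}$ controlled by an integral over contours of the quantity $|Z_d(x)|$, where
\[
Z_d(x)=d^{w-z}\sum_{\substack{0<|r|\leq R/d\\0<|g|\leq G/d}}\sum_{(a,b)=d}a^{-u}b^{-(v+w)}r^{-z}\alpha_a\beta_b\,e\!\left(\frac{-qrg\,\overline{a/d}}{b/d}+gx\right)W_1\!\left(\frac{bx}{dM}\right).
\]
The additional structure we now have is the convolution factorization $\alpha=\eta*\lambda$, $\beta=\nu*\xi$, with smooth coefficients on the variables $a_1$ and $b_1$. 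This allows us to expand
\[
Z_d(x)=d^{-(u+v+z)}\sum_{d=d_1d_2}\sum_{d=e_1e_2}\sum_{\substack{0<|r|\leq R/d\\0<|g|\leq G/d}}\sum_{\substack{a_1,a_2,b_1,b_2\\(a_1a_2,b_1b_2)=1,\,\text{coprimality cond.}}} \eta\!\Big(\tfrac{d_1a_1}{A_1}\Big)\nu\!\Big(\tfrac{e_1b_1}{B_1}\Big)\lambda_{d_2a_2}\xi_{e_2b_2}\,(\cdots)\,e\!\left(\frac{-qrg\,\overline{a_1a_2}}{b_1b_2}+gx\right),
\]
in which both $a_1$ and $b_1$ carry smooth weights.

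The decisive step is to bound the resulting sum of Kloosterman fractions via Watt's theorem [\textbf{\ref{W}}], which invokes the Kuznetsov formula and the bound $\vartheta=7/64$ of Kim--Sarnak toward the Ramanujan conjecture for Maass forms on $\mathrm{GL}(2)$. With the smooth variables $a_1$ (of size $\asymp A_1/d_1$) and $b_1$ (of size $\asymp B_1/e_1$) playing the role of the two ``long'' variables in Watt's setup, and the remaining variables $a_2,b_2,r,g$ (together carrying the non-smooth weights $\lambda_{d_2a_2},\xi_{e_2b_2}$ and the arbitrary coefficients from $r,g$) playing the role of the ``short'' variables, Watt's bound will produce an estimate essentially of the form
\[
Z_d(x)\ll_\eps q^{\vartheta+\eps}\,d^{-\text{(power)}}\Big(\tfrac{A_2B_2}{d}+\tfrac{(AM+BN)^2}{d^2qMN}\Big)^{1/2}\Big(\tfrac{A_2B_2}{d}+\tfrac{B}{dA_1}\Big)^{1/2}\cdot\frac{(AB)^{1/4}(AM+BN)^{1/2}}{(MN)^{1/4}},
\]
where the two parenthesized factors reflect, respectively, the cardinality of the ``short'' variables together with the Poisson dual contribution, and the contribution of the modulus $b_1b_2$ against the length $A_1$ of the smooth variable $a_1$ (cf.\ the statement of Watt's theorem quoted before Theorem \ref{thm3}, with $A_2$ there corresponding to our $A_2B_2$ plus the dual terms).

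Plugging this bound into the contour-integral expression \eqref{E3} and using \eqref{fhat} to restrict $|u|,|v|,|w|,|z|\ll q^\eps$, summing the geometric series in $d$ (the exponent of $d$ being negative, so the $d=1$ term dominates), and invoking $x\asymp dMN/(AM+BN)$ in the $x$-integration yields the claimed bound on $\mathcal{E}$. The main obstacle is a careful application of Watt's theorem in the present four-variable setup: one must verify that the coprimality conditions $(a_1a_2,b_1b_2)=1$ together with the coprimality of $a_1$ with $d_2$ and $b_1$ with $e_2$ can be disentangled by M\"obius inversion without destroying the smoothness of $\eta$ and $\nu$, and one must correctly match the parameters $U,K,S,T$ of Watt's theorem (analogous to the application of [\textbf{\ref{H-BJ}}, Lemma~6] in the proof of Proposition \ref{prop3}) so that the exponent $\vartheta$ appears in the right place and the remaining terms assemble into the stated double-root combination $(A_2B_2+\cdots)^{1/2}(A_2B_2+B/A_1)^{1/2}$.
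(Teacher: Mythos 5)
There is a genuine gap at the decisive step. Watt's result as used here (Lemma 4.1 of [\textbf{\ref{W}}], quoted in the proof) is a bound for sums of \emph{complete} Kloosterman sums $S(qu\overline{r},w;ns)$ in which exactly one variable, namely the variable $n$ sitting inside the modulus, carries a smooth weight; it is not a bound for the four-variable sum of Kloosterman \emph{fractions} $e\big(-qrg\overline{a_1a_2}/(b_1b_2)\big)$ with the two smooth variables $a_1$ and $b_1$ that you propose to feed into it. The missing idea, which is the heart of the argument, is to first apply Poisson summation in the smooth numerator variable $a_1$ (splitting the sum over $a_1$ with $(a_1,b_1b_2)=1$ into residue classes modulo $b_1b_2$). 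This produces complete Kloosterman sums $S(qrg\overline{a_2},k;b_1b_2)$ together with a dual variable $k$ that can be truncated to $|k|\leq K\asymp q^{\eps}B/(d_2A_1)$; this truncation is precisely the source of the factor $B/A_1$ in the second parenthesis of the stated bound, which in your sketch appears only as an unexplained ``contribution of the modulus against the length $A_1$''. One must also handle the zero frequency $k=0$ separately, via $|S(qrg\overline{a_2},0;b_1b_2)|\leq (qrg,b_1b_2)$, which yields an additional term $\ll_\eps d^{-3}q^{-1+\eps}A(AM+BN)^2/(MN)$ absent from your outline and needed before the final assembly. Only after this transformation, with $b_1$ the single smooth variable inside the modulus $b_1b_2$, does Watt's lemma apply, with the matching $R\leftrightarrow A_2/d_2$, $S\leftrightarrow B_2/d_4$, $U\leftrightarrow RG/d^2$, $W\leftrightarrow K$, $N\leftrightarrow B_1/d_3$.

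Your proposal explicitly defers exactly these points (``one must correctly match the parameters \dots so that the exponent $\vartheta$ appears in the right place''), so the asserted intermediate bound for $Z_d(x)$ is not justified as written; without the Poisson step there is no sum of Kloosterman sums for Watt's spectral bound (Kuznetsov plus Kim--Sarnak) to act on. Two further remarks: the coprimality conditions are not the real obstacle --- the condition $(a_1,b_1b_2)=1$ is exactly what allows the residue classes to reassemble into $S(qrg\overline{a_2},k;b_1b_2)$, and no M\"obius disentangling threatening the smoothness of $\eta$ is needed; and the choice of completion matters, since (as the paper notes) estimating the $a_1$-sum by Weil's bound instead of Poisson plus Watt would already lose enough to weaken the final exponent.
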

\begin{proof}
We proceed from \eqref{contS},
\begin{equation*}
\mathcal{S}=\mathcal{M}_1^++\mathcal{M}_1^-+ \mathcal{E}+O_C\big(q^{-C}\big)
\end{equation*}
with
\begin{align}\label{E4}
\mathcal{E}\ll &\frac{1}{\sqrt{ABMN}}\sum_{d\leq R}\int_{(\varepsilon)}\int_{(\varepsilon)}\int_{(\varepsilon)}\int_{(\varepsilon)}\int_{x\asymp dMN/(AM+BN)}\nonumber\\
&\qquad\qquad x^{-\text{Re}(w)}\big|\widehat{f_\pm}(u,v,w,z)\big|\big|Z_d(x)\big|dxdudvdwdz,
\end{align}
where
\begin{equation*}
Z_d(x)=d^{w-z}\sum_{\substack{0<|r|\leq R/d\\0<|g|\leq G/d}}\sum_{(a,b)=d}a^{-u}b^{-(v+w)}r^{-z}\alpha_{a}\beta_{b}\,e\Big(\frac{-qrg\overline{a/d}}{b/d}+gx\Big)W_1\Big(\frac{bx}{dM}\Big).
\end{equation*}
Since $\alpha=\eta*\lambda$ and $\beta=\nu*\xi$, we get
\begin{align*}
Z_d(x)&=d^{-(u+v+z)}\sum_{d=d_1d_2=d_3d_4}\sum_{\substack{0<|r|\leq R/d\\0<|g|\leq G/d}}\sum_{\substack{a_1,a_2,b_1,b_2\\(a_1a_2,b_1b_2)=1\\(d_1,a_2)=(d_3,b_2)=1}}(a_1a_2)^{-u}(b_1b_2)^{-(v+w)}r^{-z}\\
&\qquad\qquad \eta\Big(\frac{d_1a_1}{A_1}\Big)\nu\Big(\frac{d_3b_1}{B_1}\Big)\lambda_{d_2a_2}\xi_{d_4b_2}\,e\Big(\frac{-qrg\overline{a_1a_2}}{b_1b_2}+gx\Big)W_1\Big(\frac{b_1b_2x}{M}\Big).
\end{align*}

%First, by Weil's bound and partial summation we have
%\begin{align*}
%&\sum_{(a_1,b_1b_2)=1}a_1^{-u}\,\eta\Big(\frac{d_1a_1}{A_1}\Big)e\Big(\frac{-qrg\overline{a_1a_2}}{b_1b_2}\Big)\ll_\eps q^\eps (b_1b_2)^{1/2}(qrg,b_1b_2)\Big(\frac{A_1}{d_1b_1b_2}+1\Big).
%\end{align*}
%So
%\begin{align}\label{bdZd1}
%Z_d(x)&\ll_\eps q^\eps\sum_{d=d_1d_2} \frac{RGA_2}{d^2d_2}\Big(\frac{B}{d}\Big)^{1/2}\Big(\frac{A_1}{d_1}+\frac{B}{d}\Big)\nonumber\\
%&\ll_\eps d^{-7/2}q^{-1+\eps}\frac{A_2B^{1/2}(AM+BN)^2(A_1+B)}{MN}.
%\end{align}

Note that if we apply Weil's bound on the sum over $a_1$ here, we would obtain a slightly weaker result. Instead we apply the Poisson summation formula over $a_1$ and obtain
\begin{align*}
&\sum_{(a_1,b_1b_2)=1}a_1^{-u}\,\eta\Big(\frac{d_1a_1}{A_1}\Big)e\Big(\frac{-qrg\overline{a_1a_2}}{b_1b_2}\Big)={\sum_{x(\text{mod}\ b_1b_2)}}^{\!\!\!\!\!\!\!\!\!*}\ \ e\Big(\frac{-qrg\overline{xa_2}}{b_1b_2}\Big)\sum_{a_1\equiv x(\text{mod}\ b_1b_2)}a_1^{-u}\,\eta\Big(\frac{d_1a_1}{A_1}\Big)\\
&\qquad\qquad=\sum_{k\in\mathbb{Z}}S(qrg\overline{a_2},k;b_1b_2)\int (b_1b_2y)^{-u}\,\eta\Big(\frac{d_1b_1b_2y}{A_1}\Big)e(ky)dy.
\end{align*}
The integral is over $y\asymp d_2A_1/B$, and as before we may restrict the sum to $|k|\leq K$, where
\[
K=\frac{q^\eps B}{d_2A_1}.
\]
Since $|S(qrg\overline{a_2},0;b_1b_2)|\leq (qrg,b_1b_2)$, the contribution of the term $k=0$ to $Z_d(x)$ is
\begin{align*}
&\ll_\eps \frac{q^\eps A_1}{B}  \sum_{d=d_1d_2=d_3d_4}\sum_{\substack{0<|r|\leq R/d\\0<|g|\leq G/d}}\sum_{\substack{a_2,b_1,b_2\\(a_2,b_1b_2)=1\\(d_1,a_2)=(d_3,b_2)=1}} \Big|\nu\Big(\frac{d_3b_1}{B_1}\Big)\lambda_{d_2a_2}\xi_{d_4b_2}\Big|(rg,b_1b_2)d_2\\
&\ll_\eps d^{-3}q^\eps ARG\ll_\eps d^{-3}q^{-1+\eps}\frac{A(AM+BN)^2}{MN}.
\end{align*}
Hence
\begin{equation}\label{Zd4}
Z_d(x)\ll_\eps q^\eps \sum_{d=d_1d_2=d_3d_4}\int_{y\asymp d_2A_1/B}y^{-\text{Re}(u)}|R(y)|dy+d^{-3}q^{-1+\eps}\frac{A(AM+BN)^2}{MN},
\end{equation}
where
\begin{align}\label{Ry}
R(y)&=\sum_{\substack{0<|r|\leq R/d\\0<|g|\leq G/d}}\sum_{\substack{a_2,b_1,b_2\\(a_2,b_1b_2)=1\\(d_1,a_2)=(d_3,b_2)=1}}\sum_{0< |k|\leq K}S(qrg\overline{a_2},k;b_1b_2)\\
&\qquad \qquad a_2^{-u}(b_1b_2)^{-(u+v+w)}r^{-z}\nu\Big(\frac{d_3b_1}{B_1}\Big)\lambda_{d_2a_2}\xi_{d_4b_2}W_1\Big(\frac{b_1b_2x}{M}\Big)\eta\Big(\frac{d_1b_1b_2y}{A_1}\Big)e(gx+ky).\nonumber
\end{align}

We now use the following result, which is a special case of Lemma 4.1 of Watt [\textbf{\ref{W}}].

\begin{lemma}
Let $1\leq N, R,S,U,W<q$, $Z\in\mathbb{R}$ and let $\alpha_r,\beta_s,\gamma_u$ be three sequences of complex numbers supported on $[R,2R], [S,2S]$ and $[1,U]$, and satisfy $\alpha_r\ll R^\eps,\beta_s\ll S^\eps,\gamma_u\ll U^\eps$. For any $r\in[R,2R]$ and $s\in[S,2S]$, suppose that $F_{r,s}$ is a five times continuously differentiable function supported in $[1,2]$ such that $F_{r,s}^{(j)}\ll_j q^\eps$ for any $0\leq j\leq 5$. Then
\begin{align*}
&\sum_{(r,s)=1}\sum_{\substack{0<u\leq U\\0<w\leq W}}\sum_{n}\alpha_r\beta_s\gamma_u e(Zw)F_{r,s}\Big(\frac{n}{N}\Big)S(qu\overline{r},w;ns)\\
&\qquad\qquad\ll_\eps q^\vartheta\big(1+ZW\big)NS\sqrt{RUW}\big(RS+U\big)^{1/2}\big(RS+W\big)^{1/2}\big(X^{2\vartheta}+X^{-3/2}\big),
\end{align*}
where $\vartheta=7/64$ and
\[
X=\frac{NS\sqrt{R}}{\sqrt{qUW}}.
\]
\end{lemma}

Separating the variables $b_1$, $b_2$ in $W_1$ and $\eta$ in \eqref{Ry} using their Mellin transforms, and then applying the above lemma with
\[
R\leftrightarrow \frac{A_2}{d_2},\qquad S\leftrightarrow \frac{B_2}{d_4},\qquad U\leftrightarrow \frac{RG}{d^2}\asymp\frac{ q^\eps (AM+BN)^2}{d^2qMN},\qquad W\leftrightarrow K\asymp \frac{q^\eps B}{d_2A_1},
\]
\[
 N\leftrightarrow \frac{B_1}{d_3},\qquad Z\leftrightarrow \frac{d_2A_1}{B} \qquad \text{and}\qquad X\leftrightarrow \frac{\sqrt{ABMN}}{AM+BN}\ (\ll 1),
\]
we get
\begin{align*}
R(y)&\ll_\eps q^{\vartheta+\eps}\frac{B}{d}\sqrt{\frac{(AM+BN)^2A_2B}{d^2d_2^2qA_1MN}}\Big(\frac{A_2B_2}{d_2d_4}+\frac{(AM+BN)^2}{d^2qMN}\Big)^{1/2}\\
&\qquad\qquad\Big(\frac{A_2B_2}{d_2d_4}+\frac{B}{d_2A_1}\Big)^{1/2}\frac{(AM+BN)^{3/2}}{(ABMN)^{3/4}}\\
&\ll_\eps d^{-2}q^{\vartheta-1/2+\eps}\frac{B}{d_2A_1}\frac{(AM+BN)^{5/2}}{(AB)^{1/4}(MN)^{5/4}}\\
&\qquad\qquad\Big(\frac{A_2B_2}{d_2d_4}+\frac{(AM+BN)^2}{d^2qMN}\Big)^{1/2}\Big(\frac{A_2B_2}{d_2d_4}+\frac{B}{d_2A_1}\Big)^{1/2}.
\end{align*}
Plugging this into \eqref{Zd4} yields
\begin{align*}
Z_d(x)&\ll_\eps d^{-2}q^{\vartheta-1/2+\eps}\frac{(AM+BN)^{5/2}}{(AB)^{1/4}(MN)^{5/4}}\Big(A_2B_2+\frac{(AM+BN)^2}{d^2qMN}\Big)^{1/2}\Big(A_2B_2+\frac{B}{A_1}\Big)^{1/2}\nonumber\\
&\qquad\qquad+d^{-3}q^{-1+\eps}\frac{A(AM+BN)^2}{MN},
\end{align*}
and hence
\begin{align*}
\mathcal{E}&\ll_\eps q^{\vartheta-1/2+\eps}\frac{(AM+BN)^{3/2}}{(AB)^{3/4}(MN)^{3/4}}\Big(A_2B_2+\frac{(AM+BN)^2}{qMN}\Big)^{1/2}\Big(A_2B_2+\frac{B}{A_1}\Big)^{1/2}.
\end{align*}
This completes the proof of the proposition.
\end{proof}

\section{Proof of Theorem \ref{mthm}}\label{smthm}

We proceed from \eqref{M+} and start the evaluation of the off-diagonal terms, $S_{\alpha,\beta}^{+}$. We assume that the sequences $\alpha_a,\beta_b$ are supported on $[A,2A]$ and $[B,2B]$. We remark that our main term analysis is inspired by the nice papers of Young [\textbf{\ref{Y}}] and Zacharias [\textbf{\ref{Z}}].

\subsection{Partition into dyadic intervals}

We first apply a dyadic partition of unity to the sums over $m$ and $n$. Let $W$ be a smooth non-negative function supported in $[1, 2]$ such that
$$
\sum_{M} W \Big ( \frac{x}{M} \Big ) = 1,
$$
where $M$ runs over a sequence of real numbers with $\#\{M: X^{-1}\leq M\leq X\}\ll \log X$. With this partition of unity, we write
\begin{align*}
S_{\alpha,\beta}^{+}=\sum_{M,N} S_{\alpha,\beta}^{+}(M,N),
\end{align*}
where
\begin{align*}
S_{\alpha,\beta}^{+}(M,N)=\sum_{\substack{am\equiv\pm bn(\text{mod}\ q)\\am\ne bn}}\frac{\alpha_a\beta_b}{\sqrt{ab}m^{1/2+\alpha}n^{1/2+\beta}}\,W\Big ( \frac{m}{M} \Big )W \Big ( \frac{n}{N} \Big )V_{+}\Big(\frac{\pi mn}{q}\Big).
\end{align*}
Due to the rapid decay of $V_+$ in Remark \ref{rmkV}, we may assume that $MN\ll q^{1+\eps}$.

Separating the variables $m$ and $n$ in $V_+$ using \eqref{Vpm}, we have
\begin{equation}\label{mainexS}
S_{\alpha,\beta}^{+}(M,N)=\frac{1}{2\pi i}\int_{(\eps)}X_+(s)\Big(\frac q\pi\Big)^s\sum_{\substack{am\equiv\pm bn(\text{mod}\ q)\\am\ne bn}}\frac{\alpha_a\beta_b}{\sqrt{ab}m^{1/2+\alpha+s}n^{1/2+\beta+s}}W\Big ( \frac{m}{M} \Big )W \Big ( \frac{n}{N} \Big )\frac{ds}{s}.
\end{equation}
We now apply Propositions \ref{prop1} and \ref{prop2} to the above sum: given some small $\delta_0>0$, let
\begin{align*}
&\mathcal{A}_1=\big\{(M,N): ABMN\ll q^{2-2\delta_0}\big\},\\
&\mathcal{A}_{2,<}=\big\{(M,N): ABMN\gg q^{2-2\delta_0},BM\ll q^{1-6\delta_0+\eps},A\ll q^{-2\delta_0}N,B\gg q^{2\delta_0}M\big\},\\
&\mathcal{A}_{2,>}=\big\{(M,N): ABMN\gg q^{2-2\delta_0},AN\ll q^{1-6\delta_0+\eps},A\gg q^{2\delta_0}N,B\ll q^{-2\delta_0}M\big\},\\
&\mathcal{A}_3=\big\{(M,N): MN\ll q^{1+\eps}\big\}\backslash\big(\mathcal{A}_1\cup \mathcal{A}_{2,<}\cup \mathcal{A}_{2,>}\big).
\end{align*}
If $(M,N)\in\mathcal{A}_1$ we apply \eqref{prop1-1} of Proposition \ref{prop1}; if $(M,N)\in\mathcal{A}_{2,<}\cup \mathcal{A}_{2,>}$ we apply Proposition \ref{prop2}; and in the remaining case we apply \eqref{prop1-2} of Proposition \ref{prop1}. Then we obtain
\begin{align*}
S_{\alpha,\beta}^{+}(M,N)&=\mathds{1}_{(M,N)\in\mathcal{A}_3}\big(\mathcal{M}_1^+(M,N)+\mathcal{M}_1^-(M,N)\big)\\
&\qquad\qquad+\mathds{1}_{(M,N)\in\mathcal{A}_{2,<}}\mathcal{M}_{2,<}(M,N)+\mathds{1}_{(M,N)\in \mathcal{A}_{2,>}}\mathcal{M}_{2,>}(M,N)+\mathcal{E}(M,N)
\end{align*}
with obvious meaning. More work on the secondary main terms $\mathcal{M}_1^\pm(M,N)$ and $\mathcal{M}_{2,\gtrless}(M,N)$ is required before we can make any use of them. We shall do that in subsections \ref{secondarymt} and \ref{secondarymt2} after showing in the next subsection that the error term $\mathcal{E}(M,N)$ is acceptable.

\subsection{The error term $\mathcal{E}(M,N)$}\label{errorterm}

Without loss of generality, we may assume that $M\ll N$. From \eqref{prop1-1} of Proposition \ref{prop1} and Proposition \ref{prop2}, the error term is $O_\eps(q^{-\delta_0+\eps})$ if $ABMN\ll q^{2-2\delta_0}$, or if 
\begin{equation}\label{cond2}
BM\ll q^{1-6\delta_0+\eps}, \qquad A\ll q^{-2\delta_0}N\qquad\text{and}\qquad B\gg q^{2\delta_0}M.
\end{equation} 
We now consider the remaining case $(M,N)\in\mathcal{A}_3$. Note that if $N\gg q^{\kappa+6\delta_0}$, then $M\ll_\eps q^{1-\kappa-6\delta_0+\eps}$, and all the conditions in \eqref{cond2} are satisfied. So we may assume that $N\ll q^{\kappa+6\delta_0}$. In that case, provided that $\kappa<6/11$ (which we subsequently assume), the first term in \eqref{bdE1} in Proposition \ref{prop1} dominates the second term, i.e.
\begin{align*}
\mathcal{E}(M,N)&\ll_\eps q^{-17/20+\eps}(AB)^{-3/20}(AM+BN)^{17/10}(A+B)^{1/4}(MN)^{-17/20}\\
&\ll _\eps q^{-17/20+\eps}\big(A/B\big)^{3/20}A^{33/20}+q^{-17/20+\eps}A^{31/20}B^{1/10}\\
&\quad +q^{-17/20+\eps}A^{1/10}B^{31/20}\big(N/M)^{17/20}+q^{-17/20+\eps}\big(B/A\big)^{3/20}B^{33/20}\big(N/M)^{17/20}.
\end{align*}
 
We divide into two cases:\\
Case 1: $BM\gg q^{1-6\delta_0+\eps}$ or $B\ll q^{2\delta_0}M$. Then $M\gg q^{1-\kappa-6\delta_0}$, and so $1\leq N/M\ll q^{2\kappa+12\delta_0-1+\eps}$. Note that as $ABMN\gg q^{2-2\delta_0}$, it follows that $q^{1-2\kappa-2\delta_0-\eps}\ll A/B\ll q^{2\kappa+2\delta_0-1+\eps}$, and hence
\begin{align*}
\mathcal{E}(M,N)&\ll _\eps q^{-17/20+3(2\kappa+2\delta_0-1)/20+33\kappa/20+17(2\kappa+12\delta_0-1)/20+\eps}=q^{-37/20+73\kappa/20+21\delta_0/2+\eps};
\end{align*}
Case 2: $A\gg q^{-2\delta_0}N$. As $M\gg q^{2-2\delta_0}/ABN$ we have
\begin{align*}
\mathcal{E}(M,N)&\ll _\eps q^{-17/20+3(2\kappa+2\delta_0-1)/20+33\kappa/20+\eps}+q^{-51/20+17\delta_0/10+\eps}A^{19/20}B^{12/5}N^{17/10}\\
&\qquad\qquad+q^{-51/20+17\delta_0/10+\eps}A^{7/10}B^{53/20}N^{17/10}\\
&\ll_\eps q^{-1+39\kappa/20+3\delta_0/10+\eps}+q^{-51/20+101\kappa/20+51\delta_0/10+\eps}.
\end{align*}

Summing up we obtain
\[
\mathcal{E}(M,N)\ll_\eps q^{-\delta_0+\eps}+q^{-37/20+73\kappa/20+21\delta_0/2+\eps}+q^{-1+39\kappa/20+3\delta_0/10+\eps}+q^{-51/20+101\kappa/20+51\delta_0/10+\eps},
\]
 provided that $\kappa<6/11$. We hence conclude that $\mathcal{E}(M,N)\ll_\eps q^{-\delta_0+\eps}$ for some $\delta_0>0$ if $\kappa<51/101$.
%Setting $-20\delta_0=-51+101\kappa+68\delta_0$ we see that the error term is $O_\eps\big(q^{-(51-101\kappa)/88+\eps}\big)$. %Thus, $\kappa<1/2+1/202$ is admissible. \hcom{Note that the critical case here is when $A\asymp B\asymp M\asymp q^{1/2+\delta}$ and $N\asymp q^{1/2-3\delta}$.}

\subsection{The secondary main terms $\mathcal{M}_1^\pm(M,N)$}\label{secondarymt}

For $(M,N)\in\mathcal{A}_3$, from \eqref{prop1-2} of Proposition \ref{prop1} we have
\begin{align*}
\mathcal{M}_1^+(M,N)&=\sum_{r\ne0}\sum_{\substack{d\\(a,b)=1}}\frac{\alpha_{da}\beta_{db}}{d\sqrt{ab}}\frac{1}{2\pi i}\int_{(\eps)} X_+(s)\Big(\frac q\pi\Big)^s\\
&\qquad \int(bx)^{-(1/2+\alpha+s)}\big(ax-qr/b\big)^{-(1/2+\beta+s)}\,W\Big ( \frac{bx}{M} \Big )W \Big ( \frac{abx-qr}{bN} \Big )dx\frac{ds}{s}
\end{align*}
and
\begin{align*}
\mathcal{M}_1^-(M,N)&=\sum_{r\ne0}\sum_{\substack{d\\(a,b)=1}}\frac{\alpha_{da}\beta_{db}}{d\sqrt{ab}}\frac{1}{2\pi i}\int_{(\eps)} X_+(s)\Big(\frac q\pi\Big)^s\\
&\qquad \int(bx)^{-(1/2+\alpha+s)}\big(qr/b-ax\big)^{-(1/2+\beta+s)}\,W\Big ( \frac{bx}{M} \Big )W \Big ( \frac{qr-abx}{bN} \Big )dx\frac{ds}{s}.
\end{align*}
Writing $W$ in terms of its Mellin transform we get
\begin{align*}
&\mathcal{M}_1^+(M,N)=\frac{1}{(2\pi i)^3}\int_{(\eps)}\int_{(c_2)}\int_{(c_1)} X_+(s)\widetilde{W}(u)\widetilde{W}(v)\Big(\frac q\pi\Big)^sM^uN^v\\
&\qquad\quad \sum_{r\ne0}\sum_{\substack{d\\(a,b)=1}}\frac{\alpha_{da}\beta_{db}}{da^{1+\beta+s+v}b^{1+\alpha+s+u}}\int x^{-(1/2+\alpha+s+u)}\big(x-qr/ab\big)^{-(1/2+\beta+s+v)}\,dxdudv\frac{ds}{s}.
\end{align*}
Note that if $r>0$ then the integral over $x$ is restricted to $x>qr/ab$, and if $r<0$ then it is restricted to $x>0$. For absolute convergence, we also need to impose the conditions
\begin{equation}\label{contour1}\begin{cases}
\text{Re}(\alpha+\beta+2s+u+v)>0,\ \text{Re}(\beta+s+v)<1/2 & \quad\text{if }r>0,\\
\text{Re}(\alpha+\beta+2s+u+v)>0,\ \text{Re}(\alpha+s+u)<1/2 & \quad\text{if }r<0.
\end{cases}\end{equation} Under these assumptions, the $x$-integral is equal to (see, for instance, 17.43.21 and 17.43.22 of [\textbf{\ref{GR}}])
\[
\Big(\frac {q|r|}{ab}\Big)^{-(\alpha+\beta+2s+u+v)}\times\begin{cases}
\frac{\Gamma(\alpha+\beta+2s+u+v)\Gamma(1/2-\beta-s-v)}{\Gamma(1/2+\alpha+s+u)} & \quad\text{if }r>0,\\
\frac{\Gamma(\alpha+\beta+2s+u+v)\Gamma(1/2-\alpha-s-u)}{\Gamma(1/2+\beta+s+v)} & \quad\text{if } r<0.
\end{cases}
\]
 Hence
\begin{align*}
\mathcal{M}_1^+(M,N)&=\sum_{r\geq1}\sum_{\substack{d\\(a,b)=1}}\frac{\alpha_{da}\beta_{db}}{dab}\frac{1}{(2\pi i)^3}\int_{(c_2)}\int_{(c_1)} \widetilde{W}(u)\widetilde{W}(v)M^uN^v\\
&\qquad \int_{(\eps)} X_+(s)H^+(s)\Big(\frac q\pi\Big)^s\Big(\frac {q}{a}\Big)^{-(\alpha+s+u)}\Big(\frac {q}{b}\Big)^{-(\beta+s+v)}r^{-(\alpha+\beta+2s+u+v)}\,\frac{ds}{s}dudv,
\end{align*}
where
\[
H^+(s)=\Gamma(\alpha+\beta+2s+u+v)\bigg(\frac{\Gamma(1/2-\beta-s-v)}{\Gamma(1/2+\alpha+s+u)}+\frac{\Gamma(1/2-\alpha-s-u)}{\Gamma(1/2+\beta+s+v)}\bigg).
\]

A similar formula holds for $\mathcal{M}_1^-(M,N)$ with $H^+(s)$ being replaced by $H^-(s)$, where
\[
H^-(s)=\frac{\Gamma(1/2-\alpha-s-u)\Gamma(1/2-\beta-s-v)}{\Gamma(1-\alpha-\beta-2s-u-v)},
\]
and the imposed conditions for the absolute convergence are
\begin{equation}\label{contour2}
\text{Re}(\alpha+s+u)<1/2, \qquad \text{Re}(\beta+s+v)<1/2.
\end{equation}
To keep the symmetry, we write
\[
\mathcal{M}_1^+(M,N)+\mathcal{M}_1^-(M,N)=\mathcal{P}_1(M,N)+\mathcal{P}_2(M,N),
\]
where, for $j=1,2$,
\begin{align}\label{Nintegral}
\mathcal{P}_j(M,N)&=\sum_{r\geq1}\sum_{\substack{d\\(a,b)=1}}\frac{\alpha_{da}\beta_{db}}{dab}\frac{1}{(2\pi i)^3}\int_{(c_2)}\int_{(c_1)} \widetilde{W}(u)\widetilde{W}(v)M^uN^v\\
&\qquad \int_{(\eps)} X_+(s)H_j(s)\Big(\frac q\pi\Big)^s\Big(\frac {q}{a}\Big)^{-(\alpha+s+u)}\Big(\frac {q}{b}\Big)^{-(\beta+s+v)}r^{-(\alpha+\beta+2s+u+v)}\,\frac{ds}{s}dudv,\nonumber
\end{align}
where
\[
H_1(s)=\Gamma(1/2-\beta-s-v)\bigg(\frac{\Gamma(\alpha+\beta+2s+u+v)}{\Gamma(1/2+\alpha+s+u)}+\frac{\Gamma(1/2-\alpha-s-u)}{2\Gamma(1-\alpha-\beta-2s-u-v)}\bigg)
\]
and
\[
H_2(s)=\Gamma(1/2-\alpha-s-u)\bigg(\frac{\Gamma(\alpha+\beta+2s+u+v)}{\Gamma(1/2+\beta+s+v)}+\frac{\Gamma(1/2-\beta-s-v)}{2\Gamma(1-\alpha-\beta-2s-u-v)}\bigg).
\]

\subsubsection{The $r$-sum}

The aim here is to show that we can somehow replace the sum over $r$ in \eqref{Nintegral} by $\zeta(\alpha+\beta+2s+u+v)$. 

We start with $\mathcal{P}_1(M,N)$. Choose $c_1=0$ and $c_2=\eps$. We move the $s$-contour to the right to $\text{Re}(s)=1/2-\eps/3$, crossing a simple pole at $s=1/2-\beta-v$ from the first gamma factor of $H_1(s)$. In doing so we obtain 
\[
\mathcal{P}_1(M,N)=\mathcal{P}_1'(M,N)-\mathcal{R}_1(M,N),
\]
where $\mathcal{P}_1'(M,N)$ is the integral along the new line and
\begin{align*}
\mathcal{R}_1(M,N)&=-\frac32\sum_{r\geq1}\sum_{\substack{d\\(a,b)=1}}\frac{\alpha_{da}\beta_{db}}{dab}\frac{1}{(2\pi i)^2}\int_{(\eps)}\int_{(0)} X_+(1/2-\beta-v)\widetilde{W}(u)\widetilde{W}(v)M^uN^v\\
&\qquad\qquad \Big(\frac q\pi\Big)^{1/2-\beta-v}\Big(\frac {q}{a}\Big)^{-(1/2+\alpha-\beta+u-v)}\Big(\frac {q}{b}\Big)^{-1/2}r^{-(1+\alpha-\beta+u-v)}\,\frac{dudv}{1/2-\beta-v}.
\end{align*}
As moving the $u$-contour in the above expression to $\text{Re}(u)=2\eps$ encountering no pole, we have
\begin{align}\label{R1}
&\mathcal{R}_1(M,N)=-\frac32\sum_{\substack{d\\(a,b)=1}}\frac{\alpha_{da}\beta_{db}}{dab}\frac{1}{(2\pi i)^2}\int_{(\eps)}\int_{(2\eps)} X_+(1/2-\beta-v)\widetilde{W}(u)\widetilde{W}(v)M^uN^v\nonumber\\
&\qquad\quad \Big(\frac q\pi\Big)^{1/2-\beta-v}\Big(\frac {q}{a}\Big)^{-(1/2+\alpha-\beta+u-v)}\Big(\frac {q}{b}\Big)^{-1/2}\zeta(1+\alpha-\beta+u-v)\,\frac{dudv}{1/2-\beta-v}.
\end{align}

With $\mathcal{P}_1'(M,N)$, the $r$-sum can be written as $\zeta(\alpha+\beta+2s+u+v)$. We then shift the $s$-contour back to $\text{Re}(s)=\eps$, but this time crossing two simple poles at $s=1/2-\beta-v$, again, and $s=1/2-(\alpha+\beta+u+v)/2$, because of the zeta-function. We write
\[
\mathcal{P}_1'(M,N)=\mathcal{P}_1''(M,N)+\mathcal{R}_1'(M,N)+\mathcal{R}_1''(M,N)
\]
accordingly, where $\mathcal{R}_1'(M,N)$ and $\mathcal{R}_1''(M,N)$ are the residues at $s=1/2-\beta-v$ and $s=1/2-(\alpha+\beta+u+v)/2$, respectively. It follows that
\[
\mathcal{P}_1(M,N)=\mathcal{P}_1''(M,N)-\big(\mathcal{R}_1(M,N)-\mathcal{R}_1'(M,N)\big)+\mathcal{R}_1''(M,N).
\]

Note that $\mathcal{R}_1'(M,N)$ is the same as $\mathcal{R}_1(M,N)$ in \eqref{R1} but with the $u$-contour being along $\text{Re}(u)=0$. So the difference $\mathcal{R}_1(M,N)-\mathcal{R}_1'(M,N)$ is the residue at $u=v-\alpha+\beta$ of \eqref{R1}. That is to say
\begin{align*}
&\mathcal{R}_1(M,N)-\mathcal{R}_1'(M,N)=-\frac{3}{2q}\sum_{a,b}\frac{\alpha_{a}\beta_{b}}{\sqrt{ab}}\\
&\qquad\quad\frac{1}{2\pi i}\int_{(\eps)} X_+(1/2-\beta-v)\widetilde{W}(v-\alpha+\beta)\widetilde{W}(v) \Big(\frac q\pi\Big)^{1/2-\beta-v}M^{v-\alpha+\beta}N^v\frac{dv}{1/2-\beta-v}.
\end{align*}
We shall make use of the fact that
\begin{align}\label{tildeW}
\widetilde{W}(v)N^v&=N^v\int_{0}^{\infty}x^{v-1}W(x)dx\nonumber\\
&=\sum_{n}n^{v-1}W\Big(\frac nN\Big)+N^v\sum_{n}\int_{n/N}^{(n+1)/N}\bigg(x^{v-1}W(x)-\Big(\frac nN\Big)^{v-1}W\Big(\frac nN\Big)\bigg)dx\nonumber\\
&=\sum_{n}n^{v-1}W\Big(\frac nN\Big)+O_\eps\big(q^\eps N^{\text{Re}(v)-1}|v|\big).
\end{align}
Using this and \eqref{Vpm} we obtain 
\begin{align*}
&\mathcal{R}_1(M,N)-\mathcal{R}_1'(M,N)\\
&\qquad\quad=-\frac{3}{2q}\sum_{\substack{a,b\\m,n}}\frac{\alpha_{a}\beta_{b}}{\sqrt{ab}m^{1/2+\alpha}n^{1/2+\beta}}W\Big(\frac mM\Big)W\Big(\frac nN\Big)V_+\Big(\frac{\pi mn}{q}\Big)+O_\eps\big(q^{\kappa-1/2+\eps} (MN)^{-1}\big).
\end{align*}

For $\mathcal{R}_1''(M,N)$, we have
\[
H_1\Big(\frac{1-\alpha-\beta-u-v}{2}\Big)=\frac{2}{\alpha-\beta+u-v}.
\]
Hence
\begin{align}\label{R''}
&\mathcal{R}_1''(M,N)=\frac{2}{q}\sum_{\substack{d\\(a,b)=1}}\frac{\alpha_{da}\beta_{db}}{d\sqrt{ab}}\frac{1}{(2\pi i)^2}\int_{(\eps)}\int_{(0)} X_+\Big(\frac{1-\alpha-\beta-u-v}{2}\Big) \widetilde{W}(u)\widetilde{W}(v)M^uN^v\nonumber\\
&\qquad\qquad \Big(\frac q\pi\Big)^{(1-\alpha-\beta-u-v)/2}\Big(\frac {a}{b}\Big)^{(\alpha-\beta+u-v)/2}\frac{dudv}{(1-\alpha-\beta-u-v)(\alpha-\beta+u-v)}.
\end{align}
%We first move the $v$-contour to $\text{Re}(v)=1/2-2\eps$, encountering no pole, then move the $u$-contour to $\text{Re}(u)=1/2-\eps$ (resp. $\text{Re}(u)=1/2-3\eps$) if $A\leq B$ (resp. $A>B$), crossing a .

We apply the same argument to $\mathcal{P}_2(M,N)$, but this time choose $c_1=\eps$ and $c_2=0$. Similarly we obtain
\[
\mathcal{P}_2(M,N)=\mathcal{P}_2''(M,N)-\big(\mathcal{R}_2(M,N)-\mathcal{R}_2'(M,N)\big)+\mathcal{R}_2''(M,N).
\]
In particular, note that
\[
H_2\Big(\frac{1-\alpha-\beta-u-v}{2}\Big)=-\frac{2}{\alpha-\beta+u-v}.
\]
So
\begin{align*}
&\mathcal{R}_2''(M,N)=-\frac{2}{q}\sum_{\substack{d\\(a,b)=1}}\frac{\alpha_{da}\beta_{db}}{d\sqrt{ab}}\frac{1}{(2\pi i)^2}\int_{(0)}\int_{(\eps)} X_+\Big(\frac{1-\alpha-\beta-u-v}{2}\Big) \widetilde{W}(u)\widetilde{W}(v)M^uN^v\\
&\qquad\qquad \Big(\frac q\pi\Big)^{(1-\alpha-\beta-u-v)/2}\Big(\frac {a}{b}\Big)^{(\alpha-\beta+u-v)/2}\frac{dudv}{(1-\alpha-\beta-u-v)(\alpha-\beta+u-v)}.\nonumber
\end{align*}
Apart from the minus sign and the changes of the contours, this is the same as $\mathcal{R}_1''(M,N)$. Hence, using \eqref{tildeW} as before,
\begin{align*}
&\mathcal{R}_1''(M,N)+\mathcal{R}_2''(M,N)=-\text{Res}_{u=v-\alpha+\beta}\ \text{of \eqref{R''}}\\
&\quad\qquad=-\frac{1}{q}\sum_{a,b}\frac{\alpha_{a}\beta_{b}}{\sqrt{ab}}\frac{1}{2\pi i}\int_{(\eps)} X_+(1/2-\beta-v) \Big(\frac q\pi\Big)^{1/2-\beta-v}\\
&\quad\quad\qquad\qquad\qquad\widetilde{W}(1/2-\beta-v)\widetilde{W}(v)M^{1/2-\beta-v}N^v\frac{dv}{(1/2-\beta-v)}\\
&\quad\qquad=-\frac{1}{q}\sum_{\substack{a,b\\m,n}}\frac{\alpha_{a}\beta_{b}}{\sqrt{ab}m^{1/2+\alpha}n^{1/2+\beta}}W\Big(\frac mM\Big)W\Big(\frac nN\Big)V_+\Big(\frac{\pi mn}{q}\Big)+O_\eps\big(q^{\kappa-1/2+\eps} M^{-1/2}N^{-1}\big).
\end{align*}

We sum up the calculations in this subsection in the following proposition.

\begin{proposition}\label{propM1}
For $(M,N)\in\mathcal{A}_3$ we have
\begin{align*}
&\mathcal{M}_1^+(M,N)+\mathcal{M}_1^-(M,N)=\mathcal{P}_1''(M,N)+\mathcal{P}_2''(M,N)\\
&\qquad\qquad+\frac{2}{q}\sum_{\substack{a,b\\m,n}}\frac{\alpha_{a}\beta_{b}}{\sqrt{ab}m^{1/2+\alpha}n^{1/2+\beta}}W\Big(\frac mM\Big)W\Big(\frac nN\Big)V_+\Big(\frac{\pi mn}{q}\Big)+O_\eps\big(q^{7\kappa/2+5\delta_0-5/2+\eps}\big),
\end{align*}
where, for $j=1,2$,
\begin{align}\label{N''}
&\mathcal{P}_j''(M,N)=\sum_{\substack{d\\(a,b)=1}}\frac{\alpha_{da}\beta_{db}}{dab}\frac{1}{(2\pi i)^3}\int_{(c_2)}\int_{(c_1)} \widetilde{W}(u)\widetilde{W}(v)M^uN^v\\
&\qquad\qquad \int_{(\eps)} X_+(s)H_j(s)\Big(\frac q\pi\Big)^s\Big(\frac {q}{a}\Big)^{-(\alpha+s+u)}\Big(\frac {q}{b}\Big)^{-(\beta+s+v)}\zeta(\alpha+\beta+2s+u+v)\,\frac{ds}{s}dudv.\nonumber
\end{align}
\end{proposition}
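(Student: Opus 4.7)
\medskip

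\noindent\textbf{Proof plan for Proposition \ref{propM1}.}

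The plan is to process $\mathcal{P}_1(M,N)$ and $\mathcal{P}_2(M,N)$ separately by a two-stage contour shift in the $s$-variable, designed to replace the Dirichlet series in $r$ by $\zeta(\alpha+\beta+2s+u+v)$ while keeping precise track of every residue we cross. First I would fix the contours $c_1=0$, $c_2=\varepsilon$ for $\mathcal{P}_1$ and $c_1=\varepsilon$, $c_2=0$ for $\mathcal{P}_2$ so that the kernels $H_1,H_2$ are initially regular on the original $s$-line $\mathrm{Re}(s)=\varepsilon$; the choice is asymmetric to avoid crossing $u=v-\alpha+\beta$ at the start and to leave enough room for the shifts below. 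The rapid decay of $\widetilde{W}$ and the gamma-ratio in $X_+$ (via Stirling) will justify all contour deformations, so the analytic work is routine bookkeeping once the residue locations are identified.

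For $\mathcal{P}_1(M,N)$ I would push the $s$-contour rightward to $\mathrm{Re}(s)=1/2-\varepsilon/3$, thereby crossing the simple pole of $\Gamma(1/2-\beta-s-v)$ inside $H_1$ at $s=1/2-\beta-v$; call the residue $\mathcal{R}_1(M,N)$ and the remaining integral $\mathcal{P}_1'(M,N)$. On the new line the $r$-sum converges absolutely to $\zeta(\alpha+\beta+2s+u+v)$, and I would pull the contour back to $\mathrm{Re}(s)=\varepsilon$, this time crossing two simple poles: again $s=1/2-\beta-v$ (yielding $\mathcal{R}_1'$) and $s=(1-\alpha-\beta-u-v)/2$ from the zeta pole (yielding $\mathcal{R}_1''$). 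The residual integral $\mathcal{P}_1''(M,N)$ is of the advertised form \eqref{N''}. The key algebraic observation is that $\mathcal{R}_1-\mathcal{R}_1'$ is simply the residue in $u$ at $u=v-\alpha+\beta$ of the common integrand, which lets me collapse the triple integral to a clean single Mellin integral in $v$. An identical procedure applied to $\mathcal{P}_2$, using the symmetric pole $s=1/2-\alpha-s-u$ in $H_2$, produces $\mathcal{P}_2''$, $\mathcal{R}_2$, $\mathcal{R}_2'$, $\mathcal{R}_2''$, and the same collapsing applies to $\mathcal{R}_2-\mathcal{R}_2'$.

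The next step is reassembly. Using the explicit evaluations
\[
H_1\bigl(\tfrac{1-\alpha-\beta-u-v}{2}\bigr)=\frac{2}{\alpha-\beta+u-v},\qquad H_2\bigl(\tfrac{1-\alpha-\beta-u-v}{2}\bigr)=-\frac{2}{\alpha-\beta+u-v},
\]
the two double integrals defining $\mathcal{R}_1''$ and $\mathcal{R}_2''$ coincide up to sign and to the relative position of the $u$- and $v$-contours; their sum is therefore again a $u$-residue at $u=v-\alpha+\beta$, and it evaluates to a single Mellin integral in $v$ with integrand involving $X_+(1/2-\beta-v)$ and $\widetilde{W}(1/2-\beta-v)\widetilde{W}(v)$. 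I would then invoke the Mellin-sum identity \eqref{tildeW} twice (once for each $\widetilde{W}$) to rewrite this single integral as
\[
\frac{1}{q}\sum_{a,b,m,n}\frac{\alpha_a\beta_b}{\sqrt{ab}\,m^{1/2+\alpha}n^{1/2+\beta}}W\Bigl(\frac{m}{M}\Bigr)W\Bigl(\frac{n}{N}\Bigr)V_+\Bigl(\frac{\pi mn}{q}\Bigr)
\]
plus an admissible error. A parallel application of \eqref{tildeW} handles $(\mathcal{R}_1-\mathcal{R}_1')+(\mathcal{R}_2-\mathcal{R}_2')$, producing a term of the same shape but with coefficient $-3/q$. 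Combining, the coefficient in front of the diagonal-style sum becomes $-(-3/q)+(-1/q)=2/q$, matching the statement.

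The main obstacle is the careful arithmetic of residues: one must verify that the extra poles $s=1/2-\alpha-s-u$ in $H_1$ and $s=1/2-\beta-s-v$ in $H_2$ are not crossed during the respective shifts (this is where the asymmetric choice of $c_1,c_2$ pays off), that the $\zeta$-pole is the only extra one picked up on the return journey, and that the sign pattern of the four residues $\mathcal{R}_j,\mathcal{R}_j',\mathcal{R}_j''$ combines into the final coefficient $2/q$. The error $O_\varepsilon(q^{7\kappa/2+5\delta_0-5/2+\varepsilon})$ arises by bounding the tails produced by \eqref{tildeW}; since we are in the regime $(M,N)\in\mathcal{A}_3$, so in particular $MN\gg q^{2-2\delta_0}/(AB)$ with $A,B\le q^\kappa$, the bound $q^\varepsilon N^{\mathrm{Re}(v)-1}|v|$ in \eqref{tildeW} translates (after executing the $v$-integral against the gamma-factor decay) to the stated error, completing the proof.
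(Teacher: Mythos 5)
Your proposal is correct and follows essentially the same route as the paper: the same asymmetric contour choices ($c_1=0,c_2=\eps$ for $\mathcal{P}_1$ and $c_1=\eps,c_2=0$ for $\mathcal{P}_2$), the same two-stage shift of the $s$-contour past $\mathrm{Re}(s)=1/2$ and back to replace the $r$-sum by $\zeta(\alpha+\beta+2s+u+v)$, the same identification of $\mathcal{R}_j-\mathcal{R}_j'$ and of $\mathcal{R}_1''+\mathcal{R}_2''$ as $u$-residues at $u=v-\alpha+\beta$, and the same use of \eqref{tildeW} to produce the $2/q$ term (your coefficient bookkeeping $-(-3/q)+(-1/q)=2/q$ matches the paper's $-2\cdot(-\tfrac{3}{2q})-\tfrac{1}{q}$). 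Aside from the harmless typos in naming the uncrossed poles ($s=1/2-\alpha-u$, $s=1/2-\beta-v$), there is nothing to correct.
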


\subsection{The secondary main terms $\mathcal{M}_{2,\gtrless}(M,N)$}\label{secondarymt2}

Applying Proposition \ref{prop2} to $S_{\alpha,\beta}^+(M,N)$ for the pairs $(M,N)\in\mathcal{A}_{2,<}$ we get
\begin{align*}
\mathcal{M}_{2,<}(M,N)=&\,\frac 2q\sum_{a,b}\frac{\alpha_{a}\beta_{b}}{\sqrt{ab}}\frac{1}{2\pi i}\int_{(\eps)} X_+(s)\Big(\frac q\pi\Big)^s\\
&\qquad\qquad \sum_m m^{-(1/2+\alpha+s)}\,W\Big ( \frac{m}{M} \Big )\int x^{-(1/2+\beta+s)}\,W\Big ( \frac{x}{N} \Big )dx\frac{ds}{s}.
\end{align*}
Recall from \eqref{tildeW} that
\begin{align*}
\int_{0}^{\infty}x^{u-1}W\Big(\frac xN\Big)dx=\sum_{n}n^{u-1}W\Big(\frac nN\Big)+O_\eps\big(q^\eps N^{\text{Re}(u)-1}|u|\big).
\end{align*}
The contribution of the $O$-term to $\mathcal{M}_{2,<}(M,N)$ is $O_\eps\big(q^{-1+\eps}\sqrt{ABM/N}\big)$. Hence
\begin{equation}\label{propM2}
\mathcal{M}_{2,<}(M,N)=\frac{2}{q}\sum_{\substack{a,b\\m,n}}\frac{\alpha_{a}\beta_{b}}{\sqrt{ab}m^{1/2+\alpha}n^{1/2+\beta}}W\Big(\frac mM\Big)W\Big(\frac nN\Big)V_+\Big(\frac{\pi mn}{q}\Big)+O_\eps\big(q^{-1/2-4\delta_0+\eps}\big).
\end{equation}
The same expression holds for $\mathcal{M}_{2,>}(M,N)$.

\subsection{Assembling the partition of unity}

Recall that Proposition \ref{propM1} holds when $(M,N)\in\mathcal{A}_3$ and \eqref{propM2} holds when $(M,N)\in\mathcal{A}_{2,\gtrless}$. Summing up we obtain
\begin{align*}
&S_{\alpha,\beta}^{+}=\sum_{(M,N)\in\mathcal{A}_3} \big(\mathcal{P}_1''(M,N)+\mathcal{P}_2''(M,N)\big)\\
&\qquad\qquad+\sum_{(M,N)\notin\mathcal{A}_{1}}\frac{2}{q}\sum_{\substack{a,b\\m,n}}\frac{\alpha_{a}\beta_{b}}{\sqrt{ab}m^{1/2+\alpha}n^{1/2+\beta}}W\Big(\frac mM\Big)W\Big(\frac nN\Big)V_+\Big(\frac{\pi mn}{q}\Big)+O_\eps\big(q^{-\delta_0+\eps}\big)
\end{align*} 
for some $\delta_0>0$.

The condition $(M,N)\notin\mathcal{A}_{1}$ in the second sum may be removed at the cost of an error of size $O_\eps\big(q^{-\delta_0+\eps}\big)$. This allows us to extend the summation over all $(M,N)$, and thus to remove the partition of unity. For the first sum, the following result shall allow us to add all the missing pairs $(M,N)$.

\begin{lemma}
With $\mathcal{P}_j''(M,N)$, $j=1,2$, defined as in \eqref{N''} we have
\[
\mathcal{P}_j''(M,N)\ll_\eps q^\eps\min\bigg\{\frac{\sqrt{ABMN}}{q},\sqrt{\frac AN},\sqrt{\frac BM}\bigg\}.
\]
\end{lemma}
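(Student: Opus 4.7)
The plan is to prove each of the three estimates by shifting the contours in \eqref{N''} to suitable vertical lines and then bounding the integrand trivially. First I would derive a uniform pointwise bound valid on any triple of lines $\mathrm{Re}(s)=\sigma$, $\mathrm{Re}(u)=\tau_1$, $\mathrm{Re}(v)=\tau_2$ with $\sigma+\tau_1,\sigma+\tau_2\geq 0$. Combining the $q$-factors $(q/\pi)^s(q/a)^{-(\alpha+s+u)}(q/b)^{-(\beta+s+v)}$ with $a\asymp A/d$, $b\asymp B/d$, and using $|\alpha_{da}|,|\beta_{db}|\ll q^\eps$, the triple sum over $d,a,b$ contributes at most $q^\eps A^{\sigma+\tau_1}B^{\sigma+\tau_2}$, while the weights $M^u N^v$ and the $q$-powers collapse to $q^{-\sigma-\tau_1-\tau_2+O(1/\log q)}M^{\tau_1}N^{\tau_2}$. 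Altogether this yields the uniform estimate
\begin{equation}\label{ptwise-plan}
\mathcal{P}_j''(M,N)\ll_{\eps} q^{\eps}\,(AB/q)^{\sigma}(AM/q)^{\tau_1}(BN/q)^{\tau_2},
\end{equation}
valid for any admissible triple $(\sigma,\tau_1,\tau_2)$.

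The three bounds in the lemma then follow from three substitutions: $(\sigma,\tau_1,\tau_2)=(\eps,\tfrac12,\tfrac12)$ yields $\sqrt{ABMN}/q$; $(\sigma,\tau_1,\tau_2)=(\tfrac12,0,-\tfrac12)$ yields $\sqrt{A/N}$; and $(\sigma,\tau_1,\tau_2)=(\tfrac12,-\tfrac12,0)$ yields $\sqrt{B/M}$, the last being the mirror of the second under $(a,u,M)\leftrightarrow(b,v,N)$. Absolute convergence on each new line is guaranteed by the super-polynomial decay of $\widetilde{W}(u)$ and $\widetilde{W}(v)$ on vertical lines, the rapid decay of $G(s)$ built into $X_+(s)$ (see Remark \ref{rmkV}), and Stirling's approximation for the remaining gamma ratios in $X_+(s)H_j(s)$.

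To pass from the initial lines $c_1=0$, $c_2=\eps$, $\mathrm{Re}(s)=\eps$ to each target triple one shifts the three contours in turn, collecting residues at the simple poles of $\zeta(\alpha+\beta+2s+u+v)$ at $\alpha+\beta+2s+u+v=1$ and of the gamma factors $\Gamma(\alpha+\beta+2s+u+v)$, $\Gamma(1/2-\alpha-s-u)$, $\Gamma(1/2-\beta-s-v)$ inside $H_j(s)$. Each residue is an integral of one lower dimension obtained by freezing one of $s,u,v$ at its critical value, and applying \eqref{ptwise-plan} to the reduced integral (with the remaining two variables on the same lines) shows that every residue also respects the target bound. The \textbf{main obstacle} is precisely this bookkeeping: one must carefully enumerate the finitely many poles crossed during each shift and confirm case by case that the reduced integrals all obey the desired bound, with the mild complication that when $\sigma+\tau_j=0$ the inner $a$- or $b$-sum contributes an extra logarithm absorbed into $q^\eps$.
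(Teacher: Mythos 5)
Your uniform pointwise estimate is exactly the bound the paper works with: staying on lines $\mathrm{Re}(s)=\sigma$, $\mathrm{Re}(u)=c_1$, $\mathrm{Re}(v)=c_2$ one gets $\mathcal{P}_j''(M,N)\ll_\eps q^{-(\sigma+c_1+c_2)+\eps}M^{c_1}N^{c_2}A^{\sigma+c_1}B^{\sigma+c_2}$, which coincides with your $(AB/q)^{\sigma}(AM/q)^{\tau_1}(BN/q)^{\tau_2}$. The gap is in where you place the contours and what that forces. This bound is only available when all three lines stay \emph{strictly} inside the pole-free region $0<2\sigma+c_1+c_2<1$, $\sigma+c_1<\tfrac12$, $\sigma+c_2<\tfrac12$ (right of the pole of $\Gamma(\alpha+\beta+2s+u+v)$, left of the pole of $\zeta(\alpha+\beta+2s+u+v)$, and left of the poles of $\Gamma(1/2-\alpha-s-u)$, $\Gamma(1/2-\beta-s-v)$); your condition $\sigma+\tau_1,\sigma+\tau_2\geq0$ is not the right admissibility. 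Your triple $(\eps,\tfrac12,\tfrac12)$ has $2\sigma+\tau_1+\tau_2>1$ and $\sigma+\tau_i>\tfrac12$, so reaching it crosses the $\zeta$-pole and both gamma poles, while $(\tfrac12,0,-\tfrac12)$ and $(\tfrac12,-\tfrac12,0)$ put $\mathrm{Re}(s+u)$, resp.\ $\mathrm{Re}(s+v)$, exactly at $\tfrac12$, i.e.\ on (within $O(1/\log q)$ of) a pole of $\Gamma(1/2-\alpha-s-u)$, resp.\ $\Gamma(1/2-\beta-s-v)$, since $\mathrm{Re}(\alpha),\mathrm{Re}(\beta)$ may vanish or be negative. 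Consequently your argument stands or falls with the residue analysis that you explicitly defer, and the blanket claim that ``every residue also respects the target bound'' is neither proved nor immediate: for instance, the residue at $u=1/2-\alpha-s$ contributes roughly $A^{1/2}B^{\tau_2}M^{1/2}N^{\tau_2}q^{-1/2-\tau_2+O(\eps)}$, which is $\ll q^{\eps}\sqrt{ABMN}/q$ only after you push the remaining $v$-contour up to $\mathrm{Re}(v)=\tfrac12$ as well, and doing so crosses the pole of the resulting factor $\zeta(1/2+\beta+s+v)$ — so the bookkeeping is an iterated cascade, not a one-step check. As written, the key step of your proof is asserted rather than carried out.

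All of this is avoidable, and that is how the paper argues: one never leaves the pole-free region, so no residues appear at all. Taking $(\sigma,c_1,c_2)=(\eps,\tfrac12-2\eps,\tfrac12-2\eps)$ in the displayed bound gives $\ll q^{3\eps}\sqrt{ABMN}/q$ (the deficit exponents fall on quantities $\geq1$), while $(\tfrac12-2\eps,\eps,-\tfrac12+\eps)$ gives $\ll q^{2\eps}\sqrt{A/N}$ and its mirror $(\tfrac12-2\eps,-\tfrac12+\eps,\eps)$ gives $\ll q^{2\eps}\sqrt{B/M}$; each triple satisfies the three strict inequalities, and the $q^\eps$ in the statement of the lemma absorbs the perturbation. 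So either repair your write-up by $\eps$-perturbing your three target triples into the open region (after which your argument is precisely the paper's), or else carry out in full the enumeration and estimation of every residue your shifts produce.
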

\begin{proof}
Recall from \eqref{contour1} and \eqref{contour2} that for $j=1,2$, $H_j$'s have rapid decay as any of the variables gets large in the imaginary directions. So we have, trivially, 
\[
\mathcal{P}_j''(M,N)\ll_\eps q^{-(\sigma+c_1+c_2)+\eps}M^{c_1}N^{c_2}A^{\sigma+c_1}B^{\sigma+c_2},
\]
provided that
\[
0<2\sigma+c_1+c_2<1,\qquad \sigma+c_1<\frac12\qquad\text{and}\qquad \sigma+c_2<\frac12.
\]
The lemma follows by choosing various suitable values of $\sigma$, $c_1$ and $c_2$.
\end{proof}

The above lemma implies that for $j=1,2$,
\begin{eqnarray*}
\sum_{(M,N)\in\mathcal{A}_3}\mathcal{P}_j''(M,N)=\sum_{M,N}\mathcal{P}_j''(M,N)+O_\eps\big(q^{-\delta_0+\eps}\big).
\end{eqnarray*}
Now we can apply the following result of Young [\textbf{\ref{Y}}; p. 30].

\begin{lemma}\label{sumunity}
Let $F(s_1,s_2)$ be an entire function of rapid decay in each variable in a fixed strip $|\emph{Re}(s_j)|\leq C$, $j=1,2$. Then we have
\[
\sum_{M,N}\frac{1}{(2\pi i)^2}\int_{(c_2)}\int_{(c_1)}F(s_1,s_2)\widetilde{W}(s_1)\widetilde{W}(s_2)ds_1ds_2=F(0,0).
\]
\end{lemma}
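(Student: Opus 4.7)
The plan is to reverse the dyadic partition of unity via Mellin inversion, reducing everything to the elementary identity $\sum_M W(x/M)=1$. Reading the integrand as implicitly carrying the factors $M^{s_1}N^{s_2}$ supplied by the preceding application in \eqref{N''}, the assertion splits by Fubini into two copies of the one-variable analogue: for any entire $G$ of rapid decay in a vertical strip,
\[
\sum_M \frac{1}{2\pi i}\int_{(c)} G(s)\,M^s\,\widetilde W(s)\,ds \;=\; G(0).
\]

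To establish this, I would introduce the Mellin-dual test function
\[
\phi(x) := \frac{1}{2\pi i}\int_{(c)} G(1-s)\,x^{-s}\,ds,
\]
so that $\phi$ is smooth with rapid decay at both $0$ and $\infty$, satisfies $G(1-s)=\int_0^\infty \phi(x)x^{s-1}\,dx$, and in particular $G(0)=\int_0^\infty \phi(x)\,dx$. The key step is then to apply the pointwise partition identity inside this integral, giving
\[
G(0) = \int_0^\infty \phi(x)\,dx = \sum_M \int_0^\infty \phi(x)\,W(x/M)\,dx,
\]
where the exchange of summation and integration is legitimate because for every fixed $x$ only finitely many $M$ contribute. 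For each individual $M$, using the scaling $\int_0^\infty W(x/M)x^{s-1}\,dx = M^s\widetilde W(s)$ together with Parseval for the Mellin transform gives $\int_0^\infty \phi(x)W(x/M)\,dx = \frac{1}{2\pi i}\int_{(1-c)} G(s)M^s\widetilde W(s)\,ds$; a harmless contour shift (both $G$ and $\widetilde W$ are entire and of rapid decay in vertical strips) produces the one-variable identity, and iterating the argument in the two variables $s_1$, $s_2$ yields the full statement.

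The main obstacle is the justification of the interchanges: pointwise in $s$ the formal series $\sum_M M^s$ diverges, so the sum over $M$ must always be taken \emph{after} pairing against the rapidly decaying factor $\phi$. Organising the argument so that the partition of unity is applied before the Mellin inversion is precisely what enforces this ordering; the remaining convergence checks reduce to shifting the $s$-contour far to the right (for small $M$) or far to the left (for large $M$) and invoking the rapid decay of $G$ and $\widetilde W$.
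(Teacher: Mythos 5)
The paper does not actually prove this lemma; it is quoted from Young [\textbf{\ref{Y}}, p.~30], so there is no internal proof to compare against. Your argument is the standard one (and essentially Young's): undo the Mellin transform, apply the pointwise partition identity $\sum_M W(x/M)=1$, and reassemble, and you were right to read the statement as implicitly containing the factors $M^{s_1}N^{s_2}$ coming from \eqref{N''} (without them the sum over $M,N$ is meaningless). The outline is correct; the interchange of $\sum_M$ with $\int_0^\infty$ is indeed harmless since $W\geq 0$ and the partial sums of $W(x/M)$ are bounded by $1$, so everything is dominated by $|\phi(x)|$, which is integrable.

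Two small points of care, neither of which is a genuine gap. First, $F$ (hence $G$) is only assumed to decay rapidly in the \emph{fixed} strip $|\mathrm{Re}(s)|\leq C$, not in arbitrary vertical strips, so in defining $\phi(x)=\frac{1}{2\pi i}\int_{(c)}G(1-s)x^{-s}\,ds$ you must take the contour with $\mathrm{Re}(s)=c$ satisfying $|1-c|\leq C$ (e.g.\ $c=1$); then the Parseval step, after the substitution $s\mapsto 1-s$, lands you directly on a line inside $|\mathrm{Re}(u)|\leq C$, and the subsequent shift to $(c_1)$ never leaves the strip where $G$ is known to decay. Your parenthetical claim that $G$ decays ``in vertical strips'' is stronger than the hypothesis and should be avoided. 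Second, $\phi$ does not have rapid decay at $0$ and $\infty$: shifting the contour within $1-C\leq\mathrm{Re}(s)\leq 1+C$ only gives $\phi(x)\ll x^{C-1}$ near $0$ and $\phi(x)\ll x^{-1-C}$ near $\infty$. This is still integrable and suffices for $\int_0^\infty\phi(x)\,dx=\widetilde{\phi}(1)=G(0)$ and for the Mellin--Parseval step, so the proof goes through; just state the weaker (and correct) decay. For the two-variable statement one should also read ``rapid decay in each variable'' as uniform in the other variable so that Fubini applies, which is how the lemma is used in the paper.
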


In view of this we get
\begin{align*}
&\sum_{(M,N)\in\mathcal{A}_3}\big(\mathcal{P}_1''(M,N)+\mathcal{P}_2''(M,N)\big)=\mathcal{N}_{\alpha,\beta}^++O_\eps\big(q^{-\delta_0+\eps}\big),
\end{align*}
where
\[
\mathcal{N}_{\alpha,\beta}^+=\sum_{\substack{d\\(a,b)=1}}\frac{\alpha_{da}\beta_{db}}{dab}\frac{1}{2\pi i}\int_{(\eps)} X_+(s)H(s)\Big(\frac q\pi\Big)^s\Big(\frac {q}{a}\Big)^{-(\alpha+s)}\Big(\frac {q}{b}\Big)^{-(\beta+s)}\zeta(\alpha+\beta+2s)\,\frac{ds}{s}
\]
and
\begin{align*}
H(s)=&\frac{\Gamma(\alpha+\beta+2s)\Gamma(1/2-\beta-s)}{\Gamma(1/2+\alpha+s)}+\frac{\Gamma(\alpha+\beta+2s)\Gamma(1/2-\alpha-s)}{\Gamma(1/2+\beta+s)}\\
&\qquad\qquad+\frac{\Gamma(1/2-\alpha-s)\Gamma(1/2-\beta-s)}{\Gamma(1-\alpha-\beta-2s)}.
\end{align*}
Using Lemma 8.2 of [\textbf{\ref{Y}}] we have
\[
H(s)=\pi^{1/2}\frac{\Gamma(\frac{\alpha+\beta+2s}{2})\Gamma(\frac{1/2-\alpha-s}{2})\Gamma(\frac{1/2-\beta-s}{2})}{\Gamma(\frac{1-\alpha-\beta-2s}{2})\Gamma(\frac{1/2+\alpha+s}{2})\Gamma(\frac{1/2+\beta+s}{2})}.
\]
So
\begin{align*}
\mathcal{N}_{\alpha,\beta}^+&=\frac{\pi^{1/2}q^{-(\alpha+\beta)}}{\Gamma(\frac{1/2+\alpha}{2})\Gamma(\frac{1/2+\beta}{2})}\sum_{\substack{d\\(a,b)=1}}\frac{\alpha_{da}\beta_{db}}{da^{1-\alpha}b^{1-\beta}}\\
&\qquad\qquad\frac{1}{2\pi i}\int_{(\eps)} G(s)\frac{\Gamma(\frac{\alpha+\beta+2s}{2})\Gamma(\frac{1/2-\alpha-s}{2})\Gamma(\frac{1/2-\beta-s}{2})}{\Gamma(\frac{1-\alpha-\beta-2s}{2})}\Big(\frac{ab} {\pi q}\Big)^{s}\zeta(\alpha+\beta+2s)\,\frac{ds}{s}.
\end{align*}
We apply the functional equation,
\begin{align*}
&\pi^{-(\alpha+\beta+2s)/2}\Gamma\Big(\frac{\alpha+\beta+2s}{2}\Big)\zeta(\alpha+\beta+2s)\\
&\qquad\qquad=\pi^{-(1-\alpha-\beta-2s)/2}\Gamma\Big(\frac{1-\alpha-\beta-2s}{2}\Big)\zeta(1-\alpha-\beta-2s),
\end{align*}
and change the variable $s\rightarrow -s$ to obtain
\begin{align}\label{N+}
\mathcal{N}_{\alpha,\beta}^+=-\Big(\frac{q}{\pi}\Big)^{-(\alpha+\beta)}\sum_{\substack{d\\(a,b)=1}}\frac{\alpha_{da}\beta_{db}}{da^{1-\alpha}b^{1-\beta}}\frac{1}{2\pi i}\int_{(-\eps)} X_-(s)\Big(\frac {\pi ab}{ q}\Big)^{-s}\zeta(1-\alpha-\beta+2s)\,\frac{ds}{s}.
\end{align}

We conclude that

\begin{proposition}\label{S+}
We have
\begin{align*}
S_{\alpha,\beta}^{+}&=\mathcal{N}_{\alpha,\beta}^++\frac{2}{q}\sum_{\substack{a,b\\m,n}}\frac{\alpha_{a}\beta_{b}}{\sqrt{ab}m^{1/2+\alpha}n^{1/2+\beta}}V_+\Big(\frac{\pi mn}{q}\Big)+O_\eps\big(q^{-\delta_0+\eps}\big),
\end{align*}
where $\mathcal{N}_{\alpha,\beta}^+$ is defined as in \eqref{N+}.
\end{proposition}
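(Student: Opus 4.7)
My plan is to dissect $S_{\alpha,\beta}^+$ via a smooth dyadic partition of unity in both summation variables $m$ and $n$, writing $S_{\alpha,\beta}^+ = \sum_{M,N} S_{\alpha,\beta}^+(M,N)$, where $M, N$ run over dyadic parameters and, thanks to the rapid decay of $V_+$ recorded in Remark \ref{rmkV}, we may restrict to $MN \ll q^{1+\eps}$. I would then partition the pairs $(M,N)$ into four disjoint regions, $\mathcal{A}_1, \mathcal{A}_{2,<}, \mathcal{A}_{2,>}, \mathcal{A}_3$, tailored to the hypotheses of Propositions \ref{prop1} and \ref{prop2}: the very small pairs $\mathcal{A}_1$ handled by the trivial bound \eqref{prop1-1}, the unbalanced regions $\mathcal{A}_{2,\gtrless}$ handled by the Poisson/large-sieve argument of Proposition \ref{prop2}, and the ``balanced'' remainder $\mathcal{A}_3$ by the asymptotic formula \eqref{prop1-2}.

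For the error terms, I would insert Proposition \ref{prop1}(i) and Proposition \ref{prop2} into the integral representation \eqref{mainexS}, gaining a power of $q^{-\delta_0}$ in each dyadic block. The serious check is in $\mathcal{A}_3$: here the bound \eqref{bdE1} must be unpacked and maximized over the admissible ranges of $A, B, M, N$ subject to $ABMN \gg q^{2-2\delta_0}$, $N \ll q^{\kappa + 6\delta_0}$, and the failure of the three conditions defining $\mathcal{A}_{2,\gtrless}$. This is a case analysis leading to the constraint $\kappa < 51/101$, which is the heart of the whole argument; I would do it in two sub-cases depending on whether $BM$ is large or $A \gg q^{-2\delta_0}N$, and take the maximum of the resulting exponents. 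After summing the $O(\log q)$ dyadic blocks one is left with the main terms.

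Next I would process the main terms. For $(M,N) \in \mathcal{A}_3$, the contribution $\mathcal{M}_1^+(M,N) + \mathcal{M}_1^-(M,N)$ from Proposition \ref{prop1} is opened up in Mellin transforms of $W$; then the $x$-integral evaluates to Beta-like quotients of gamma functions (via Gradshteyn--Ryzhik 17.43), making the $r$-sum into $\zeta(\alpha+\beta+2s+u+v)$ after shifting the $s$-contour. Two contour shifts are needed, which together pick up three residues: one at $s = 1/2 - \beta - v$, one at $s = 1/2 - (\alpha+\beta+u+v)/2$ from the zeta, and a matching residue from an auxiliary shift. The residues recombine, using \eqref{tildeW} to turn integrals against Mellin data back into sums against $V_+$, producing a clean ``principal-character''-type contribution $\frac{2}{q}\sum_{a,b,m,n} \cdots V_+\bigl(\tfrac{\pi mn}{q}\bigr)$ plus leftover integrals $\mathcal{P}_1''(M,N) + \mathcal{P}_2''(M,N)$. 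For $(M,N) \in \mathcal{A}_{2,\gtrless}$, Proposition \ref{prop2} is expanded in the same spirit, and the $h=0$ frequency gives an identical $\frac{2}{q}\sum \cdots V_+$ term, with the harmonic detection sum $\sum_n \to \int$ costing only $O_\eps(q^{-1+\eps}\sqrt{ABM/N})$ by \eqref{tildeW}.

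Finally I would reassemble the partition of unity. A trivial contour-bound estimate on $\mathcal{P}_j''(M,N)$, exploiting the rapid decay of $H_j$ and of $\widetilde{W}$, shows that extending the sum from $\mathcal{A}_3$ to all dyadic $(M,N)$ costs only $O_\eps(q^{-\delta_0+\eps})$; similarly one may drop the restriction $(M,N) \notin \mathcal{A}_1$ in the $V_+$-sum. Then Lemma \ref{sumunity} of Young collapses the resulting sum $\sum_{M,N} \mathcal{P}_j''(M,N)$ to the single integral defining a function $H(s) = H_1(s) + H_2(s)|_{u=v=0}$; the identity of Lemma 8.2 of \textbf{[Y]} replaces $H(s)$ by a product of gammas matching $X_-$, and the functional equation for $\zeta$ together with $s \mapsto -s$ converts the result into $\mathcal{N}^+_{\alpha,\beta}$ as in \eqref{N+}. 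Collecting the diagonal $V_+$-piece, the secondary term $\mathcal{N}^+_{\alpha,\beta}$, and the cumulative error $O_\eps(q^{-\delta_0+\eps})$ yields Proposition \ref{S+}. I expect the range-of-$\kappa$ bookkeeping in $\mathcal{A}_3$ and the triple contour shift ensuring that all extraneous residues cancel (rather than accumulate) to be the most delicate steps.
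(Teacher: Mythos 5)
Your proposal is correct and follows essentially the same route as the paper's own argument: the dyadic partition into the regions $\mathcal{A}_1$, $\mathcal{A}_{2,\gtrless}$, $\mathcal{A}_3$ with Propositions \ref{prop1} and \ref{prop2} applied as you describe, the error analysis giving $\kappa<51/101$, the contour shifts converting the $r$-sum into $\zeta(\alpha+\beta+2s+u+v)$ with the residues recombining via \eqref{tildeW} into the $\frac{2}{q}\sum V_+$ term, and the reassembly through the bound on $\mathcal{P}_j''(M,N)$, Lemma \ref{sumunity}, Lemma 8.2 of [\textbf{\ref{Y}}], and the functional equation with $s\mapsto-s$. No substantive differences from the paper's proof.
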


\subsection{Combining the main terms and secondary main terms}

Section \ref{initial} and Proposition \ref{S+} imply that
\[
I_{\alpha,\beta}=\Big(\mathcal{M}_{\alpha,\beta}^++\mathcal{N}_{\alpha,\beta}^+\Big)+\Big(\frac{q}{\pi}\Big)^{-(\alpha+\beta)}\Big(\mathcal{M}_{-\beta,-\alpha}^-+\mathcal{N}_{-\beta,-\alpha}^-\Big)+O_\eps\big(q^{-\delta_0+\eps}\big),
\]
where $\mathcal{M}^\pm$, $\mathcal{N}^\pm$ are defined as in \eqref{M+} and \eqref{N+}. We have, for instance,
\begin{align*}
\Big(\frac{q}{\pi}\Big)^{-(\alpha+\beta)}\mathcal{M}_{-\beta,-\alpha}^-+\mathcal{N}_{\alpha,\beta}^+&=\Big(\frac{q}{\pi}\Big)^{-(\alpha+\beta)}\sum_{\substack{da,db\leq q^\kappa\\(a,b)=1}}\frac{\alpha_{da}\beta_{db}}{da^{1-\alpha}b^{1-\beta}}\\
&\qquad\frac{1}{2\pi i}\bigg(\int_{(\eps)}-\int_{(-\eps)}\bigg) X_-(s)\Big(\frac {\pi ab}{ q}\Big)^{-s}\zeta(1-\alpha-\beta+2s)\,\frac{ds}{s}\\
&=\text{Res}_{s=0}\\
&=X_-(0)\Big(\frac{q}{\pi}\Big)^{-(\alpha+\beta)}\zeta(1-\alpha-\beta)\sum_{\substack{da,db\leq q^\kappa\\(a,b)=1}}\frac{\alpha_{da}\beta_{db}}{da^{1-\alpha}b^{1-\beta}}.
\end{align*}
Note that the pole at $s=(\alpha+\beta)/2$ of the zeta-function is cancelled by the function $G$. A similar expression holds for the combination of the other two terms, and Theorem \ref{mthm} follows.

\section{Proofs of Theorems \ref{thm2} and \ref{thm3}}\label{sthm2}

\subsection{Proof of Theorem \ref{thm2}}

We argue the same as in the proof of Theorem \ref{mthm}. The only difference is that we also apply Proposition \ref{prop3} to \eqref{mainexS}: if $(M,N)\in\mathcal{A}_1$ we apply \eqref{prop1-1} of Proposition \ref{prop1}; if $(M,N)\in\mathcal{A}_{2,<}\cup \mathcal{A}_{2,>}$ we apply Proposition \ref{prop2}; and in the remaining case we apply Proposition \ref{prop3}. So it remains to check that the error term $\mathcal{E}(M,N)$ is acceptable when $(M,N)\in\mathcal{A}_3$.

As in Subsection \ref{errorterm}, if $(M,N)\in\mathcal{A}_3$, then we may assume that $M, N\ll q^{\kappa+6\delta_0}$. Without loss of generality, let us assume that $AM\ll BN$. Then from Proposition \ref{prop3}, 
\begin{align*}
\mathcal{E}(M,N)&\ll_\eps q^{-1+\eps}A^{1/4}B^{3/2}(N/M)^{1/2}+q^{-3/4+\eps}A^{1/4}B(N/M)^{1/4}(A_1+A_2)^{1/4}.
\end{align*}
Since $M\gg q^{2-2\delta_0}/ABN$, it follows that $N/M\ll q^{-2+2\delta_0}ABN^2\ll q^{-2+4\kappa+14\delta_0}$, and hence
\begin{align*}
\mathcal{E}(M,N)&\ll_\eps q^{-2+15\kappa/4+7\delta_0+\eps}+q^{-5/4+9\kappa/4+7\delta_0/2+\eps}(A_1+A_2)^{1/4}.
\end{align*}
Thus $\mathcal{E}(M,N)\ll_\eps q^{-\delta_0+\eps}$ for some $\delta_0>0$ if $9\kappa+\max\{\kappa_1,\kappa_2\}<5$.

\subsection{Proof of Theorem \ref{thm3}}

Again we follow the arguments in the proof of Theorem \ref{mthm}. The only difference is that if $(M,N)\in\mathcal{A}_3$, then we apply Proposition \ref{prop4} to \eqref{mainexS}. We only need to verify that the error term $\mathcal{E}(M,N)$ is negligible in this case.

As above, we may assume that $M, N\ll q^{\kappa+6\delta_0}$, and, without loss of generality, that $AM\ll BN$. In view of Proposition \ref{prop4},
\begin{align*}
\mathcal{E}(M,N)\ll_\eps&\, q^{\vartheta-1/2+\eps}(BN/AM)^{3/4}\Big(A_2B_2+\frac{B^2N}{qM}\Big)^{1/2}\Big(A_2B_2+\frac{B}{A_1}\Big)^{1/2}
\end{align*}
with $\vartheta=7/64$. Since $M\gg q^{2-2\delta_0}/ABN$, we get
\begin{align*}
\mathcal{E}(M,N)&\ll_\eps\, q^{\vartheta-2+3\kappa/2+21\delta_0/2+\eps}B^{3/2}\big(A_2B_2+q^{-3+2\kappa+14\delta_0}AB^3\big)^{1/2}\Big(A_2B_2+\frac{B}{A_1}\Big)^{1/2}.
\end{align*}
As $MN\ll q^{1+\eps}$, we have $AB\gg q^{1-2\delta_0-\eps}$, and so $B/A_1\ll q^{-1+2\delta_0+\eps}A_2B^2$. Hence
\begin{align*}
\mathcal{E}(M,N)&\ll_\eps\, q^{\vartheta-2+3\kappa+2\kappa_2+21\delta_0/2+\eps}+q^{\vartheta-5/2+4\kappa+3\kappa_2/2+23\delta_0/2+\eps}+q^{\vartheta-7/2+6\kappa+\kappa_2+35\delta_0/2+\eps}\\
&\ll_\eps\, q^{\vartheta-1/2+7\kappa_2+35\delta_0/2+\eps}.
\end{align*}
Thus $\mathcal{E}(M,N)\ll_\eps q^{-\delta_0+\eps}$ for some $\delta_0>0$ if $\kappa_2<1/14-\vartheta/7$.

\section{Proof of Theorem \ref{thm: third moment}}

The lower bound for \eqref{eq: third moment} is relatively straightforward, and follows from the work of Rudnick and Soundararajan [\textbf{\ref{RS}}]. We therefore focus on the upper bound. Our approach utilizes a combination of ideas from Heath-Brown [\textbf{\ref{hb}}] and Bettin, Chandee and Radziwi\l\l\ [\textbf{\ref{BCR}}], as well as our Theorem \ref{mthm} on the twisted second moment of Dirichlet $L$-functions. Heath-Brown [\textbf{\ref{hb}}; Theorem 1] previously obtained Theorem \ref{thm: third moment} assuming the Generalized Riemann Hypothesis, and Bettin, Chandee and Radziwi\l\l \ [\textbf{\ref{BCR}}; Corollary 2] obtained the analogue of Theorem \ref{thm: third moment} for the Riemann zeta-function $\zeta(s)$.

Let us define
\begin{align*}
M(q) = \sideset{}{^*}\sum_{\chi(\text{mod}\ q)}   \left|L \left( \tfrac{1}{2},\chi\right) \right|^3,
\end{align*}
so that the upper bound in \eqref{eq: third moment} follows from the estimate
\begin{align*}
M(q) &\ll q (\log q)^{9/4}.
\end{align*}
We follow Heath-Brown (see [\textbf{\ref{hb}}; p. 408--409]) and first obtain an upper bound for $M(q)$ in terms of an integral.

As $L(s,\chi)$ is an analytic function, we have
\begin{align}\label{eq: L subharmonic inequality}
\left|L \left( \tfrac{1}{2},\chi\right) \right|^3 &\leq \frac{1}{2\pi} \int_0^{2\pi } \left|L \left( \tfrac{1}{2}+re^{i\theta},\chi\right) \right|^3 d\theta
\end{align}
for any $r \geq 0$. %Certainly \eqref{eq: L subharmonic inequality} is trivial if $r = 0$, and otherwise it follows from Cauchy's integral formula and the triangle inequality. 
We multiply both sides of \eqref{eq: L subharmonic inequality} by $r$ and integrate from $0$ to $R$, obtaining
\begin{align}\label{eq: upper bound for central value in terms of integral}
\left|L \left( \tfrac{1}{2},\chi\right) \right|^3 &\leq \frac{1}{\text{meas}(D)} \int_D \left|L \left( \tfrac{1}{2}+z,\chi\right) \right|^3 dA,
\end{align}
where $D = \{z : |z| \leq R\}$ and $dA$ denotes the area measure. We choose $R = (\log q)^{-1}$, and then the real part of $1/2+z$ satisfies
\begin{align*}
\frac{1}{2} - \frac{1}{\log q} \leq \text{Re}\Big( \frac{1}{2}+z \Big) \leq \frac{1}{2} + \frac{1}{\log q}.
\end{align*}
Now define a function
\begin{align*}
W_\rho (s)=W(s) = \frac{q^{\rho(s - 1/2)}-1}{(s -1/2)\log q},
\end{align*}
where $\rho > 0$ is a parameter at our disposal (we eventually take $\rho$ to be rather small). For $z \in D$ we have
\begin{align}\label{eq: lower bound for W on D}
\left|W \left( \tfrac{1}{2}+z \right) \right| \geq \frac{\rho}{2},
\end{align}
provided that $\rho \leq 1/2$, say. By positivity, we obtain from \eqref{eq: upper bound for central value in terms of integral} and \eqref{eq: lower bound for W on D} that
\begin{align*}
\left|L \left( \tfrac{1}{2},\chi\right) \right|^3 &\ll_\rho (\log q)^2\int_D \left|W \left( \tfrac{1}{2}+z \right) \right|^6 \left|L \left( \tfrac{1}{2}+z,\chi\right) \right|^3 dA \\
&\leq (\log q)^2 \int_{- 1/\log q}^{1/\log q} \int_{-\infty}^\infty \left|W \left( \tfrac{1}{2}+\gamma + it \right) \right|^6 \left|L \left( \tfrac{1}{2}+\gamma + it,\chi\right) \right|^3 dt \ d\gamma.
\end{align*}

We define
\begin{align*}
J(\gamma) = \sideset{}{^*}\sum_{\chi(\text{mod}\ q)}  \int_{-\infty}^\infty \left|W \left( \tfrac{1}{2}+\gamma + it \right) \right|^6 \left|L \left( \tfrac{1}{2}+\gamma + it,\chi\right) \right|^3 dt.
\end{align*}
We have therefore obtained
\begin{align*}
M(q) &\ll_\rho (\log q)^2 \int_{- 1/\log q}^{1/\log q} J(\gamma) \ d \gamma.
\end{align*}
By [\textbf{\ref{hb}}; Lemma 4] we have
\begin{align*}
J(\gamma) \ll_A\, q(\log q)^{-1} + J\Big(\frac{A}{\log q} \Big),
\end{align*}
where $A \geq 1$ is a parameter at our disposal. Thus,
\begin{align}\label{eq: upper bound for M in terms of J}
M(q) &\ll_{\rho,A} \, q + (\log q)J\Big(\frac{A}{\log q} \Big).
\end{align}
We shall eventually take $A$ to be a large, but fixed, constant.

In order to apply our main theorem, we need to truncate the integral in the definition of $J(\gamma)$ to $|t| \leq T$, for some relatively small $T$. This can be done easily because of the decay of $W$ in vertical strips. For $|t| \geq 1$ we have
\begin{align*}
\left|W \Big( \frac{1}{2}+\frac{A}{\log q} + it \Big) \right|^6 \ll_{\rho,A} \frac{1}{t^6},
\end{align*}
so that
\begin{align*}
&\sideset{}{^*}\sum_{\chi(\text{mod}\ q)} \int_{|t| \geq T} \left|W \Big( \frac{1}{2}+\frac{A}{\log q} + it \Big) \right|^6 \left|L \Big( \frac{1}{2}+\frac{A}{\log q} + it,\chi\Big) \right|^3 dt \\
&\qquad\qquad\ll_{\rho,A} \ \sideset{}{^*}\sum_{\chi(\text{mod}\ q)} \int_{|t| \geq T}   \left|L \Big( \frac{1}{2}+\frac{A}{\log q} + it,\chi\Big) \right|^3\frac{dt}{t^6} .
\end{align*}
We break the integral into dyadic segments, so we must estimate
\begin{align*}
&\sideset{}{^*}\sum_{\chi(\text{mod}\ q)} \int_{U \leq |t| \leq 2U} \left|L \Big( \frac{1}{2}+\frac{A}{\log q} + it,\chi\Big) \right|^3\frac{dt}{t^6}\\
&\qquad\qquad \ll \frac{1}{U^6} \sideset{}{^*}\sum_{\chi(\text{mod}\ q)}  \int_{-2U}^{2U}\left|L \Big( \frac{1}{2}+\frac{A}{\log q} + it,\chi\Big) \right|^3 dt
\end{align*}
for $U \geq T$. By H\"older's inequality and Theorem 10.1 of [\textbf{\ref{mont}}], we find that this latter quantity is
\begin{align*}
\ll \frac{(qU)^{1/4}}{U^6} \left( \ \sideset{}{^*}\sum_{\chi(\text{mod}\ q)}  \int_{-2U}^{2U} \left|L \Big( \frac{1}{2}+\frac{A}{\log q} + it,\chi\Big) \right|^4 dt \right)^{3/4}\ll \frac{q (\log qU)^3}{ U^{5}}.
\end{align*}
Summing over $U=2^j$ with $U \geq T=\log q$, we find that
\begin{align}\label{eq: J and J1}
J\Big(\frac{A}{\log q}\Big) &= \ J_1\Big(  \frac{A}{\log q}\Big) + O_{\rho,A}\big(q(\log q)^{-1}\big),
\end{align}
where
\begin{align}\label{fmlJ1}
&J_1(\gamma) = \sideset{}{^*}\sum_{\chi(\text{mod}\ q)}  \int_{-\log q}^{\log q} \left|W \left(\tfrac12+ \gamma + it \right) \right|^6\left|L \left( \tfrac12+\gamma + it,\chi\right) \right|^3dt.
\end{align}

We next require an upper bound for $L(1/2+A/\log q+it,\chi)$. Here we mostly follow the arguments of [\textbf{\ref{BCR}}; Section 6]. 
\begin{lemma}\label{lem: BCR lemma 2 analog}
Let $G$ be a compactly supported function. Suppose that $F(u) = -G'(u)$ for $u > 0$ and $F$ is three times continuously differentiable and compactly supported. 
Then
\begin{align*}
\sum_n \frac{\chi(n)}{n^s} G \Big(\frac{\log n}{\log x} \Big) = \frac{1}{2\pi i}\int_{(c)} L (s+w,\chi) \widehat{F} \Big(\frac{-iw \log x}{2\pi} \Big) \frac{dw}{w},
\end{align*}
where $c > \max\{1-\emph{Re}(s),0\}$, $x > 1$ and $\widehat{F}$ denotes the Fourier transform of $F$.
\end{lemma}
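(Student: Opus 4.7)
The argument is a standard contour-integration manipulation: expand $L$ as a Dirichlet series, unfold $\widehat{F}$ as an integral, interchange everything, and apply Perron's formula. Starting from the right-hand side, on the line $\text{Re}(w) = c$ the hypothesis $c > \max\{1-\text{Re}(s),0\}$ ensures $\text{Re}(s+w) > 1$, so $L(s+w,\chi) = \sum_{n \geq 1} \chi(n) n^{-s-w}$ converges absolutely. Because $F$ is $C^3$ with compact support, repeated integration by parts gives $\widehat{F}(\xi) \ll (1+|\xi|)^{-3}$, so $\widehat{F}(-iw \log x/(2\pi))$ decays cubically in $|\text{Im}(w)|$ along the contour, and the $w$-integral converges absolutely.

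Writing $\widehat{F}(-iw \log x/(2\pi)) = \int F(u)\, x^{uw}\, du$ (with the appropriate Fourier sign convention) and interchanging the sum over $n$ with the $u$- and $w$-integrals — justified by the two absolute convergence statements above together with the compact support of $F$ — recasts the right-hand side as
\[
\sum_{n \geq 1} \frac{\chi(n)}{n^s} \int F(u) \left( \frac{1}{2\pi i} \int_{(c)} \left(\frac{n}{x^u}\right)^{-w} \frac{dw}{w}\right) du.
\]
Standard Perron evaluation $\tfrac{1}{2\pi i} \int_{(c)} y^{-w}\,dw/w = \mathbf{1}_{y<1}$ (valid for $c > 0$ and $y \neq 1$) reduces the inner integral to the indicator $\mathbf{1}_{u > \log n/\log x}$, the boundary $n = x^{u}$ being a measure-zero set in $u$ that does not affect the outer integration.

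Thus the $u$-integration collapses to $\int_{\log n/\log x}^{\infty} F(u)\,du$. Since $G$ is compactly supported and $F(u) = -G'(u)$ on $(0,\infty)$, the fundamental theorem of calculus yields $\int_y^\infty F(u)\,du = G(y) - G(\infty) = G(y)$ for every $y \geq 0$, including the boundary value $y = 0$ that corresponds to $n = 1$. This reproduces $\sum_n \chi(n) n^{-s} G(\log n/\log x)$, which is the left-hand side.

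The hard part here is really only bookkeeping: fixing the Fourier sign convention so that the Perron step yields the half-line of $u$ over which $F$ integrates to $G(\log n/\log x)$, and verifying the absolute convergence required to swap the sum with both integrals. Both are immediate from the $C^3$ and compact-support hypotheses on $F$, so there is no genuine analytic obstacle.
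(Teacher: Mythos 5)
Your proposal is correct and is essentially the argument the paper has in mind: the paper's proof simply says ``argue as in [BCR, Lemma 2]'', and that argument is exactly your unfold-the-Dirichlet-series, unfold $\widehat{F}$, apply Perron, and recover $G$ by the fundamental theorem of calculus computation (noting, as you do, that for complex argument with bounded imaginary part the compact support of $F$ still gives the cubic decay of $\widehat{F}$ after integrating by parts). The only point to state more carefully is the final interchange of the $u$- and $w$-integrals: once $\widehat{F}$ is unfolded the $1/w$ factor destroys absolute convergence, so that swap should be justified via the truncated Perron formula (or by first moving the $n$-sum inside, which is absolutely justified, and then treating the inner $w$-integral as a limit of truncations with dominated convergence) --- a routine repair that does not affect the conclusion.
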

\begin{proof}
Argue as in [\textbf{\ref{BCR}}; Lemma 2].
\end{proof}

Let $\delta >0$ be a small positive parameter to be chosen later. We introduce another parameter $\theta$ which satisfies $\delta < \theta < 1$. We define
\begin{align*}
\widehat{F}(z) &= e^{2\pi i (\theta - \delta)z}\bigg(\frac{e^{2\pi i (1-\theta)z}-1}{2\pi i (1-\theta)z}\bigg)^N,
\end{align*}
where $N \geq 10$ is a bounded integer (actually, $N=10$ suffices). Then $F$ is compactly supported on $[\theta - \delta, \theta - \delta + (1-\theta)N]$. For $u > 0$, we define
\begin{align*}
G(u) &= 1 - \int_0^u F(v) dv,
\end{align*}
and $G(u) = 0$ for $u \leq -1$. We let $G$ decay smoothly to zero on the interval $[-1,0]$. Thus $F(u) = -G'(u)$ for all $u >0$. We have $G(u) = 1$ for $0 < u < \theta - \delta$, and $G(u) = 0$ for $u > \theta - \delta + (1-\theta) N$. Lastly, $G$ is $N$ times differentiable, and therefore $\widehat{G}(u) \ll (1+|u|)^{-N}$.

We now let $x \leq q^{1/2 + 1/300}$, and choose our parameters $\delta$, $\theta$ so that $\delta = 2(N-1)(1 - \theta)$, and $\theta = (\log y)/(\log x)$ with $y = q^{1/2+2\delta}$. Note that we want $\delta$ to be small enough so that $x \leq q^{1/2 + 1/300}$. We remark that we shall eventually choose our parameter $\rho$ to be sufficiently small in terms of $\delta$.

With such choice of $\delta$ and $\theta$ we have
\begin{align*}
\widehat{F} \bigg(\frac{-iw \log x}{2\pi} \bigg) &= (yx^{-\delta})^w \bigg(\frac{(x/y)^w-1}{w(1-\theta)(\log x)} \bigg)^N.
\end{align*}
Using Lemma \ref{lem: BCR lemma 2 analog} with $s = \sigma + it$ and $\sigma = 1/2 + A/\log q$, we shift the line of integration to $\text{Re}(w) =1/2- \sigma$, thereby obtaining
\begin{align}\label{eq: upper bound for L}
|L(s,\chi)| &\leq \bigg|\sum_n \frac{\chi(n)}{n^s} G \Big(\frac{\log n}{\log x}\Big) \bigg| + (4N)^N \frac{(yx^{-\delta})^{1/2-\sigma}}{(\delta \log x)^N}\int_{-\infty}^\infty \frac{\left|L \left(\frac{1}{2}+it+iv,\chi \right) \right|}{\big((\sigma -1/2)^2 + v^2\big)^{(N+1)/2}}dv.
\end{align}

Let
\begin{align*}
c(n) &= \sum_{\substack{n=ab\\ a,b \leq x}} d_{1/2}(a) d_{1/2}(b),
\end{align*}
where $d_{1/2}(n)$ are the Dirichlet coefficients of $\zeta(s)^{1/2}$. Then $c(n) = 1$ for $n \leq x$. Since $G (u) = 0$ for $u > 1$, we obtain by Fourier inversion
\begin{align}\label{idG}
\sum_n \frac{\chi(n)}{n^s} G \Big(\frac{\log n}{\log x}\Big) &= \sum_n \frac{c(n)\chi(n)}{n^s} G \Big(\frac{\log n}{\log x}\Big)\nonumber\\
&= \frac{\log x}{2\pi}\int_{-\infty}^{\infty}\bigg(\sum_{a \leq x} \frac{d_{1/2}(a)\chi(a)}{a^{s+iv}} \bigg)^2 \widehat{G}\Big( \frac{v \log x}{2\pi}\Big) dv.
\end{align}
We then multiply both sides of \eqref{eq: upper bound for L} by $|W(s)|^6|L(s,\chi)|^2$, integrate over $t$, sum on $\chi$, apply \eqref{fmlJ1} and \eqref{idG} to obtain
\begin{align}\label{eq: upper bound for J with cal E}
J_1 \Big( \frac{A}{\log q}\Big) \leq \mathcal{M} + \mathcal{E},
\end{align}
where
\begin{align*}
\mathcal{M}=&\,\frac{\log x}{2\pi}\int_{-\log q}^{\log q} \int_{-\infty}^\infty \bigg|\widehat{G}\Big( \frac{v \log x}{2\pi}\Big)\bigg|\bigg|W \Big(\frac{1}{2} + \frac{A}{\log q}+it\Big) \bigg|^6\\
&\qquad\qquad\sideset{}{^*}\sum_{\chi(\text{mod}\ q)}  \bigg|L \Big(\frac{1}{2} + \frac{A}{\log q}+it,\chi\Big) \bigg|^2  \bigg|\sum_{a \leq x} \frac{d_{1/2}(a) \chi(a)}{a^{1/2+A/\log q+i(t+v)}} \bigg|^2dvdt
\end{align*}
and
\begin{align*}
\mathcal{E} =&\, (4N)^N \frac{(yx^{-\delta})^{-A/\log q}}{(\delta \log x)^N}\int_{-\infty}^{\infty} \int_{-\infty}^\infty  \frac{\big| W (1/2 + A/\log q + it ) \big|^6}{\big(A^2/(\log q)^2 + v^2\big)^{(N+1)/2}} \\
& \qquad\qquad\sideset{}{^*}\sum_{\chi(\text{mod}\ q)} \bigg|L \Big(\frac{1}{2} + \frac{A}{\log q}+it,\chi\Big) \bigg|^2 \bigg| L \Big( \frac{1}{2}+it+iv,\chi \Big) \bigg| dv dt.
\end{align*}
Observe that we have used positivity to extend the $t$-integral in $\mathcal{E}$ to all of $\mathbb{R}$. Our goal now is to show that
\begin{align}\label{bdonM}
\mathcal{M} \ll_{\rho,A,\delta} q (\log q)^{5/4}
\end{align}
and
\begin{align}\label{eq: cal E is smaller than J}
\mathcal{E} \leq \frac{1}{2}J \Big( \frac{A}{\log q}\Big).
\end{align}
Then \eqref{eq: J and J1}, \eqref{eq: upper bound for J with cal E}, \eqref{bdonM} and \eqref{eq: cal E is smaller than J} together give
\begin{align*}
J_1 \Big( \frac{A}{\log q}\Big)  &\ll_{\rho,A,\delta} q (\log q)^{5/4},
\end{align*}
and comparison with \eqref{eq: upper bound for M in terms of J} and \eqref{eq: J and J1} leads to
\begin{align*}
M(q) \ll_{\rho, A,\delta} q(\log q)^{9/4},
\end{align*}
as desired.

The bound \eqref{bdonM} follows from the observation that
\begin{align*}
\int_{-\log q}^{\log q} \left|W \left( \frac{1}{2} + \frac{A}{\log q} + it \right) \right|^6 dt \ll_{\rho,A} (\log q)^{-1}
\end{align*}
and the following lemma.

\begin{lemma}\label{lemmathird}
Let $t,v \in \mathbb{R}$ with $|t|\leq \log q$, and $s = 1/2 + A/\log q+it$. Then
\begin{align}\label{eq: upper bound on 3rd moment main term}
\sideset{}{^*}\sum_{\chi(\emph{mod}\ q)} \left|L \left( s,\chi\right) \right|^2 \bigg|\sum_{a \leq x} \frac{d_{1/2}(a) \chi(a)}{a^{s+iv}} \bigg|^2 \ll_{A} q(\log q)^{9/4},
\end{align}
and the implied constant is independent of $t$ and $v$. 
\end{lemma}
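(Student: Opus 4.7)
The plan is to apply Theorem \ref{mthm}, together with its analog for odd primitive characters (noted in the paper just before the statement of Theorem \ref{mthm}), choosing shifts so that the product of $L$-functions becomes $|L(s,\chi)|^2$. Set
\[
\alpha = \frac{A}{\log q} + it, \qquad \beta = \frac{A}{\log q} - it,
\]
and take $\alpha_a = d_{1/2}(a)\, a^{-A/\log q - it - iv}\, \mathds{1}_{a \leq x}$, which satisfies $|\alpha_a| \leq d_{1/2}(a) \ll_\varepsilon a^\varepsilon$ uniformly in $t, v$. Since $x \leq q^{1/2+1/300}$, we may take $\kappa = 1/2+1/300 < 1/2+1/202$, and the hypotheses $|\textup{Re}(\alpha)|,|\textup{Re}(\beta)| \ll (\log q)^{-1}$ and $|\textup{Im}(\alpha)|,|\textup{Im}(\beta)|=|t|\leq \log q$ all hold. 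Summing over both parities of primitive characters recovers the left-hand side of \eqref{eq: upper bound on 3rd moment main term}, up to the normalizing factor $\varphi^+(q) \sim q/2$.

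The error term of size $O(q^{-\delta_0})$, after multiplication by $\varphi^+(q)$, contributes $O(q^{1-\delta_0})$, which is negligible. The prefactors of the two main terms are controlled uniformly in $t$ and $v$: the Laurent expansion $\zeta(1+z) = z^{-1}+O(1)$ gives $|\zeta(1 \pm (\alpha+\beta))| \ll_A \log q$; we have $(q/\pi)^{-(\alpha+\beta)} = q^{-2A/\log q} \ll_A 1$; and the Gamma-factor ratio is $O(1)$ uniformly for $|t| \leq \log q$ by Stirling, since the exponential decay in $t$ cancels between numerator and denominator, as do the polynomial factors. The double sums in the main terms, after the substitution $n_1 = da$, $n_2 = db$ with $(a,b) = 1$ (so $d = (n_1,n_2)$) and the trivial estimate $|\alpha_n| \leq d_{1/2}(n)$, are both majorized by
\[
\Sigma := \sum_{n_1, n_2 \leq x} \frac{d_{1/2}(n_1)\, d_{1/2}(n_2)\, (n_1, n_2)}{n_1\, n_2}.
\]

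The final step is the standard Dirichlet-series estimate $\Sigma \ll (\log x)^{5/4}$, which follows from the factorization
\[
\sum_{n_1, n_2 \geq 1} \frac{d_{1/2}(n_1) d_{1/2}(n_2) (n_1, n_2)}{n_1^{s_1} n_2^{s_2}} = \zeta(s_1)^{1/2}\, \zeta(s_2)^{1/2}\, \zeta(s_1+s_2-1)^{1/4}\, H(s_1, s_2),
\]
with $H$ given by an absolutely convergent Euler product in a neighborhood of $s_1=s_2=1$. One verifies this prime by prime: the local factor at $p$ on the left equals $1 + \tfrac12 p^{-s_1} + \tfrac12 p^{-s_2} + \tfrac14 p^{1-s_1-s_2} + O(p^{-2\min(s_1,s_2)})$, matching the expansion of the right-hand side. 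A standard contour shift via Perron's formula (or the Selberg--Delange method) then converts the pole of total order $5/4$ at $s_1 = s_2 = 1$ into the bound $\Sigma \ll (\log x)^{5/4} \ll (\log q)^{5/4}$. Combining the $\log q$ factor from the zeta-prefactor with the $(\log q)^{5/4}$ bound for $\Sigma$ yields $I_{\alpha,\beta} \ll_A (\log q)^{9/4}$; multiplying by $\varphi^+(q)$ and adding the odd-character contribution produces the claimed bound. The main obstacle is thus the $\Sigma$-estimate, which is routine multiplicative-function analysis but requires care in handling the gcd weighting.
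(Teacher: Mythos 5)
Your proposal is correct and follows essentially the same route as the paper: apply Theorem \ref{mthm} with $\alpha=A/\log q+it$, $\beta=A/\log q-it$ and $\alpha_a=d_{1/2}(a)a^{-A/\log q-i(t+v)}$, note the prefactors contribute $O_A(\log q)$ uniformly for $|t|\le\log q$, and reduce to the gcd-weighted sum $\sum_{a,b\le x}d_{1/2}(a)d_{1/2}(b)/[a,b]\ll(\log q)^{5/4}$. The only difference is in this last routine step, where the paper exploits $d_{1/2}(n)\ge 0$ to majorize the truncated sum directly by the Euler product $\prod_{p\le x}\bigl(1+\tfrac{5}{4p}+O(p^{-2})\bigr)\ll(\log x)^{5/4}$, which is lighter than your two-variable Dirichlet-series factorization with a Perron/Selberg--Delange contour argument, though the latter also works.
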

\begin{proof}
Applying Theorem \ref{mthm} with 
\[
\alpha=\frac{A}{\log q}+it,\qquad \beta=\frac{A}{\log q}-it\qquad\text{and}\qquad \alpha_a=\frac{d_{1/2}(a)}{a^{A/\log q+i(t+v)}},
\]
the quantity on the left side of \eqref{eq: upper bound on 3rd moment main term} is 
\begin{align*}
\ll_A q (\log q) \sum_{a,b \leq x} \frac{d_{1/2}(a)d_{1/2}(b)}{[a,b]}+O_\eps\big(q^{1-\eps}\big).
\end{align*}
Note we have used the fact that $d_{1/2}(n) \geq 0$. By an Euler product computation we find
\begin{align*}
\sum_{a,b \leq x} \frac{d_{1/2}(a)d_{1/2}(b)}{[a,b]} &\leq \prod_{p \leq x} \bigg( \sum_{i,j \geq 0} \frac{d_{1/2}(p^i)d_{1/2}(p^j)}{[p^i,p^j]} \bigg) = \prod_{p \leq x} \bigg( 1 + \frac{5}{4p} + O\Big( \frac{1}{p^2}\Big) \bigg) \\
&\ll (\log x)^{5/4} \ll (\log q)^{5/4},
\end{align*}
and the proof is complete.
\end{proof}

We now proceed to show \eqref{eq: cal E is smaller than J}. For notational simplicity we write
\begin{align*}
\mathcal{F} &= (4N)^N \frac{(yx^{-\delta})^{-A/\log q}}{(\delta \log x)^N}.
\end{align*}
We make the change of variables $z = t+v$, obtaining
\begin{align*}
\mathcal{E} = &\,\mathcal{F}\int_{-\infty}^{\infty} \int_{-\infty}^\infty \frac{\big| W (1/2 + A/\log q + it ) \big|^6}{\big(A^2/(\log q)^2 + (z-t)^2\big)^{(N+1)/2}} \\
&\qquad\qquad \sideset{}{^*}\sum_{\chi(\text{mod}\ q)} \bigg|L \Big(\frac{1}{2} + \frac{A}{\log q}+it,\chi\Big) \bigg|^2 \bigg|L \Big(\frac{1}{2} + iz,\chi\Big) \bigg| dz dt.
\end{align*}
By H\"older's inequality we obtain
\begin{align}\label{eq: bound for cal E}
\mathcal{E} &\leq \mathcal{F} \mathcal{E}_1^{2/3} \mathcal{E}_2^{1/3},
\end{align}
where
\begin{align*}
\mathcal{E}_1 =&\, \sideset{}{^*}\sum_{\chi(\text{mod}\ q)} \int_{-\infty}^{\infty}\bigg|W \Big(\frac{1}{2} + \frac{A}{\log q}+it\Big) \bigg|^6\bigg|L \Big(\frac{1}{2} + \frac{A}{\log q}+it,\chi\Big) \bigg|^3\\
&\qquad\qquad \int_{-\infty}^\infty \frac{dz}{\big(A^2/(\log q)^2 + (z-t)^2\big)^{(N+1)/2}}\,dt,
\end{align*}
and
\begin{align*}
\mathcal{E}_2 &=\sideset{}{^*}\sum_{\chi(\text{mod}\ q)}  \int_{-\infty}^\infty\left|L (\tfrac{1}{2} + iz,\chi) \right|^3 \int_{-\infty}^{\infty} \frac{\big| W (1/2 + A/\log q + it ) \big|^6}{\big(A^2/(\log q)^2 + (z-t)^2\big)^{(N+1)/2}}\,dtdz .
\end{align*}

It is easy to bound $\mathcal{E}_1$. By changing variables we obtain
\begin{align*}
\int_{-\infty}^\infty \frac{dz}{\big(A^2/(\log q)^2 + (z-t)^2\big)^{(N+1)/2}}&= \Big( \frac{\log q}{A}\Big)^N \int_{-\infty}^\infty \frac{du}{(1+u^2)^{(N+1)/2}} \leq \Big( \frac{\log q}{A}\Big)^N,
\end{align*}
and therefore
\begin{align}\label{eq: upper bound for cal E 1}
\mathcal{E}_1 &\leq\Big( \frac{\log q}{A}\Big)^N J \Big(\frac{A}{\log q}\Big).
\end{align}

Let us turn to $\mathcal{E}_2$. A change of variables yields
\begin{align*}
&\int_{-\infty}^\infty\frac{\big| W (1/2 + A/\log q + it ) \big|^6}{\big(A^2/(\log q)^2 + (z-t)^2\big)^{(N+1)/2}}\,dt\\
&\qquad\qquad= \Big( \frac{\log q}{A}\Big)^N \int_{-\infty}^\infty \frac{\left|W \left(1/2 + A/\log q + iz + iAu/\log q \right) \right|^6}{(1+u^2)^{(N+1)/2}}\,du.
\end{align*}
We wish to replace the argument in $W$ by $1/2 + A/\log q + iz$. Thus, we examine the quotient
\begin{align*}
&\frac{\left|W \left(1/2 + A/\log q + iz + iAu/\log q \right) \right|}{\left|W \left(1/2 + A/\log q + iz \right) \right|} \\
&\qquad\qquad= \left|\frac{q^{\rho(A/\log q + iz + iAu/\log q)}-1}{q^{\rho(A/\log q + iz)}-1} \right|  \left|\frac{A/\log q + iz}{A/\log q + iz + iAu/\log q} \right|.
\end{align*}
If $A \geq A_0(\rho)$, then
\begin{align*}
\left|\frac{q^{\rho(A/\log q + iz + iAu/\log q)}-1}{q^{\rho(A/\log q + iz)}-1} \right| \ll 1.
\end{align*}
Also, by considering the two cases $|Au/\log q - z| \geq |z|/3$ and $|Au/\log q - z| \leq |z|/3$, say, we find that
\begin{align*}
 \left|\frac{A/\log q + iz}{A/\log q + iz + iAu/\log q} \right|\ll 1 + |u|,
\end{align*}
and hence
\begin{align*}
&\int_{-\infty}^\infty \frac{\left|W \left(1/2 + A/\log q + iz + iAu/\log q \right) \right|^6}{(1+u^2)^{(N+1)/2}}\,du \\
&\qquad\qquad\ll \bigg|W \Big(\frac{1}{2} + \frac{A}{\log q}+iz\Big) \bigg|^6 \int_{-\infty}^\infty \frac{(1+|u|)^6}{(1+u^2)^{(N+1)/2}}du\ll \bigg|W \Big(\frac{1}{2} + \frac{A}{\log q}+iz\Big) \bigg|^6.
\end{align*}
We have therefore obtained
\begin{align}\label{eq: intermed bound for cal E 2}
\mathcal{E}_2 &\ll  \Big( \frac{\log q}{A}\Big)^N\sideset{}{^*}\sum_{\chi(\text{mod}\ q)} \int_{-\infty}^{\infty} \bigg|W \Big(\frac{1}{2} + \frac{A}{\log q}+iz\Big) \bigg|^6 \bigg|L \Big(\frac{1}{2} + iz,\chi\Big) \bigg|^3 dz.
\end{align}

The right side of \eqref{eq: intermed bound for cal E 2} is similar to $J\left(0 \right)$, but the argument of $W$ is perturbed. If we had precisely $J( 0)$, we could apply [\textbf{\ref{hb}}; Lemma 4] to relate $J(0)$ to $J(A/\log q)$. We claim the bounds
\begin{align}\label{eq: use convexity}
\int_{-\infty}^\infty &\left|W \Big(\frac{1}{2} + \frac{A}{\log q} + iz\Big) \right|^6 \sideset{}{^*}\sum_{\chi (\text{mod }q)} \left|L \Big(\frac{1}{2} + iz,\chi\Big) \right|^3 dz \nonumber \\ 
&\ll e^{3A/2 + O(\rho A)} \int_{-\infty}^\infty \left|W \Big(\frac{1}{2} + \frac{2A}{\log q} + iz\Big) \right|^6 \sideset{}{^*}\sum_{\chi (\text{mod }q)} \left|L \Big(\frac{1}{2} + \frac{A}{\log q} + iz,\chi\Big) \right|^3 dz \nonumber \\
&\leq e^{3A/2 + O(\rho A)} J\Big(\frac{A}{\log q} \Big). 
\end{align}
The proof is very similar to the proof of [\textbf{\ref{hb}}; Lemma 4], so we do not give it. The only real difference is that we must do more careful bookkeeping with the constants. To prove \eqref{eq: use convexity} we also use the bounds
\begin{align*}
\left|W \Big(\frac{1}{2} + \frac{2A}{\log q} + iz\Big) \right| &\ll e^{O(\rho A)} \left|W \Big(\frac{1}{2} + \frac{A}{\log q} + iz\Big) \right|, \\
\left|W \Big(\frac{1}{2} + iz\Big) \right| &\ll e^{O(\rho A)} \left|W \Big(\frac{1}{2} + \frac{A}{\log q} + iz\Big) \right|,
\end{align*}
which hold for $A \geq A_0(\rho)$. By \eqref{eq: intermed bound for cal E 2} and \eqref{eq: use convexity} we therefore obtain
\begin{align}\label{eq: bound for cal E 2}
\mathcal{E}_2 &\ll  e^{3A/2 + O(\rho A)}\Big( \frac{\log q}{A}\Big)^N J \Big(\frac{A}{\log q} \Big).
\end{align}

We compare \eqref{eq: bound for cal E}, \eqref{eq: upper bound for cal E 1} and \eqref{eq: bound for cal E 2} and find
\begin{align*}
\mathcal{E} &\ll  \mathcal{F} e^{A/2+ O(\rho A)}\Big( \frac{\log q}{A}\Big)^N J \Big(\frac{A}{\log q} \Big).
\end{align*}
Recalling our definitions, we have
\begin{align*}
\mathcal{E} &\leq C e^{-(1+O(\rho \delta^{-1}))\delta A}\Big(\frac{8N}{\delta A} \Big)^N   J \Big(\frac{A}{\log q} \Big)
\end{align*}
for some absolute constant $C > 0$. We obtain \eqref{eq: cal E is smaller than J} by choosing $\rho$ to be sufficiently small in terms of $\delta$, and then choosing $A$ to be sufficiently large in terms of $\rho$ and $\delta$.

\section*{Acknowledgments}

The second author was supported by NSF grant DMS-1501982. The authors would like to thank Sandro Bettin and Maksym Radziwi\l\l\, for various helpful comments.

\end{document}